\documentclass[11pt]{article}
\usepackage{amsmath, amssymb, amsfonts, amsthm, authblk, color, graphicx, enumitem, mathrsfs, indentfirst}
\usepackage{hyperref, cleveref}
\hypersetup{colorlinks=true, linkcolor=blue, filecolor=magenta, urlcolor=cyan}
\usepackage[textwidth=7in,textheight=9in]{geometry}
\usepackage{float}

\allowdisplaybreaks

\newtheorem{theorem}{Theorem}

\newtheorem{lemma}{Lemma}[section]

\newtheorem{remark}{Remark}[section]

\newtheorem{corollary}{Corollary}[section]
\newtheorem{example}{Example}

\numberwithin{equation}{section}

\newcommand{\keywords}[1]{\small\textbf{\textit{Keywords---}}#1}

\title{Some new numerical schemes for delay sub-diffusion equations via the idea of symmetry}

\author[2]{Dakang Cen}
\author[1]{Caixia Ou\footnote{Corresponding author: oucaixiaywxk@163.com.}}
\author[3]{Seakweng Vong}
\author[4]{Zhibo Wang}
\affil[1]{College of Mathematics and Informatics, South China Agricultural University, Guangzhou 510642, China.}
\affil[2]{Department of Mathematics, Southern University of Science and Technology, Shenzhen, China.}
\affil[3]{Department of Mathematics, University of Macau, Macao, China.}
\affil[4]{School of Mathematics and Statistics, Guangdong University of Technology, Guangdong, Guangzhou 510006, China.}
\date{}
\begin{document}
\maketitle

\abstract{%This work introduces a symmetric fractional-order reduction (SFOR) method to construct robust numerical algorithms for fractional delayed sub-diffusion equations.
In this work, we propose several novel numerical schemes that synergistically integrates the $L1$ method and symmetric fractional-order reduction (SFOR) method, specifically designed for fractional sub-diffusion equations with time delay. Crucially, we rigorously establish an optimal convergence order of $2 - \alpha/2$ in temporal direction, which improves upon the classical rate of $2-\alpha$ under identical smoothness conditions. To further enhance computational efficiency, we develop a weighted alternating direction implicit (ADI) scheme for the two-dimensional case and analyze its coupling with compact spatial discretization---thereby achieving high-order accuracy in space. Moreover,  we introduce a new framework for analyzing the error of $L1$-type discretization on graded temporal meshes, which relaxes the mesh grading requirement and still guarantees optimal convergence rate at continuous time points. Some numerical experiments comprehensively validate both the theoretical accuracy and computational efficiency of the resulting schemes.}

\keywords{Sub-diffusion equations with time delay, Multiple singularities, Nonuniform meshes, Symmetric fractional-order reduction method, Error analysis}

\textbf{MSC2020:} 65M06, 65M12, 35R11

\section{Introduction}
Assuming that $\alpha\in(0,1)$ and $\Omega\subset \mathbb{R}^{\mathrm{d}},~\mathrm{d}=1,2$, we consider the following sub-diffusion equation with time delay:
\begin{equation}\label{eq-gov}
\begin{cases}
  \partial_t^\alpha u - \Delta u + u_{-\tau} = f(\textbf{x},t), & (\textbf{x},t) \in \Omega \times (0,K\tau], \\
  u(\textbf{x},t) = a_0(\textbf{x},t), & (\textbf{x},t) \in \bar{\Omega}\times[-\tau,0], \\
  u(\textbf{x},t) = 0, & (\textbf{x},t) \in \partial \Omega \times (0,K\tau],
\end{cases}
\end{equation}
where $\partial\Omega$ is the bound of $\Omega$, $\bar{\Omega}=\Omega\cup\partial\Omega$, $\Delta$ is Laplace operator, $u_{-\tau}=u(t-\tau)$, $\tau>0$ is a constant time delay and the operator $\partial_t^\upsilon$ is known as the Caputo derivative of order $\upsilon$:
\[\partial_t^\upsilon u(t):= (\mathcal{I}^{n-\upsilon}u^{(n)}
)(t)~\mbox{for}~ t>0 ~\mbox{and}~ n-1<\upsilon< n,\]
in which $\mathcal{I}^{\beta}$ represents the Riemann-Liouville fractional integral of order $\beta$:
\[\mathcal{I}^{\beta}u(t):=
 \int_0^t \omega_\beta(t-s)u(s) ds~\mbox{with}~\omega_\beta(t)=\frac{t^{\beta-1}}{\Gamma(\beta)}, ~\beta>0.
\]

 In the real world, time delays are prevalent and have been incorporated into various mathematical models, including automatic control systems with feedback mechanisms \cite{Si-Ammour}, population dynamics \cite{KuangY,LenburyY} and so on. Moreover, time delay has also been utilized in modeling HIV infection of CD$4^+$T-cells to characterize the time between the infection of these cells and the emission of viral particles at the cellular level \cite{CulshawR,YanY}. And thus differential equations involving time delay have garnered much attention from an increasing number of scholars \cite{Bellen,LiDF,OuJCAM2025,Wille}.
In particular, fractional diffusion models with time delay have been constructed and studied both theoretically \cite{Morgado} and numerically \cite{Hendy1,Hendy,LiL}. However, the above mentioned work is primarily valid for sufficiently smooth solutions. Few studies have addressed this type of fractional model with non-smooth solutions, apart from those by references \cite{Delay03-Cen,Delay04-Cen,OuCNSNS,Delay02-Cen,Delay01-Tan}.

\begin{figure}[!htb]
	\setlength{\abovecaptionskip}{0.cm}
	\setlength{\belowcaptionskip}{-0.cm}
	\centering
    \includegraphics[width=8cm]{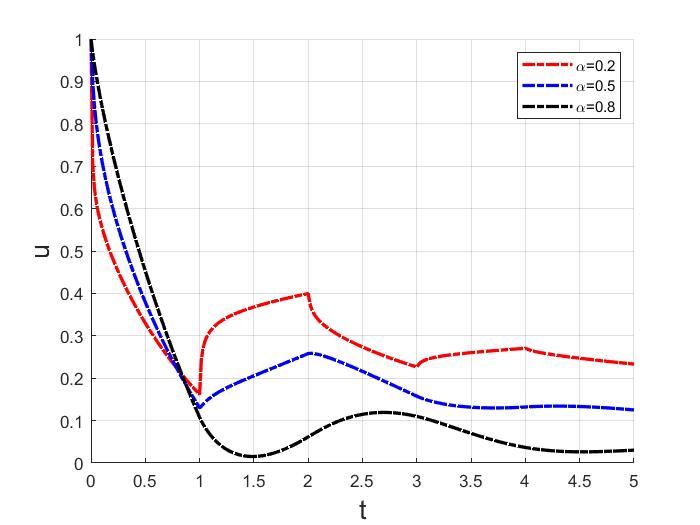}
	\caption{Multi-singularity problems arising from nonlocal delay models \cite{Delay03-Cen,Delay04-Cen}. }
    \label{fig1}
\end{figure}

Especially, Cen $et~al.$ \cite{Delay03-Cen} found that weak singularity near $t=0$ may propagate at integer multiples of the fixed delay in the delay fractional equations. Moreover, the solution exhibits increased regularity in each subsequent time interval beyond a singularity point compared to the preceding one, that is,
\begin{align}\label{regularity}
\Big|\frac{du(t)}{d t}\Big|\leq C\big(1+(t-k\tau)^{(k+1)\alpha-1}\big)
\end{align}
for $t\in(k\tau,(k+1)\tau],~k=0,1,2,\ldots,K-1.$
Subsequently, local error estimates for the delayed reaction-subdiffusion equation were established on uniform meshes using both the $L1$ method \cite{Delay05-Bu,Delay02-Cen} and Gr\"{u}nwald-Letnikov-type approximation \cite{Delay07-Bu}. For fractional differential equations (FDEs) without time delay, substantial progress has been made in designing nonuniform temporal meshes and fitted schemes to resolve initial singularities; representative advances include those reported in \cite{DengXY,LiaoH2018L1,LiaoH2021L2_1} and related works. Whereas the study on how to deal with the problems with multiple singularities effectively is still rare. To address this kind of problem, Tan $et~al.$ \cite{Delay01-Tan} proposed $L1$ method based on nonuniform meshes graded densely near all identified singular points (including $t=0$ and its integer-delay shifts) and coarsened elsewhere.
%Unfortunately, the strategy of mesh subdivision does not apply to the methods that have limitations on time steps, such as $L2$-$1_\sigma$ formula. This is because the step ratios of grid may not satisfy the condition in \cite{LiaoH2021L2_1}, and thus the great properties of discrete coefficients can not be guaranteed, such as monotonicity, boundedness, etc, which makes the convergence theory difficult.
Later, Cen $et~al.$ \cite{Delay04-Cen} employed the decomposition technique to isolate the non-smooth component and developed a fitted $L1$ scheme on uniform mesh for delay sub-diffusion equations. However, this approach is inherently limited to single-singularity problems and further requires stringent compatibility conditions between the initial values and the source term $f$. Therefore, to achieve high-order temporal accuracy for multi-singularity problems in delay fractional models, two corrected $L$-type methods---the $L1$ and $L2$-$1_\theta$ schemes---were recently introduced in \cite{Delay03-Cen}, both relying on prior knowledge of the asymptotic behavior of the exact solution near each singular point. Yet, except for a few analytically tractable cases, determining such asymptotics remains highly challenging, as exact solutions are generally unavailable for most delay FDEs. A systematic comparison of these existing numerical schemes for delay fractional sub-diffusion equations is summarized in Table \ref{REF-method}.

\begin{table}[!ht]\label{REF-method}
\caption{Numerical schemes for delay fractional differential equations, $\alpha\in(0,1)$.}
\renewcommand{\arraystretch}{1}
\def\temptablewidth{0.8\textwidth}
\begin{center}
\begin{tabular*}{\temptablewidth}{@{\extracolsep{\fill}}lccc}\hline
 scheme       & temporal rate              &time grid    & type of estimation        \\ \hline
$L1$ \cite{Delay05-Bu,Delay02-Cen}          & $\min\{(k+1)\alpha,1\}$       &uniform  & pointwise/local \\ \hline
G-L \cite{Delay07-Bu}           &$\alpha$ or 1              &uniform  & pointwise/local \\ \hline
Fitted $L1$ \cite{Delay04-Cen}          & $\min\{2\alpha,2-\alpha\}$       &uniform   & global                       \\ \hline
Corrected $L1$ \cite{Delay03-Cen}          & $\min\{3\alpha,2-\alpha\}$       &uniform   & global                       \\ \hline
Corrected $L2$-$1_\theta$ \cite{Delay03-Cen}          & $\min\{3\alpha,2\}$       &uniform   & global                       \\ \hline
$L1$ \cite{OuCNSNS,Delay06-Li,Delay01-Tan}            & $2-\alpha$    &nonuniform    & global           \\ \hline
SYM $L1$  & $2-\alpha/2$ & nonuniform & global \& local \\ \hline
SYM $L1$-ADI  & $\frac{3}{2}\alpha$ & nonuniform & global  \\ \hline
\end{tabular*}
\end{center}
\end{table}

However, as shown in Table \ref{REF-method}, the classical $L1$ method for discretizing the Caputo fractional derivative in time-delayed models yields only an optimal temporal convergence order of $2-\alpha$. To bridge this theoretical gap, we apply the $L1$ method not directly to the original delayed fractional sub-diffusion equation, but to an equivalent coupled system derived via the symmetric fractional-order reduction (SFOR) technique, which was first proposed in \cite{LyuP2022SFOR}. Specifically, SFOR introduces an auxiliary function $v=\partial_t^{\frac{\alpha}{2}}u$ to reformulate the original single-equation problem into a coupled system of lower-order fractional equations---constituting a non-fixed-order reduction strategy.
We rigorously establish that the resulting nonuniform SYM $L1$ scheme achieves the improved temporal convergence rate $\mathcal{O}\big(N^{-\min\{\gamma\frac{\alpha}{2},\, 2-\alpha/2\}}\big)$, as stated in Theorem \ref{thm-conv-1}. Furthermore, we extend this scheme to an alternating direction implicit (ADI) framework---the SYM $L1$-ADI method---which significantly reduces both memory requirements and computational complexity while preserving the  convergence order $\mathcal{O}\big(N^{-\min\{\gamma\frac{\alpha}{2},\, \frac{3}{2}\alpha\}}\big)$.

Nevertheless, global error analysis imposes stringent constraints on the time discretization, particularly requiring sufficiently small step sizes to guarantee convergence; this constraint becomes especially restrictive when the fractional order $\alpha$ is small. In contrast, local error analysis is particularly well-suited for cases with small $\alpha$ \cite{Kopteva2019}, as it naturally accommodates the initial singularity of the solution and mitigates excessive accumulation of local truncation error under long-term, slowly decaying integration dynamics. What's more, given that the existing analyses presented in \cite{Delay07-Bu,Delay05-Bu,Delay02-Cen} for delayed fractional models are limited to first‑order accuracy, it is highly imperative to enhance the precision of the numerical schemes. Consequently, we adopt a new local error analysis framework for the given model, thereby relaxing the rigid time-step dependence in global convergence theory and establishing a more efficient and accurate theoretical foundation for numerical simulations in the small-$\alpha$ regime.
This framework is simple and applicable to both finite difference and finite element spatial discretizations. We use more intuitive integral representations of the temporal truncation errors; see Lemma \ref{case-2}. This new proof relies on a simple barrier function, and may be of independent interest. Once error bounds on graded meshes are established for a paradigm problem without spatial derivatives, they seamlessly extend
to finite difference and finite element spatial discretizations of (\ref{eq-gov}) for any $\mathrm{d} \ge 1$.

The main contributions of this paper are summarized below:
\begin{itemize}[leftmargin=*]
  \item We propose a order reduction method that enables rigorous construction and convergence analysis of nonuniform $L1$ schemes for time-delayed fractional sub-diffusion equations.
  \item Under identical regularity assumptions, our nonuniform SYM $L1$ scheme achieves a temporal convergence order of $2-\alpha/2$, strictly superior to the classical $2-\alpha$ rate achieved by direct $L1$ discretization of the original equation.
  \item The SYM $L1$ scheme admits a natural and stable extension to an ADI scheme for two-dimensional delayed fractional sub-diffusion problems, thereby enhancing computational efficiency without compromising accuracy or theoretical guarantees.
  \item We propose a new framework for analyzing the error of the $L1$ scheme  on graded grids, which is particularly well-suited for cases with small $\alpha$. In particular, our error bounds accurately predict that milder (compared to the optimal) grading yields optimal convergence rates at continuous points.
\item The local error analysis presented in \cite{Delay07-Bu,Delay05-Bu,Delay02-Cen} is restricted to at most first-order convergence. While our local error analysis attains the theoretically optimal convergence order of $2-\alpha/2$.
\end{itemize}

The structure of this paper is as follows. The idea of SFOR method and stability of the equivalent models are investigated in Section \ref{sec-SFOR}. In Section \ref{sec-na}, two types of numerical algorithms are constructed to solve the considered models. In Section \ref{sec-Local}, the local error analysis of SYM $L1$ scheme is developed. Numerical experiments are carried out to verify our theoretical results in Section \ref{sec-num}. Finally, some conclusions are drawn in Section \ref{conclusion}.

\section{The SFOR method}\label{sec-SFOR}
\subsection{The proof of Lemma \ref{SFOR-main}}
This section is devoted to the proof idea of Lemma \ref{SFOR-main}, a key ingredient for the present work.
\begin{lemma}\label{SFOR-main}
For $\alpha\in(0,1)$ and $\partial_t^{\frac{\alpha}{2}} u(0)=0$, it holds that
\begin{align}
\partial_t^{\alpha} u(t)=\partial_t^{\frac{\alpha}{2}}\big(\partial_t^{\frac{\alpha}{2}} u(t)\big).
\end{align}
\end{lemma}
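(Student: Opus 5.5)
The plan is to rewrite both sides through the Riemann--Liouville integral $\mathcal{I}^{\beta}$ and use the semigroup property $\mathcal{I}^{\beta}\mathcal{I}^{\gamma}=\mathcal{I}^{\beta+\gamma}$. The semigroup property follows from Fubini's theorem and the Beta-function identity $\omega_\beta*\omega_\gamma=\omega_{\beta+\gamma}$. Set $v(t):=\partial_t^{\frac{\alpha}{2}}u(t)=\mathcal{I}^{1-\frac{\alpha}{2}}u'(t)$. Since $0<\frac{\alpha}{2}<1$, the definition of the Caputo derivative gives
\[
\partial_t^{\frac{\alpha}{2}}v(t)=\mathcal{I}^{1-\frac{\alpha}{2}}v'(t).
\]
The goal is therefore to show that $\mathcal{I}^{1-\frac{\alpha}{2}}v'=\mathcal{I}^{1-\alpha}u'$.

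The key step is to pass from the Caputo to the Riemann--Liouville form using the hypothesis $v(0)=0$. For an absolutely continuous $v$ we have $v(t)=v(0)+\mathcal{I}^1 v'(t)=\mathcal{I}^1v'(t)$. Hence
\[
\mathcal{I}^{1-\frac{\alpha}{2}}v'=\frac{d}{dt}\,\mathcal{I}^{1}\mathcal{I}^{1-\frac{\alpha}{2}}v'=\frac{d}{dt}\,\mathcal{I}^{1-\frac{\alpha}{2}}\mathcal{I}^{1}v'=\frac{d}{dt}\,\mathcal{I}^{1-\frac{\alpha}{2}}v.
\]
Without the condition $v(0)=0$ there would be an extra term $v(0)\omega_{1-\frac{\alpha}{2}}(t)$, and that term is exactly what the hypothesis removes. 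Substituting $v=\mathcal{I}^{1-\frac{\alpha}{2}}u'$ and applying the semigroup property gives
\[
\frac{d}{dt}\,\mathcal{I}^{1-\frac{\alpha}{2}}\mathcal{I}^{1-\frac{\alpha}{2}}u'=\frac{d}{dt}\,\mathcal{I}^{2-\alpha}u'=\frac{d}{dt}\,\mathcal{I}^{1}\mathcal{I}^{1-\alpha}u'=\mathcal{I}^{1-\alpha}u'=\partial_t^\alpha u(t).
\]
The last equalities hold because $\mathcal{I}^{1-\alpha}u'$ is locally integrable and $\frac{d}{dt}\mathcal{I}^1 g=g$ almost everywhere, and everywhere when $g$ is continuous.

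The main obstacle is justifying the regularity needed in each step, since $u$ has the weak singularity \eqref{regularity} near $t=0$ and near the points $k\tau$. Specifically, $v$ must be absolutely continuous so that $v=v(0)+\mathcal{I}^1v'$ is valid, and Fubini's theorem must apply to the iterated integrals. I would assume $u\in C[0,T]\cap C^1(0,T]$ with $|u'(t)|\le C t^{\sigma-1}$ for some $\sigma>0$, which is consistent with \eqref{regularity}. Under this bound all the kernels are integrable, so Fubini's theorem applies. The remaining issue is the absolute continuity of $v$, and the route above only needs $v$ to be the integral of its derivative. If $v'$ is hard to control directly, I would avoid differentiating $v$ by interpreting $\partial_t^{\frac{\alpha}{2}}v$ as $\frac{d}{dt}\mathcal{I}^{1-\frac{\alpha}{2}}\big(v-v(0)\big)$. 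This is the standard equivalent form of the Caputo derivative for functions in $AC[0,T]$. With $v(0)=0$, the chain of identities above then requires only the semigroup property and the fundamental theorem of calculus.
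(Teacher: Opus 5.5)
Your argument is correct and is essentially the paper's proof. Both set $v=\partial_t^{\alpha/2}u$, use $v(0)=0$ to replace the Caputo derivative of $v$ by $\frac{d}{dt}\mathcal{I}^{1-\alpha/2}v$, and collapse $\mathcal{I}^{1-\alpha/2}\mathcal{I}^{1-\alpha/2}u'=\mathcal{I}^{2-\alpha}u'$. The only difference is the last step: the paper integrates by parts and differentiates the convolution $\omega_{1-\alpha}*u$, whereas you write $\mathcal{I}^{2-\alpha}=\mathcal{I}^{1}\mathcal{I}^{1-\alpha}$ and apply the fundamental theorem of calculus. Your version is slightly cleaner and needs only $u'\in L^1$ and $v$ absolutely continuous, so you can drop the assumption $u\in C^1(0,T]$, which \eqref{regularity} does not in fact imply since $u'$ may blow up at $t=k\tau^+$.
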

\begin{proof}
Taking $v(t)=\partial_t^{\frac{\alpha}{2}} u(t)$, from the composition property \cite[Lemma 1.3]{Kubica}, $\mathcal{I}^{p}\mathcal{I}^{q}g(t)=\mathcal{I}^{p+q}g(t)$, $p$, $q>0$, one has
\begin{align*}
\mathcal{I}^{1-\frac{\alpha}{2}}v(t)
&= \mathcal{I}^{1-\frac{\alpha}{2}}\partial_t^{\frac{\alpha}{2}} u(t)=  \mathcal{I}^{1-\frac{\alpha}{2}}\mathcal{I}^{1-\frac{\alpha}{2}} u'(t)= \mathcal{I}^{2-\alpha} u'(t)\\
&=\int_0^t\omega_{2-\alpha}(t-s)u'(s)ds\\
&=u(s)\omega_{2-\alpha}(t-s)|_0^t+\int_0^t\omega_{1-\alpha}(t-s)u(s)ds\\
&=-u(0)\omega_{2-\alpha}(t)+\int_0^t\omega_{1-\alpha}(t-s)u(s)ds.
\end{align*}
Using $\partial_t^{\frac{\alpha}{2}}v(t)=\frac{d}{dt}\mathcal{I}^{1-\frac{\alpha}{2}}v(t)$ as $v(0):=\partial_t^{\frac{\alpha}{2}} u(0)=0$ \cite[Lemma 1.1]{Kubica}, we have
\begin{align*}
\partial_t^{\frac{\alpha}{2}}\big(\partial_t^{\frac{\alpha}{2}}u(t)\big)&=\partial_t^{\frac{\alpha}{2}}v(t)=\frac{d}{dt}\mathcal{I}^{1-\frac{\alpha}{2}}v(t)=\frac{d}{dt}\bigg(-u(0)\omega_{2-\alpha}(t)+\int_0^t\omega_{1-\alpha}(s)u(t-s)ds\bigg)\\
&=-u(0)\omega_{1-\alpha}(t)+u(0)\omega_{1-\alpha}(t)+\int_0^t\omega_{1-\alpha}(s)u_t(t-s)ds\\
&=\int_0^t\omega_{1-\alpha}(s)u_t(t-s)ds\\
&:=\partial_t^{\alpha}u(t).
\end{align*}
The proof completes.
\end{proof}

Denote $\beta:=\frac{\alpha}{2}$. By Lemma \ref{SFOR-main}, the model (\ref{eq-gov}) can be equivalently solved by the following coupled equations:

\begin{equation}\label{eq-gov-trans}
\begin{cases}
\partial_t^{\beta} v - \Delta u + u(t-\tau) = f(\textbf{x},t), & (\textbf{x},t) \in \Omega \times (0,K\tau], \\
v=\partial_t^{\beta}u,
 & (\textbf{x},t) \in \Omega \times (0,K\tau], \\
u(\textbf{x},t) = a_0(\textbf{x},t), v(\textbf{x},0)=0, & (\textbf{x},t) \in \bar{\Omega}\times[-\tau,0], \\   u(\textbf{x},t) = 0, & (\textbf{x},t) \in \partial \Omega \times (0,K\tau].
\end{cases}
\end{equation}
One can observe that, by utilizing the proposed SFOR method, the explicit orders of the
Caputo derivatives in the first two equalities of the resulting coupled system (\ref{eq-gov-trans}) are all $\frac{\alpha}{2}$. According to the regularity (\ref{regularity}) of $u$, we assume that the solution $v$
satisfies the following regularity:
\begin{align}\label{regularity_v} \Big|\frac{d v(t)}{d t}\Big|\leq C\big(1+(t-k\tau)^{(k+1)\beta-1}\big)
\end{align}
for $t\in(k\tau,(k+1)\tau],~k=0,1,2,\ldots,K-1.$

\subsection{Stability for system (\ref{eq-gov-trans})}
Next, we prove the stability of the coupled system (\ref{eq-gov-trans}). A useful property of fractional derivative is presented. The proof can be found in Theorem 3.2 in \cite{Kubica}. Let $L^2(\Omega)$ be the square-integrable function space with inner product $(\cdot,\cdot)_{L^2(\Omega)}$ (or $(\cdot,\cdot)$ for short).
\begin{lemma}\label{Coercivity}{\rm(Coercivity)} For any function $h\in W^{1,1}(0,K\tau)$ with $\alpha\in(0,1)$, one has the inequality
$$\frac{2}{\Gamma{(\alpha)}}\int_0^t(t-s)^{\alpha-1}h(s)\partial_t^\alpha h(s)ds\geq h^2(t)-h^2(0).$$
\end{lemma}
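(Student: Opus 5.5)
The plan is to establish the weighted inequality pointwise: first prove a pointwise lower bound for $h(s)\partial_t^\alpha h(s)$ in terms of an exact derivative, then integrate against the kernel $\omega_\alpha(t-s)$ and use the inversion $\mathcal{I}^{\alpha}\partial_t^\alpha g=g-g(0)$. By the density of smooth functions in $W^{1,1}(0,K\tau)$, together with the continuity of both sides under that convergence, it suffices to take $h\in C^1[0,K\tau]$.

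\emph{Step 1 (Alikhanov-type pointwise inequality).} We aim for
\begin{align*}
h(s)\,\partial_t^\alpha h(s)\ \ge\ \tfrac12\,\partial_t^\alpha\big(h^2\big)(s),\qquad s\in(0,K\tau].
\end{align*}
Write $D(s):=h(s)\partial_t^\alpha h(s)-\tfrac12\partial_t^\alpha(h^2)(s)=\int_0^s\omega_{1-\alpha}(s-r)\big(h(s)-h(r)\big)h'(r)\,dr$, and set $y(r):=h(s)-h(r)$, so that $h'(r)=-y'(r)$ and $y(s)=0$. Then
\[
D(s)=-\tfrac12\int_0^s\omega_{1-\alpha}(s-r)\,\frac{d}{dr}\,y^2(r)\,dr .
\]
Integrating by parts gives
\[
D(s)=\tfrac12\,\omega_{1-\alpha}(s)\,y^2(0)+\tfrac12\int_0^s\big(-\partial_r\omega_{1-\alpha}(s-r)\big)\,y^2(r)\,dr .
\]
Here $\omega_{1-\alpha}(s-r)$ is increasing in $r$, so $-\partial_r\omega_{1-\alpha}(s-r)=-\omega_{-\alpha}(s-r)<0$ in the Riemann–Liouville sense. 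The signs therefore need care, and the main obstacle is handling this integration by parts correctly, including the boundary term at $r=s$. That term is $\omega_{1-\alpha}(s-r)y^2(r)$, which tends to $0$ because $y(r)=O(s-r)$ for $C^1$ functions while $\omega_{1-\alpha}(s-r)\sim(s-r)^{-\alpha}$. To fix the signs, I would instead write $\frac{d}{dr}y^2(r)=2y(r)y'(r)$ and integrate by parts with the antiderivative chosen to vanish at $r=s$. This is the standard argument of Alikhanov, and it yields $D(s)=\tfrac12\omega_{1-\alpha}(s)y(0)^2+\tfrac{\alpha}{2}\int_0^s(s-r)^{-\alpha-1}y(r)^2\,dr/\Gamma(1-\alpha)\ge0$.

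\emph{Step 2 (integration against the kernel).} Multiply the inequality of Step 1 by $2\omega_\alpha(t-s)\ge0$ and integrate over $s\in(0,t)$:
\[
\frac{2}{\Gamma(\alpha)}\int_0^t(t-s)^{\alpha-1}h(s)\partial_t^\alpha h(s)\,ds\ \ge\ \mathcal{I}^\alpha\partial_t^\alpha(h^2)(t).
\]
By the composition property $\mathcal{I}^\alpha\mathcal{I}^{1-\alpha}=\mathcal{I}^{1}$ already used in the proof of Lemma \ref{SFOR-main}, the right-hand side equals $\mathcal{I}^1\big((h^2)'\big)(t)=h^2(t)-h^2(0)$. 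The integrability needed to apply Fubini is fine, since $\partial_t^\alpha h\in L^1$ for $h\in W^{1,1}$.

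\emph{Step 3 (density).} Finally, pass from $C^1$ to $W^{1,1}$ by density. Both sides of the inequality are continuous under $W^{1,1}$ convergence: $h$ converges uniformly, and $\mathcal{I}^\alpha\big(h\,\partial_t^\alpha h\big)$ converges in $L^1$, which suffices for a.e. $t$ and then, by continuity, for every $t$.
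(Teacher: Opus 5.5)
The paper gives no proof of this lemma; it defers to Theorem 3.2 of Kubica et al. Your route is the standard argument for this estimate and is correct in substance: Alikhanov's pointwise inequality $h\,\partial_t^\alpha h\ge\frac12\partial_t^\alpha(h^2)$, then $\mathcal{I}^\alpha$ with $\mathcal{I}^\alpha\mathcal{I}^{1-\alpha}=\mathcal{I}^1$, then density. Two points need fixing.

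\textbf{Sign in Step 1.} Your displayed integration by parts has the wrong sign, and that slip is the only source of the ``obstacle'' you describe. Done correctly it gives $D(s)=\frac12\omega_{1-\alpha}(s)y^2(0)+\frac12\int_0^s\partial_r[\omega_{1-\alpha}(s-r)]\,y^2(r)\,dr$, with $\partial_r[\omega_{1-\alpha}(s-r)]=\alpha(s-r)^{-\alpha-1}/\Gamma(1-\alpha)>0$. This is exactly the nonnegative expression you end with, so there is no sign difficulty at all.

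\textbf{Density in Step 3.} $L^1$ convergence of $\mathcal{I}^\alpha(h_n\partial_t^\alpha h_n)$ only gives the inequality for a.e.\ $t$, along a subsequence. The step ``by continuity, for every $t$'' is asserted without proof, and $\mathcal{I}^\alpha$ of a general $L^1$ function need not be continuous or even finite everywhere. Likewise, your remark that $\partial_t^\alpha h\in L^1$ does not by itself make the weighted integral converge at a given $t$.

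The clean fix is to pass to the limit in Step 1 rather than in the final inequality. Since $h_n\partial_t^\alpha h_n-\frac12\partial_t^\alpha(h_n^2)\to h\,\partial_t^\alpha h-\frac12\partial_t^\alpha(h^2)$ in $L^1(0,K\tau)$, the pointwise inequality holds a.e.\ for every $h\in W^{1,1}$. Then run Step 2 directly on $h$ at each fixed $t$. This is legitimate because both $|h\,\partial_t^\alpha h|$ and $|\partial_t^\alpha(h^2)|$ are bounded by $2\|h\|_\infty\,(\omega_{1-\alpha}*|h'|)$. By Tonelli, $\int_0^t\omega_\alpha(t-s)\,(\omega_{1-\alpha}*|h'|)(s)\,ds=\int_0^t|h'(r)|\,dr<\infty$. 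Hence every integral involved converges absolutely, Fubini applies, and the inequality holds for every $t$ with no continuity argument needed.
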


\begin{lemma}
For the stability of {\rm(\ref{eq-gov-trans})}, one has
$$\|v\| + \|\nabla u\|\leq C(\|\nabla a_0\|+\|u_{-\tau}\|+\|f\|),$$
where $C$ is a constant.
\end{lemma}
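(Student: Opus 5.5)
Our approach is an energy estimate for the coupled system (\ref{eq-gov-trans}) that uses the symmetric structure: test the first equation with $v$, test the second with $-\Delta u$, add, and let the cross terms cancel.

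\textbf{Step 1 (testing and cancellation).} Take the $L^2$ inner product of the first equation of (\ref{eq-gov-trans}) with $v$:
\[
(\partial_t^{\beta} v, v) + (\nabla u, \nabla v) + (u_{-\tau}, v) = (f, v),
\]
where we integrated by parts and used $u=0$ on $\partial\Omega$ (so also $v=\partial_t^\beta u=0$ there). Next, apply $-\Delta$ to the relation $v=\partial_t^{\beta}u$ and take the inner product with $u$. Equivalently, test $\nabla v=\partial_t^\beta \nabla u$ against $\nabla u$, which gives
\[
(\nabla v,\nabla u)=(\partial_t^\beta\nabla u,\nabla u).
\]
Substituting this into the first identity, the cross term $(\nabla u,\nabla v)$ turns into a Caputo term in $\nabla u$. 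We obtain the pointwise-in-time identity
\[
(\partial_t^{\beta} v, v) + (\partial_t^\beta \nabla u, \nabla u) = (f-u_{-\tau}, v).
\]

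\textbf{Step 2 (coercivity).} Multiply by $\frac{2}{\Gamma(\beta)}(t-s)^{\beta-1}$ and integrate over $(0,t)$. Lemma \ref{Coercivity}, applied with order $\beta$ pointwise in $\mathbf{x}$ and then integrated over $\Omega$, gives
\[
\|v(t)\|^2-\|v(0)\|^2+\|\nabla u(t)\|^2-\|\nabla u(0)\|^2\le 2\,\mathcal I^{\beta}\big(\|f-u_{-\tau}\|\,\|v\|\big)(t).
\]
We then use $v(0)=0$ and $u(0)=a_0(\cdot,0)$, so that $\|\nabla u(0)\|\le\|\nabla a_0\|$.

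\textbf{Step 3 (closing the estimate).} This is the main obstacle, because the right-hand side still contains $\|v\|$ inside a weakly singular integral. We set $E(t)=\max_{0\le s\le t}\big(\|v(s)\|+\|\nabla u(s)\|\big)$ and bound the right-hand side by
\[
2\,\omega_{1+\beta}(t)\,\max_s\|f-u_{-\tau}\|\,E(t).
\]
The left-hand side at the maximizing time is at least $E(t)^2/2$. Dividing by $E(t)$ then yields
\[
E(t)\le C\big(\|\nabla a_0\|+\|u_{-\tau}\|+\|f\|\big),
\]
with $C$ depending on $(K\tau)^\beta/\Gamma(1+\beta)$, where the norms in time are taken as maxima over $[0,K\tau]$.

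Alternatively, if a finer bound is wanted, we would use Young's inequality together with a fractional Grönwall inequality. Either route proves the lemma. We would also remark that $\|u_{-\tau}\|$ can be treated as data, since on each delay interval it is known from the previous interval or from $a_0$; iterating over $k$ then gives a bound purely in terms of the data.
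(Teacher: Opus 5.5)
Your proposal is correct and follows the same route as the paper. You test the first equation with $v$ and the second (after applying $\nabla$) with $\nabla u$, cancel the cross terms to reach $(\partial_t^\beta v,v)+(\partial_t^\beta\nabla u,\nabla u)=(f-u_{-\tau},v)$, and then invoke Lemma \ref{Coercivity}. Your Steps 2--3 only make explicit what the paper compresses into ``by Lemma \ref{Coercivity}'' and ``the triangle inequality'': you integrate against the kernel $\omega_\beta(t-s)$, then close with the running maximum $E(t)$ via $E^2\le 2A^2+2BE$, which is how ``dividing by $E(t)$'' should be read. The cost is a constant depending on $(K\tau)^\beta/\Gamma(1+\beta)$ and max-in-time norms on the right-hand side.
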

\begin{proof}
Take the inner product $(\cdot,\cdot)_{L^2(\Omega)}$ with $v$ and $\Delta u$ for the first two equations of (\ref{eq-gov-trans}), respectively. It gives that
\begin{align*}
(\partial_t^{\beta} v,v) - (\Delta u,v) + (u_{-\tau},v) &=(f,v), \\
(v,\Delta u)  &= (\partial_t^{\beta} u,\Delta u).
\end{align*}
Adding above equations, we have
\begin{align*}
(\partial_t^{\beta} v,v) + (\partial_t^{\beta} \nabla u,\nabla u)
&= (f,v)- (u_{-\tau},v) \\
&\le(\|u_{-\tau}\|+\|f\|)(\|v\|+\|\nabla u\|).
\end{align*}
By Lemma \ref{Coercivity}, one has
\begin{align*}
\|v\|^2 + \|\nabla u\|^2
\le& \|\nabla a_0\|^2+(\|u_{-\tau}\|+\|f\|)(\|v\|+\|\nabla u\|).
\end{align*}
The desired result follows by the triangle inequality.
\end{proof}

\section{Numerical Algorithms}\label{sec-na}
\subsection{Preliminary}
Next, we divide time interval and space interval as follows. For positive constants $M_1,M_2$, let $h_1=\frac{L_1}{M_1},~x_i=ih_1,~0\leq i \leq M_1,~h_2=\frac{L_2}{M_2},~y_j=jh_2,~0\leq j \leq M_2.$ Let $\bar{\Omega}_h=\{\textbf{x}_h=(x_i,y_j)|0\leq i\leq M_1,~0\leq j\leq M_2\}, ~\Omega_h=\bar{\Omega}_h\cap\Omega$ and $\partial\Omega_h=\bar{\Omega}_h\cap\partial\Omega$. For any grid function $u_h=\{u_{ij}|0\leq i \leq M_1,~0\leq j \leq M_2\}$ and it subspace
$u_{0h} = \left\{ u \in u_h \mid u_{0,j} = u_{M_1, j} = 0, \, 0 \leq j \leq M_2; \, u_{i,0} = u_{i,M_2} = 0, \, 0 \leq i \leq M_1 \right\}$, we use standard finite difference operator $\Delta_h=\delta_x^2+\delta_y^2$ on $\bar{\Omega}_h$ to discretize the Laplace operator $\Delta$, where
$$\delta_xu_{i+\frac12,j}=
 \frac{u_{i+1,j}-u_{i,j}}{h_1},~~
 \delta_x^2u_{i,j}=\frac{\delta_xu_{i+1/2,j}-\delta_xu_{i-1/2,j}}{h_1}.$$
The notations $\delta_yu_{i,j+\frac12}$ and $\delta_y^2u_{i,j}$ can be defined similarly, thus we omit here. For grid functions $u,v\in u_h$, the discrete inner products and norms are defined as follows:
\begin{align*}
 &(u,v)=h_1h_2\sum\limits_{i=1}^{M_1-1}\sum\limits_{j=1}^{M_2-1}u_{i,j}v_{i,j},~~
 \|u\|^2=( u,u),~~\|u\|_\infty=\max_{0\leq i\leq M_1,~0\leq j\leq M_2}|u_{i,j}|,\\
 &( \delta_xu,\delta_xv)
 =h_1h_2\sum\limits_{i=1}^{M_1}\sum\limits_{j=1}^{M_2-1}\delta_xu_{i-\frac12,j}\delta_xv_{i-\frac12,j},~~ \|\delta_x u\|^2=( \delta_xu,\delta_xu),\\
&\|\nabla_hu\|=\sqrt{\|\delta_xu\|^2+\|\delta_yu\|^2}.
 %~~|u|_1=\sqrt{\|\delta_xu\|^2+\|\delta_yu\|^2},\\
 %~~\|\Delta_hu\|^2=h_1h_2\sum\limits_{i=1}^{M_1}\sum\limits_{j=1}^{M_2-1}(\Delta_hu_{ij})^2.
\end{align*}
Corresponding notations in $y$ direction are defined similarly.

For a positive integer $N$, the interval $[-\tau,K\tau]$ is divided into $2(K+1)N$ subintervals with $-\tau=t_0<t_1<\cdots< 0=t_{2N}<t_{2N+1}<\cdots<t_{2 (K+1)N}=K\tau$. Denote time step sizes $\rho_n=t_n-t_{n-1},~1\leq n \leq 2(K+1)N.$ Due to the time derivative discontinuity of $u$ near $t=0,\tau,2\tau,\ldots,(K-1)\tau,$ a family of graded meshes will be used, which has dense points near the point of time delay and sparse points in other places. More precisely, we assume that for a fixed parameter $\gamma\geq1$,
\begin{align}\label{b-1}
\begin{array}{c}
 t_{0}=-\tau,~t_{n}=\left\{
 \begin{array}{ll}
 \dfrac{\tau}{2}\Big(\dfrac{n-pN}{N}\Big)^\gamma+\big(\frac{p}{2}-1\big)\tau,~~pN+1\leq n<(p+1)N,\\
 -\dfrac{\tau}{2}\Big(\dfrac{(2+p)N-n}{N}\Big)^\gamma+\frac{p}{2}\tau,~~(p+1)N\leq n\leq (p+2)N,
 \end{array}\right.
 \end{array}
 \end{align}
where $p=0,2,\ldots,2K$.
Particularly, the step sizes of nonuniform meshes (\ref{b-1}) on $[-0.5,1]$ are given by Figure \ref{figure2}.
\begin{figure}[!htbp]
\setlength{\abovecaptionskip}{0.28cm}
\begin{center}
\includegraphics[height=4.5cm,width=6.5cm]{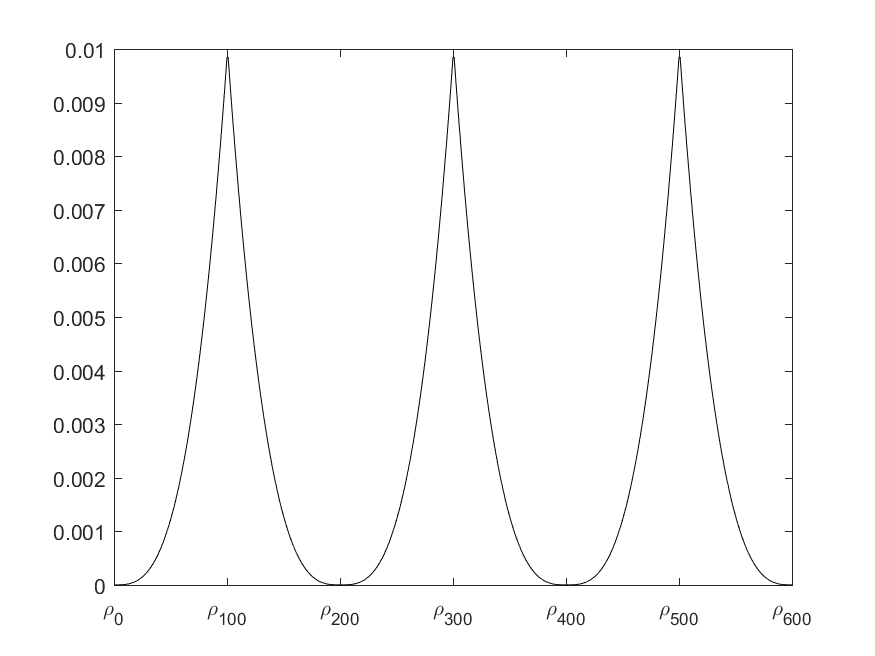}
\end{center}
\caption{The step sizes of nonuniform meshes (\ref{b-1}) with $\gamma=4,~N=100$ \cite{OuCNSNS}.}\label{figure2}
\end{figure}

It is easy to check that (\ref{b-1}) satisfies
\begin{align}\label{t-2}
\rho:=t_{2N+1}\simeq N^{-\gamma},~ t_{2\ell N+j}-t_{2\ell N}\simeq \rho j^\gamma,~ \rho_k\leq C(\rho_{2N+1})^{\frac{1}{\gamma}}(t_k-(\ell-1)\tau)^{1-\frac{1}{\gamma}},
\end{align}
where $2\ell N+1\leq k\leq2(\ell+1)N,~~\ell=1,2,\ldots,K.$ Specially,
from Figure \ref{figure2}, we can observe that $\rho_{2\ell N+1}=\rho,~\ell=1,2,\ldots,K$.
 Denote $\varrho=\max\limits_{2N+1\leq 2(K+1)N-1}\rho_n/\rho_{n+1}$.

The $L1$ formula \cite{Sun1} is used to approximate Caputo derivative, then we have
\begin{align}\nonumber
\bar\partial_t^{{\beta}}u^n
=&\sum\limits_{k=2N+1}^{n}\int_{t_{k-1}}^{t_k}\frac{(t_n-z)^{-{\beta}}}{\Gamma{(1-\beta)}}
\frac{u(t_k)-u(t_{k-1})}{\rho_k}dz\\\label{dd-1}
=&\sum\limits_{k=2N+1}^{n}A_{n-k}^{(n)}[u(t_k)-u(t_{k-1})],~~2N+1\leq n \leq 2(K+1)N,
\end{align}
where $A_{n-k}^{(n)}=\int_{t_{k-1}}^{t_k}\frac{(t_n-z)^{-{\beta}}}{\Gamma{(1-\beta)}\rho_k}dz$ and $A_{0}^{(n)}\ge A_{1}^{(n)}\ge\cdots\ge A_{n-1}^{(n)}>0$. Denote
 \begin{align}\label{b-3}
\begin{array}{c}
P_{n-j}^{(n)}=\left\{
\begin{array}{ll}
\frac{1}{A_{0}^{(n)}},\quad j=n,\\
\frac{1}{A_{0}^{(j)}}\sum\limits_{k=j+1}^{n}
(A_{k-j-1}^{(k)}-A_{k-j}^{(k)})P_{n-k}^{(n)},
\quad 2N+1\leq j\leq n-1,
\end{array}\right.
\end{array}
\end{align}
which are chosen to enforce the identity
\begin{align}\label{b-4}
\sum\limits_{j=k}^{n}P_{n-j}^{(n)}A_{j-k}^{(j)}\equiv1~~ \mbox{for}~2N+1\leq k\leq n.
\end{align}

\iffalse
\textbf{Nonuniform $L1^+$ formula}
We approximate the Caputo derivative at time $t=t_{n-1/2}$, called the $L1^+$ formula, as follows
\begin{align}\label{L1+}
(\bar\partial_t^\beta v)^{n-\frac12}:=\frac{1}{\tau_n} \int_{t_{n-1}}^{t_n}\int_0^t
\omega_{1-\beta}(t-s)(\Pi_1v)' (s) dsdt =\sum\limits_{k=1}^{n}A_{n-k}^{(n)}\nabla_{\tau}v^k~~\mbox{for} ~n\geq 1,
\end{align}
in which the discrete convolution kernels $A^{(n)}_{n-k}$ are given by
\[A^{(n)}_{n-k}:= \frac{1}{\tau_n\tau_k}\int_{t_{n-1}}^{t_n}\int_{t_{k-1}}^{\min\{t,t_k\}}
\omega_{1-\beta}(t-s)dsdt~~\mbox{for} ~1\leq k\le n.\]
\fi

Next, some crucial lemmas are given as follows.
\begin{lemma}\label{P}
The discrete sequence $P_{n-k}^{(n)}$  satisfies
$$0<P_{n-k}^{(n)}<\Gamma(2-\beta)\rho_k^\beta,~2N+1\le k\le n,$$
and
$$\sum_{k=2\ell N+1}^nP_{n-k}^{(n)}\big(t_k-(\ell-1)\tau\big)^{-{\beta}}\leq C~~for~1\leq\ell\leq K.$$
\end{lemma}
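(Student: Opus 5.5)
Both bounds come from two properties of the kernels: the $A^{(n)}_{n-k}$ are monotone, and the complementary kernels satisfy the identity (\ref{b-4}). This is the route of Liao et al.\ for nonuniform $L1$ formulas; the only change is that the summation starts at $k=2N+1$ and the exponent is $\beta$. The second bound is the delicate one, and it will be handled by the same comparison idea.

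\emph{Step 1: positivity.} Since $A_{k-j-1}^{(k)}-A_{k-j}^{(k)}\ge0$ by the monotonicity $A_0^{(n)}\ge A_1^{(n)}\ge\cdots$, the recursion (\ref{b-3}) runs backwards in $j$ from $P_0^{(n)}=1/A_0^{(n)}>0$. By induction every $P^{(n)}_{n-j}\ge0$. Strict positivity follows because the term $k=j+1$ is strictly positive: $A^{(j+1)}_0>A^{(j+1)}_1$, which holds since $(t-z)^{-\beta}$ is strictly decreasing.

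\emph{Step 2: upper bound on $P$.} Take $k=j$ in (\ref{b-4}) and use $A^{(i)}_{i-j}\ge A^{(i)}_{i-j}$ evaluated through the kernel lower bound
\[
A^{(i)}_{i-j}\ge \frac{(t_i-t_{j-1})^{-\beta}}{\Gamma(1-\beta)}.
\]
For the claimed bound, rather than going through (\ref{b-4}), I will use the standard mean-value argument:
\[
A_0^{(k)}=\frac{\rho_k^{-\beta}}{\Gamma(2-\beta)}.
\]
This gives $P_0^{(n)}=\Gamma(2-\beta)\rho_n^\beta$. For $k<n$, (\ref{b-4}) with index $k$ reads
\[
P_{n-k}^{(n)}A_0^{(k)}=1-\sum_{j=k+1}^n P^{(n)}_{n-j}A^{(j)}_{j-k}<1,
\]
using Step 1. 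Hence $P^{(n)}_{n-k}<1/A_0^{(k)}=\Gamma(2-\beta)\rho_k^\beta$.

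\emph{Step 3: the weighted sum.} Set $s=t-(\ell-1)\tau$, so that $s\ge\tau$ on the relevant range, since $t_k\ge \ell\tau-\tau+\dots$. In fact for $k\ge 2\ell N+1$ we have $t_k>(\ell-1)\tau$ by the mesh definition, with distance at least $t_{2\ell N+1}-(\ell-1)\tau$. The plan is a comparison function: find $\phi$ with $\phi(t_k)\ge(t_k-(\ell-1)\tau)^{-\beta}$ and $\sum_{j=k}^n A^{(j)}_{j-k}\ge c\,(t_k-(\ell-1)\tau)^{-\beta}$ in a discrete sense. Concretely, I will use the identity (\ref{b-4}) summed against $\nabla_\tau$ of the function $w(t)=\omega_{1+\beta}(t-(\ell-1)\tau)$. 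The $L1$ discretization of $\partial_t^\beta w$, with the derivative restricted to $t>(\ell-1)\tau$, is bounded below by roughly $c(t_k-(\ell-1)\tau)^{0}\cdot(\ldots)$. A cleaner route is available: since $(t-(\ell-1)\tau)^{-\beta}$ is decreasing, write it as $\Gamma(1-\beta)$ times the continuous kernel and compare with $\sum_{j} A^{(j)}_{j-k}$, via
\[
A^{(j)}_{j-k}\ge\frac{(t_j-t_{k-1})^{-\beta}}{\Gamma(1-\beta)}.
\]
Then exchange sums using (\ref{b-4}):
\[
\sum_k P^{(n)}_{n-k}\,\nabla_\tau\mathcal I^{1-\beta}\!\ast\ldots\le \mathcal{I}\text{-bounded}.
\]
The precise form I aim to use is the known estimate
\[
\sum_{k} P^{(n)}_{n-k}\,\omega_{1-\beta}(t_k-(\ell-1)\tau)\le \omega_1(t_n-(\ell-1)\tau)\le C,
\]
where $C$ depends on $K\tau$. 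It is proved by checking that $\sum_{j\le k}A^{(k)}_{k-j}\nabla_\tau w^j\ge\omega_{1-\beta}(t_k-(\ell-1)\tau)$ for $w(t)=\omega_{1}\big((t-(\ell-1)\tau)_+\big)$. This inequality follows from convexity of the kernel, since the $L1$ interpolant of a linear function is exact; the part of the sum over $[(\ell-1)\tau,\ell\tau]$ or before the shift only adds nonnegative terms. One then multiplies by $P$, uses (\ref{b-4}) to collapse, and gets $\le w(t_n)\le K\tau$.

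The main obstacle is this last comparison near the shifted origin $(\ell-1)\tau$: the sum starts at $2\ell N+1$, while the kernel sums start at $2N+1$. I expect to handle this using positivity of all terms, so that extending or truncating the sums goes in the favourable direction.
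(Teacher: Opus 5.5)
Your Steps 1 and 2 are sound. Backward induction in (\ref{b-3}) gives $P^{(n)}_{n-k}>0$. Direct integration gives $A_0^{(k)}=\rho_k^{-\beta}/\Gamma(2-\beta)$, and (\ref{b-4}) at index $k$ gives $P^{(n)}_{n-k}A_0^{(k)}\le 1$. Note, however, that your own computation $P_0^{(n)}=\Gamma(2-\beta)\rho_n^\beta$ shows the strict upper bound in the statement is false for $k=n$. Only the non-strict bound holds there; the inequality is strict for $k<n$. (The paper gives no proof of this lemma; it defers to the works it cites.)

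The genuine gap is in Step 3. You write down the right ingredient, $A^{(j)}_{j-k}\ge (t_j-t_{k-1})^{-\beta}/\Gamma(1-\beta)$, but you never apply it with a specific $k$, and the argument you actually commit to is wrong. In the paper's notation $\omega_1\equiv 1$, so ``$\omega_1(t_n-(\ell-1)\tau)\le C$'' and ``$w(t_n)\le K\tau$'' do not fit together. Your appeal to exactness of $L1$ on linear functions suggests you mean the ramp $w(t)=(t-(\ell-1)\tau)_+$. For that $w$, $\sum_{j\le k}A^{(k)}_{k-j}\nabla_\tau w^j=\omega_{2-\beta}(t_k-(\ell-1)\tau)$ exactly. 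Since $\omega_{2-\beta}(s)/\omega_{1-\beta}(s)=s/(1-\beta)$, your claimed comparison $\ge\omega_{1-\beta}(t_k-(\ell-1)\tau)$ fails by a factor $\simeq\rho$ at $k=2\ell N+1$. Those nodes next to the shifted singularity are exactly the ones the lemma is about. The weight $(t-(\ell-1)\tau)^{-\beta}$ is the $L1$ image of a unit \emph{jump}, not a ramp.

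The missing step is to apply (\ref{b-4}) directly with $k=2\ell N+1$, which is admissible since $2N+1\le 2\ell N+1\le n$. Because $t_{2\ell N}=(\ell-1)\tau$ and $z\mapsto(t_j-z)^{-\beta}$ is increasing on $[t_{2\ell N},t_{2\ell N+1}]$, you get $A^{(j)}_{j-2\ell N-1}\ge (t_j-(\ell-1)\tau)^{-\beta}/\Gamma(1-\beta)$, hence
\[
\sum_{j=2\ell N+1}^{n}P^{(n)}_{n-j}\big(t_j-(\ell-1)\tau\big)^{-\beta}\le\Gamma(1-\beta)\sum_{j=2\ell N+1}^{n}P^{(n)}_{n-j}A^{(j)}_{j-2\ell N-1}=\Gamma(1-\beta).
\]
Equivalently, your comparison-function idea works with the step $w^j=0$ for $j\le 2\ell N$ and $w^j=1$ for $j>2\ell N$. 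Its $L1$ derivative at $t_k$ is exactly $A^{(k)}_{k-2\ell N-1}$, and the collapse via (\ref{b-4}) gives $w^n-w^{2N}=1$. No extension or truncation of sums is needed, so the obstacle you anticipate does not arise. This is the shifted form of the standard estimate $\sum_j P^{(n)}_{n-j}\omega_{1-\beta}(t_j)\le 1$ for nonuniform $L1$ kernels used in the cited references.
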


\begin{lemma}\label{lem-r}
Let $\alpha\in(0,1).$ Denote $\mathcal{R}_u^n=\partial_t^{\beta}u^n-\bar\partial_t^{\beta}u^n$ and $\mathcal{R}_v^n=\partial_t^{\beta}v^n-\bar\partial_t^{\beta}v^n$. Under the regularity assumptions {\rm(\ref{regularity})} and {\rm(\ref{regularity_v})}, it holds that
$$\|\nabla_h\mathcal{R}_u^n\|\leq C\Big(\rho/\big(t_n-(\ell-1)\tau\big)\Big)^{\min\{\alpha+1, (2-\beta)/\gamma\}}$$
$$|\mathcal{R}_v^n|\leq C\Big(\rho/\big(t_n-(\ell-1)\tau\big)\Big)^{\min\{\beta+1,(2-\beta)/\gamma\}}$$
and
$$\max\limits_{2N+1\leq k\leq n}\Big(\sum_{p=1}^{\ell-1}\sum_{j=2pN+1}^{2(p+1)N}P_{k-j}^{(k)}\|\nabla_h\mathcal{R}_u^j\|
+\sum_{j=2\ell N+1}^{n}P_{k-j}^{(k)}\|\nabla_h\mathcal{R}_u^j\|\Big)
\leq CN^{-\min\{\gamma\alpha,2-\beta\}}$$
$$\max\limits_{2N+1\leq k\leq n}\Big(\sum_{p=1}^{\ell-1}\sum_{j=2pN+1}^{2(p+1)N}P_{k-j}^{(k)}|\mathcal{R}_v^j|
+\sum_{j=2\ell N+1}^{n}P_{k-j}^{(k)}|\mathcal{R}_v^j|\Big)
\leq CN^{-\min\{\gamma\beta,2-\beta\}}$$
for $2\ell N+1\leq n\leq2(\ell+1)N,~1\leq \ell\leq K$.
\end{lemma}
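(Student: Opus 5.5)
The plan is the standard truncation-error analysis for the $L1$ formula on graded meshes (as in Stynes et al. and Liao et al.), carried out interval by interval on the delay intervals $(k\tau,(k+1)\tau]$. We then combine the pointwise bounds with Lemma \ref{P}. We treat $\mathcal{R}_v^n$ in detail; $\nabla_h\mathcal{R}_u^n$ is identical once we note that $\nabla u$ inherits the regularity (\ref{regularity}) with exponent $(k+1)\alpha-1$. That is why $\alpha+1$ and $\gamma\alpha$ replace $\beta+1$ and $\gamma\beta$ there.

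\textbf{Pointwise bounds.} Write
\[
\mathcal{R}_v^n=\sum_{k=2N+1}^n\int_{t_{k-1}}^{t_k}\omega_{1-\beta}(t_n-z)\big(v'(z)-\delta_k v\big)\,dz ,
\qquad \delta_k v=\frac{v(t_k)-v(t_{k-1})}{\rho_k}.
\]
Integrate by parts on each cell to move the derivative onto the kernel, which gives
\[
\mathcal{R}_v^n=-\beta\sum_k\int_{t_{k-1}}^{t_k}\omega_{1-\beta}(t_n-z)(t_n-z)^{-1}\,\big(v(z)-\Pi_{1,k}v(z)\big)\,dz ,
\]
plus the harmless endpoint term from the last cell, where $\Pi_{1,k}v$ is the linear interpolant on $[t_{k-1},t_k]$.

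\textbf{Interior cells.} On a cell away from the singular point $(\ell-1)\tau$ of the current interval, we use $|v-\Pi_{1,k}v|\le C\rho_k^2\max|v''|$. Here $|v''|\le C(t-(p-1)\tau)^{p\beta-2}$ on the $p$-th interval; this is the natural companion of (\ref{regularity_v}), which we assume as well. We then apply the mesh property $\rho_k\le C\rho^{1/\gamma}(t_k-(\ell-1)\tau)^{1-1/\gamma}$ from (\ref{t-2}). Summing over $k$ separately for $t_k\le(t_n+(\ell-1)\tau)/2$ and for $t_k$ near $t_n$ gives the bound $(\rho/(t_n-(\ell-1)\tau))^{(2-\beta)/\gamma}$.

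\textbf{First cells after each singular point.} On the first cell $[t_{2pN},t_{2pN+1}]$ after each singular point we bound $|v-\Pi_1 v|$ directly by $\int|v'|\le C\rho^{p\beta}$. For $p=\ell$ this produces the term $(\rho/(t_n-(\ell-1)\tau))^{\beta+1}$, via the kernel factor $(t_n-z)^{-\beta-1}$. For earlier intervals $p<\ell$, the singularity is milder ($p\beta$ with $p\ge1$ but at distance at least $\tau$), so those contributions are bounded by $C\rho^{\min\{1,\dots\}}$. This is absorbed into the same expression because $t_n-(\ell-1)\tau\le C$.

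\textbf{Weighted sums.} Insert the pointwise bound into $\sum_j P^{(k)}_{k-j}|\mathcal{R}_v^j|$. For the $(2-\beta)/\gamma$ part, use $(\rho/s)^{(2-\beta)/\gamma}\le\rho^{(2-\beta)/\gamma}s^{-\beta}\cdot s^{\beta-(2-\beta)/\gamma}$ with $s=t_j-(\ell-1)\tau$. Two cases arise:

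\begin{itemize}
\item If $\gamma\beta\ge 2-\beta$, the extra power is bounded, and the second statement of Lemma \ref{P} gives $C\rho^{(2-\beta)/\gamma}=CN^{-(2-\beta)}$.
\item If $\gamma\beta<2-\beta$, bound $s^{\beta-(2-\beta)/\gamma}\le \rho^{\beta-(2-\beta)/\gamma}$ (since $s\ge\rho$). This gives $C\rho^{\beta}=CN^{-\gamma\beta}$.
\end{itemize}

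For the $(\rho/s)^{\beta+1}$ part, write it as $\rho^\beta s^{-\beta}\cdot(\rho/s)\le\rho^\beta s^{-\beta}$ and again apply Lemma \ref{P}. The sums over previous intervals $p=1,\dots,\ell-1$ are handled in the same way, with the index $\ell$ replaced by $p$ and $(t_j-(p-1)\tau)^{-\beta}$, using the lemma's bound for each $p$. Since $K$ is fixed, the finitely many contributions add up to $CN^{-\min\{\gamma\beta,2-\beta\}}$; the $u$ case gives $CN^{-\min\{\gamma\alpha,2-\beta\}}$.

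The main obstacle is the pointwise estimate near $t_n$ when $t_n$ lies in a later interval. There, the singular behaviour inherited from all earlier delay points must be controlled uniformly. In particular, the cells straddling each point $p\tau$ with $p<\ell-1$ need care, because on those cells the kernel is not small. We will handle these first using the one-sided integral bound $\int|v'|$ on each graded block, which is precisely where the grading near every $p\tau$ is needed.
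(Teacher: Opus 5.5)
\textbf{Verdict: genuine gap, in the $u$ part.} Your route is the standard graded-mesh $L1$ truncation analysis that the paper defers to: integrate by parts onto the kernel, split the first cell from the rest, use the graded-mesh property, then apply Lemma \ref{P}. For $\mathcal{R}_v^n$ and its weighted sum this is essentially correct. You are also right that bounds on $v''$ and $\partial_t^2\nabla u$ must be assumed beyond (\ref{regularity})--(\ref{regularity_v}).

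The gap is the claim that the $u$ case is ``identical''. For $u$ the regularity exponent is $\alpha=2\beta$, but the operator still has order $\beta$. So the kernel is $(t_n-z)^{-\beta-1}$, and Lemma \ref{P} only controls $\sum_j P^{(k)}_{k-j}s_j^{-\beta}$, with $s_j=t_j-(\ell-1)\tau$. The analogue with $s_j^{-\alpha}$ is false: its single term $j=k=2\ell N+1$ equals $P_0^{(k)}\rho^{-\alpha}=\Gamma(2-\beta)\rho^{-\beta}$.

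Consequently the first display of the lemma cannot serve as input for the third. Feeding $\|\nabla_h\mathcal{R}_u^j\|\le C(\rho/s_j)^{\min\{\alpha+1,(2-\beta)/\gamma\}}$ into the $P$-sum, the best you get for the term $j=k=2\ell N+1$ is $\Gamma(2-\beta)\rho^{\beta}\cdot C\simeq N^{-\gamma\beta}$. This exceeds $N^{-\min\{\gamma\alpha,2-\beta\}}$ whenever $\gamma\beta<2-\beta$. Running your $v$ argument with the correct weight $s^{-\beta}$ therefore gives only $CN^{-\min\{\gamma\beta,2-\beta\}}$ for the $u$-sum. That is enough for Theorem \ref{thm-conv-1}, but it is not the bound the lemma states.

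\textbf{How to close it.} You need a sharper pointwise estimate that keeps the factor $s_n^{\alpha-\beta}$ coming from the mismatch between $\alpha$ and $\beta$.
\begin{itemize}
\item The first cell after $(\ell-1)\tau$ contributes $\lesssim\rho^{\alpha+1}s_n^{-\beta-1}$, not $(\rho/s_n)^{\alpha+1}$.
\item The cells near $t_n$ contribute $\lesssim\rho_n^{2-\beta}s_n^{\ell\alpha-2}\lesssim s_n^{\alpha-\beta}(\rho/s_n)^{(2-\beta)/\gamma}$.
\item The intermediate cells are of one of these two types, up to a logarithm in a borderline case.
\end{itemize}
Then $\rho^{\alpha+1}s^{-\beta-1}\le\rho^{\alpha}s^{-\beta}$, and $s^{\alpha-\beta}(\rho/s)^{(2-\beta)/\gamma}\le Cs^{-\beta}\rho^{\min\{\alpha,(2-\beta)/\gamma\}}$ for $\rho\le s\lesssim 1$. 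Lemma \ref{P} now yields $CN^{-\min\{\gamma\alpha,2-\beta\}}$. The first display also follows from this sharper bound, since $s_n\le\tau$.

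\textbf{A smaller gap in the history terms.} The delicate cells are those just to the left of $(\ell-1)\tau$, where $t_n-z$ can be as small as $\rho$. The cells near $p\tau$ with $p<\ell-1$ are not the problem, contrary to what you write: they lie at distance $>\tau$ from $t_n$, so the kernel is bounded there.
\begin{itemize}
\item Property (\ref{t-2}) says nothing about the cells left of $(\ell-1)\tau$. Use instead the mirrored grading of (\ref{b-1}), $\rho_k\lesssim\rho^{1/\gamma}((\ell-1)\tau-t_{k-1})^{1-1/\gamma}$.
\item Combine this with second-order interpolation and the boundedness of $v''$ as $t\to(\ell-1)\tau^-$. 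This gives a contribution $\lesssim(\rho/s_n)^{(2-\beta)/\gamma}$.
\item The first-order bound through $\int|v'|$ that you propose is usable only on the single first cell after a singular point. On a whole graded block it yields only $O(\max_k\rho_k)=O(N^{-1})$.
\end{itemize}
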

The above lemmas are similar to (\cite{OuCNSNS,Delay01-Tan}).
\begin{lemma}{\rm(\cite{SunZ})}\label{L-3}
 If $f(x)\in C^4[x_{i-1},x_{i+1}],~1\leq i \leq M_1-1$, then it holds that
\begin{align*}
 f''(x_i)=\frac{1}{h_1^2}[f(x_{i-1})-2f(x_i)+f(x_{i+1})]
 -\frac{h_1^2}{12}f^{(4)}(\xi_i),
\end{align*}
where $\xi_i\in(x_{i-1},x_{i+1})$.
\end{lemma}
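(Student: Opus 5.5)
This is the standard second-order central difference estimate, and I will prove it by Taylor expansion of $f$ about $x_i$ with a Lagrange-type remainder, followed by an intermediate value argument to merge the two remainders into one.

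First I expand $f(x_{i+1})=f(x_i+h_1)$ and $f(x_{i-1})=f(x_i-h_1)$ to third order, with fourth-order remainders:
\begin{align*}
f(x_i\pm h_1)=f(x_i)\pm h_1f'(x_i)+\frac{h_1^2}{2}f''(x_i)\pm\frac{h_1^3}{6}f'''(x_i)+\frac{h_1^4}{24}f^{(4)}(\eta_i^{\pm}),
\end{align*}
where $\eta_i^{+}\in(x_i,x_{i+1})$ and $\eta_i^{-}\in(x_{i-1},x_i)$. These expansions are valid because $f\in C^4[x_{i-1},x_{i+1}]$.

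Adding the two expansions cancels the odd-order terms. This gives
\[
f(x_{i-1})-2f(x_i)+f(x_{i+1})=h_1^2f''(x_i)+\frac{h_1^4}{12}\cdot\frac{f^{(4)}(\eta_i^+)+f^{(4)}(\eta_i^-)}{2}.
\]

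The only step needing care is combining the two remainders into a single value $f^{(4)}(\xi_i)$. Since $f^{(4)}$ is continuous on $[\eta_i^-,\eta_i^+]$, the average $\frac12\big(f^{(4)}(\eta_i^+)+f^{(4)}(\eta_i^-)\big)$ lies between $\min f^{(4)}$ and $\max f^{(4)}$ on that interval. By the intermediate value theorem it equals $f^{(4)}(\xi_i)$ for some $\xi_i\in[\eta_i^-,\eta_i^+]\subset(x_{i-1},x_{i+1})$.

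As an alternative I would use the integral form of the remainder. Summing the two integral remainders gives $\frac{h_1^4}{6}\int_0^1(1-s)^3\big[f^{(4)}(x_i+sh_1)+f^{(4)}(x_i-sh_1)\big]\,ds$. The weight is positive, so the weighted mean value theorem for integrals applied to the continuous function $g(s)=\frac12\big[f^{(4)}(x_i+sh_1)+f^{(4)}(x_i-sh_1)\big]$ yields $\frac{h_1^4}{12}g(s^*)$ for some $s^*\in(0,1)$. Applying the intermediate value theorem once more to $f^{(4)}$ on $[x_i-s^*h_1,x_i+s^*h_1]$ gives a point $\xi_i\in(x_{i-1},x_{i+1})$ with $g(s^*)=f^{(4)}(\xi_i)$.

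Finally, dividing by $h_1^2$ and rearranging gives
\[
f''(x_i)=\frac{1}{h_1^2}\big[f(x_{i-1})-2f(x_i)+f(x_{i+1})\big]-\frac{h_1^2}{12}f^{(4)}(\xi_i),
\]
which is the identity stated in Lemma \ref{L-3}.
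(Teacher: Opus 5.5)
Your proof is correct and is the standard argument: two Taylor expansions with fourth-order remainders, cancellation of the odd terms, and an intermediate value step placing $\xi_i\in[\eta_i^-,\eta_i^+]\subset(x_{i-1},x_{i+1})$; the paper gives no proof of its own and simply cites this textbook fact from \cite{SunZ}, where it is proved by the same Taylor expansion. In your integral-remainder variant, the weighted mean value theorem by itself only gives $s^*\in[0,1]$, and getting $s^*\in(0,1)$ (which keeps $\xi_i$ in the open interval) needs the usual refinement using $(1-s)^3>0$ on $(0,1)$, but your first route already proves the lemma without it.
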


Some important lemmas are introduced as following.
\begin{lemma}\label{FD-ineq}
{\rm(\cite{LiaoH2018L1})} For $v^n$, $1\leq n\leq N$, one has
\begin{align*}
(\bar\partial_t^{\beta} v^{n},v^{n})\geq\frac{1}{2}\sum\limits_{k=2N+1}^{n}A_{n-k}^{(n)}\nabla_{\tau}
(\|v^k\|^2),\quad 2N+1\leq n\leq 2(K+1)N.
\end{align*}
\end{lemma}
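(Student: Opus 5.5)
The plan is to prove the inequality pointwise on the grid and then sum over grid nodes with the weights of the discrete inner product. The sum in $(\bar\partial_t^{\beta} v^{n},v^{n})$ starts at $k=2N+1$. We therefore regard $v^{2N}$ as the initial value and write $\nabla_\tau v^k=v^k-v^{k-1}$. By (\ref{dd-1}) and summation by parts, the L1 operator becomes
\begin{align*}
\bar\partial_t^{\beta} v^{n}=A_0^{(n)}v^n-\sum_{k=2N+1}^{n-1}\big(A_{n-k-1}^{(n)}-A_{n-k}^{(n)}\big)v^k-A_{n-2N-1}^{(n)}v^{2N}.
\end{align*}
This form makes the role of the kernel differences explicit.

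The key step uses the monotonicity $A_0^{(n)}\ge A_1^{(n)}\ge\cdots>0$ stated after (\ref{dd-1}). All coefficients $A_{n-k-1}^{(n)}-A_{n-k}^{(n)}$ and $A_{n-2N-1}^{(n)}$ are nonnegative. Moreover, they sum exactly to $A_0^{(n)}$, because the sum telescopes.

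Multiply the displayed identity by $v^n$ at a fixed grid node. Apply Young's inequality $v^kv^n\le\frac12\big((v^k)^2+(v^n)^2\big)$ to each negative term; the nonnegativity of the coefficients is what allows this. This yields
\begin{align*}
\big(\bar\partial_t^{\beta} v^{n}\big)v^n\ge \frac12\Big[A_0^{(n)}(v^n)^2-\sum_{k=2N+1}^{n-1}\big(A_{n-k-1}^{(n)}-A_{n-k}^{(n)}\big)(v^k)^2-A_{n-2N-1}^{(n)}(v^{2N})^2\Big].
\end{align*}
Here the two copies of $\frac12 A_0^{(n)}(v^n)^2$ combine, using that the coefficients sum to $A_0^{(n)}$.

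The bracket is exactly the same Abel rearrangement as above, now applied to the sequence $(v^k)^2$. Hence it equals $\sum_{k=2N+1}^nA_{n-k}^{(n)}\nabla_\tau\big((v^k)^2\big)$.

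Finally, multiply by $h_1h_2$ and sum over the interior nodes. Since $\nabla_\tau$ and the spatial sum commute, this gives $\nabla_\tau(\|v^k\|^2)$ and the stated inequality.

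The main obstacle is not analytic; it is ensuring that the sign and monotonicity properties of $A^{(n)}_{n-k}$ hold on the nonuniform mesh (\ref{b-1}). These properties follow because $A_{n-k}^{(n)}$ is the average over $[t_{k-1},t_k]$ of the decreasing function $(t_n-z)^{-\beta}$. Averages over intervals further from $t_n$ are therefore smaller, regardless of the step ratios, so no mesh condition is needed. This is the property the paper already asserts, and it is where the result of \cite{LiaoH2018L1} applies.
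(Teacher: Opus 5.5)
Your proof is correct and is essentially the standard argument behind the result of \cite{LiaoH2018L1}; the paper itself gives no proof beyond that citation. The ingredients are Abel summation, nonnegativity of the telescoping coefficients $A^{(n)}_{n-k-1}-A^{(n)}_{n-k}$ from kernel monotonicity, and Young's inequality. The last step amounts to discarding the nonnegative remainders $\tfrac12\big(A^{(n)}_{n-k-1}-A^{(n)}_{n-k}\big)(v^k-v^n)^2$ and $\tfrac12 A^{(n)}_{n-2N-1}(v^{2N}-v^n)^2$. One minor wording slip: $(t_n-z)^{-\beta}$ is increasing in $z$, i.e.\ decreasing in the distance $t_n-z$. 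Your conclusion that averages over earlier intervals are smaller, with no step-ratio condition needed, is nonetheless right.
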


We also need a discrete fractional Gr\"{o}nwall inequality which is a slightly modified version of {\rm (\cite{Delay01-Tan})}.

\begin{lemma}\label{grownwall}
Let $\{g^n\}_{n=2N+1}^{2(K+1)N}$ and $\{\lambda_n\}_{n=0}^{2KN-1}$ be given nonnegative sequences. Assume that $\lambda$ is a positive constant such that $\lambda\geq\sum_{n=0}^{2KN-1}\lambda_n$ and the maximum time-step size satisfies
$$\max\limits_{2N+1\leq n\leq2(K+1)N}\rho_n\leq \frac{1}{\sqrt[\beta]{4\Gamma{(2-\beta)}\lambda}}.$$
Then, for any nonnegative sequence $\{u^k\}_{k=2N}^{2(K+1)N}$ and $\{v^k\}_{k=2N}^{2(K+1)N}$ such that
$$\sum_{k=2N+1}^{n}A_{n-k}^{(n)}\nabla_\tau\big[(u^k)^2+(v^k)^2\big]\leq \sum_{k=2N+1}^{n}\lambda_{n-k}\big(u^{k}+v^{k}\big)^2+\big(u^{n}+v^{n}\big)g^{n},~~2N+1\leq n\leq 2(K+1)N,$$
it holds that
$$u^n+v^n\leq 4E_{\beta,1}\big(4\max\{1,\varrho\}\lambda t_n^{\beta}\big)\Big(u^{2N}+v^{2N}+\max\limits_{2N+1\leq k\leq n}\sum_{j=2N+1}^{k}P_{k-j}^{(k)}g^{j}\Big),~~2N+1\leq n\leq 2(K+1)N,$$
where $E_{\beta,1}(z)=\sum_{k=0}^{\infty}\frac{z^k}{\Gamma{(\beta k+1)}}$.
\end{lemma}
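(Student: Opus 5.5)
We follow the standard route for discrete fractional Gr\"onwall inequalities of Liao et al., adapted to the shifted index range $n\ge 2N+1$ and to the two sequences $u^k,v^k$. Set $W^k:=(u^k)^2+(v^k)^2$ and $S^k:=u^k+v^k$, so that $W^k\le (S^k)^2\le 2W^k$. The hypothesis then reads
\begin{align*}
\sum_{k=2N+1}^{n}A_{n-k}^{(n)}\nabla_\tau W^k\le \sum_{k=2N+1}^{n}\lambda_{n-k}(S^k)^2+S^n g^n .
\end{align*}

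\textbf{Step 1 (inversion of the $L1$ operator).} Multiply the inequality at level $j$ by $P_{n-j}^{(n)}$, sum over $j=2N+1,\dots,n$, exchange the order of summation, and apply the identity (\ref{b-4}). This gives
\begin{align*}
W^n\le W^{2N}+\sum_{j=2N+1}^{n}P_{n-j}^{(n)}\sum_{k=2N+1}^{j}\lambda_{j-k}(S^k)^2+\sum_{j=2N+1}^{n}P_{n-j}^{(n)}S^jg^j .
\end{align*}

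\textbf{Step 2 (linearization).} Choose $n_0\le n$ with $S^{n_0}=\max_{2N\le k\le n}S^k$. Write the inequality of Step 1 at level $n_0$, and bound $S^j\le S^{n_0}$ in the last sum. Using $(S^{n_0})^2\le 2W^{n_0}$ and $S^{2N}\ge \sqrt{W^{2N}}$, we divide by $S^{n_0}$ and obtain a linear inequality of the form
\begin{align*}
S^{n}\le S^{n_0}\le C\Big(S^{2N}+\max_{2N+1\le k\le n}\sum_{j=2N+1}^{k}P_{k-j}^{(k)}g^j\Big)+C\sum_{j=2N+1}^{n}P_{n-j}^{(n)}\sum_{k=2N+1}^{j}\lambda_{j-k}S^k .
\end{align*}
The factor $4$ in the conclusion is meant to absorb these constants. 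Alternatively, one can proceed directly, as Liao et al.\ do, by showing that $W^n\le S^n(\dots)$ via induction.

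\textbf{Step 3 (Mittag-Leffler majorant).} Define the comparison sequence $E^n:=E_{\beta,1}\big(4\max\{1,\varrho\}\lambda t_n^\beta\big)$. The key property to establish is
\begin{align*}
\sum_{j=2N+1}^{n}P_{n-j}^{(n)}\sum_{k=2N+1}^{j}\lambda_{j-k}E^{k}\le \frac{1}{2}E^n \qquad\text{(up to the usual constant).}
\end{align*}
This follows from three ingredients: the bound $P_{n-j}^{(n)}\le\Gamma(2-\beta)\rho_j^\beta$ from Lemma \ref{P}; the fact that $\sum_j P_{n-j}^{(n)}\omega_{1+m\beta}(t_j)\le\omega_{1+(m+1)\beta}(t_n)$, obtained through (\ref{b-4}) and the monotonicity of the kernel, where the step-ratio bound $\varrho$ enters when $\lambda_{j-k}$ is paired with a shifted index; and the definition of $E_{\beta,1}$ as a series in $\omega_{1+m\beta}$. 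The step-size restriction $\rho_n\le (4\Gamma(2-\beta)\lambda)^{-1/\beta}$ guarantees that the diagonal term $P_0^{(n)}\lambda_0\le\Gamma(2-\beta)\rho_n^\beta\lambda\le 1/4$. Hence the diagonal term can be moved to the left-hand side, which removes the implicit dependence on $S^n$.

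\textbf{Step 4 (induction).} Finally, argue by induction on $n$ that $S^n\le 4E^n\cdot(\text{bracket})$, using Steps 2 and 3.

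The main obstacle is Step 3: proving the convolution inequality for the Mittag-Leffler series on a nonuniform mesh that starts at index $2N$ and crosses the delay points. There the step-ratio quantity $\varrho$ must be handled carefully, since the mesh is refined at each $\ell\tau$ and the ratios $\rho_n/\rho_{n+1}$ can be large at those points. We would first try to adapt Liao et al.'s proof verbatim, replacing $t_n$ by $t_n-t_{2N}=t_n$ and tracking where $\max\{1,\varrho\}$ is needed.
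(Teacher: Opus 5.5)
Your outline is the standard Liao et al.\ argument that the paper relies on through its citation of Tan et al. Steps 1--2 are sound apart from one slip: the linear inequality you get in Step 2 carries the weights $P^{(n_0)}_{n_0-j}$ at level $n_0$, not $P^{(n)}_{n-j}$. The induction therefore has to handle $n_0<n$ separately, using the induction hypothesis plus the monotonicity in $n$ of $E_{\beta,1}(\mu t_n^\beta)$ and of the bracket.

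The genuine gap is Step 3. It carries the whole content of the lemma, you leave it open, and the one concrete ingredient you cite for it is false. At $n=2N+1$ the sum $\sum_j P^{(n)}_{n-j}\omega_{1+m\beta}(t_j)$ is the single term $P_0^{(2N+1)}\omega_{1+m\beta}(t_{2N+1})$, with $P_0^{(2N+1)}=\Gamma(2-\beta)\rho^\beta$ and $t_{2N+1}=\rho$. Hence
\[ \frac{P_0^{(2N+1)}\,\omega_{1+m\beta}(t_{2N+1})}{\omega_{1+(m+1)\beta}(t_{2N+1})}=\frac{\Gamma(2-\beta)\,\Gamma(1+(m+1)\beta)}{\Gamma(1+m\beta)}, \]
which grows like $(m\beta)^\beta$ and exceeds $1$ for large $m$. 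The Mittag-Leffler series needs every $m$, so this cannot be summed, even with a $\varrho$-dependent constant. The derivation via (\ref{b-4}) that you have in mind only works while $\omega_{1+(m+1)\beta}$ is concave, i.e.\ $(m+1)\beta\le 1$.

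The obstruction is right-endpoint sampling. What does hold on an arbitrary mesh is the left-endpoint comparison
\[ \sum_{j=2N+1}^{n}P^{(n)}_{n-j}\phi(t_{j-1})\le\int_0^{t_n}\omega_\beta(t_n-s)\phi(s)\,ds \]
for nonnegative nondecreasing $\phi$. Writing $\phi$ as a sum of nonnegative jumps, this reduces to $\sum_{j>i}P^{(n)}_{n-j}\le\omega_{1+\beta}(t_n-t_i)$. That inequality follows from (\ref{b-4}), because the $L1$ quotient of the concave function $\omega_{1+\beta}((t-t_i)_+)$ is $0$ for $j\le i$ and at least $1$ for $j>i$.

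A correct Step 3 therefore runs as follows.
\begin{enumerate}
\item Remove the diagonal term $k=j=n$ before any comparison with $E_{\beta,1}$. The step-size restriction, not the series, is what handles it.
\item After the induction hypothesis, the sequence $E^k=E_{\beta,1}(\mu t_k^\beta)$ is sampled only at $t_k$ with $k\le n-1$. For $j=n$ the samples are at most $E^{n-1}$, a left endpoint. The remaining right-endpoint samples are $E^j$ with $j\le n-1$, and each is the left endpoint of cell $j+1$.
\item Shifting these samples to the next cell costs a factor depending only on the step-ratio bound. For instance, $\sum_{j\ge i}P^{(n)}_{n-j}\le\omega_{1+\beta}(t_n-t_{i-1})\le(1+\rho_i/\rho_{i+1})^\beta\,\omega_{1+\beta}(t_n-t_i)$ for $i\le n-1$. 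This is the role of the factor $\max\{1,\varrho\}$ in the statement. Without a ratio bound it genuinely fails: if $\rho_{2N+1}\gg\rho_{2N+2}$ then $P_1^{(2N+2)}\approx\Gamma(2-\beta)\rho^\beta$, and the same blow-up reappears.
\item Close the induction with $\int_0^{t_n}\omega_\beta(t_n-s)E_{\beta,1}(\mu s^\beta)\,ds=(E_{\beta,1}(\mu t_n^\beta)-1)/\mu$ and $\mu=4\max\{1,\varrho\}\lambda$, together with the factor $4$ you correctly obtained in Step 2.
\end{enumerate}
The factor $\varrho$ has nothing to do with the delay points or with pairing $\lambda_{j-k}$ with shifted indices. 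The lemma involves only the $L1$ operator on the mesh starting at $t_{2N}=0$, with the delay term already placed inside $g^n$.
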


Next, the numerical schemes and corresponding convergence analysis will be given.

\subsection{Convergence of the 1D case }
The corresponding numerical scheme (SYM $L1$ scheme) of problems (\ref{eq-gov-trans}) is as follows:
\begin{equation}\label{num-scheme-1}
\begin{cases}
  \bar\partial_t^{\beta} V_h^{n} - \Delta_h U_h^{n} +U_h^{n-2N}= f_h^n, &\textbf{x}_h\in\Omega_h,~ 2N+1\leq n \leq 2(K+1)N, \\
  V_h^{n}  = \bar\partial_t^{\beta} U_h^{n}, & \textbf{x}_h\in\Omega_h,~2N+1\leq n \leq 2(K+1)N, \\
  U_h^n = a_0(\textbf{x}_h,t_n),~ V_h^0=0, & \textbf{x}_h\in \bar{\Omega}_h, ~1\leq n \leq 2N,  \\
  U_h^n = V_h^n=0, & \textbf{x}_h \in \partial \Omega_h,~~2N+1\leq n \leq 2(K+1)N,
\end{cases}
\end{equation}
where $U_h^n$ and $V_h^n$ are numerical solutions corresponding to $u(\textbf{x}_h,t_n)$ and $v(\textbf{x}_h,t_n)$ in (\ref{eq-gov-trans}).
Next, the convergence analysis
 of scheme (\ref{num-scheme-1}) will be given. Let $u_h^n=u(\textbf{x}_h,t_n)$ and $v_h^n=v(\textbf{x}_h,t_n)$. Denote the solution errors $\bar u_h^n:=u_h^n-U_h^n$ and $\bar v_h^n:=v_h^n-V_h^n$. One has the error system of scheme (\ref{num-scheme-1}):
\begin{equation}\label{error-system-1}
\begin{cases}   \bar\partial_t^{\beta} \bar v_h^{n} - \Delta_h \bar u_h^{n} + \bar u_h^{n-2N}= \mathcal{R}_v^{n}+R_s^n, &\textbf{x}_h\in\Omega_h,~ 2N+1\leq n \leq 2(K+1)N, \\
\bar v_h^{n}  = \bar\partial_t^{\beta} \bar u_h^{n} + \mathcal{R}_u^{n}, & \textbf{x}_h\in\Omega_h,~2N+1\leq n \leq 2(K+1)N, \\
\bar u_h^n = \bar v_h^0=0, & \textbf{x}_h\in \bar{\Omega}_h, ~1\leq n \leq 2N,\\
\bar u_h^n = \bar v_h^n=0, &\textbf{x}_h\in \partial\Omega_h ,~2N+1\leq n \leq 2(K+1)N.
\end{cases}
\end{equation}
The error estimate of $\mathcal{R}_u^{n}$ and $\mathcal{R}_v^{n}$ can be derived by Lemma \ref{lem-r}, and $R_s=\mathcal{O}(h_1^2)$.

\begin{theorem}\label{thm-conv-1}
The numerical scheme {\rm(\ref{error-system-1})} is unconditional convergent with
\[
\|\bar v_h^n\|+\|\nabla_h \bar u_h^n\| \le
C(N^{-\min\{\gamma\beta,2-\beta\}}+N^{-\min\{\gamma\alpha,2-\beta\}}+h_1^2)
\]
for $2N+1\leq n \leq 2(K+1)N$.
\end{theorem}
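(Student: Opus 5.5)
We mimic the continuous stability argument at the discrete level. We take the discrete inner product of the first equation of (\ref{error-system-1}) with $\bar v_h^n$. We take the inner product of the second equation with $-\Delta_h \bar u_h^n$ and sum by parts, which is allowed since $\bar u_h^n$ vanishes on $\partial\Omega_h$. Adding the two identities cancels the cross terms $-(\Delta_h\bar u_h^n,\bar v_h^n)$ and $(\bar v_h^n,-\Delta_h\bar u_h^n)$. This leaves
\begin{align*}
(\bar\partial_t^{\beta}\bar v_h^n,\bar v_h^n)+(\bar\partial_t^{\beta}\nabla_h\bar u_h^n,\nabla_h\bar u_h^n)
=(\mathcal{R}_v^n+R_s^n,\bar v_h^n)-(\bar u_h^{n-2N},\bar v_h^n)-(\nabla_h\mathcal{R}_u^n,\nabla_h\bar u_h^n).
\end{align*}
We apply Lemma \ref{FD-ineq} to both terms on the left, with $v$ replaced by $\nabla_h\bar u_h$ in the second. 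The left side is then bounded below by $\frac12\sum_{k=2N+1}^nA_{n-k}^{(n)}\nabla_\tau\big(\|\bar v_h^k\|^2+\|\nabla_h\bar u_h^k\|^2\big)$.

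On the right we use Cauchy--Schwarz. For the delay term we use the discrete Poincar\'e inequality $\|\bar u_h^{n-2N}\|\le C\|\nabla_h\bar u_h^{n-2N}\|$. If $n-2N\le 2N$, this term vanishes, because $\bar u_h^m=0$ for $m\le 2N$. So the right side is at most
\[
\big(\|\bar v_h^n\|+\|\nabla_h\bar u_h^n\|\big)\Big(\|\mathcal{R}_v^n\|+\|\nabla_h\mathcal{R}_u^n\|+Ch_1^2+C\|\nabla_h\bar u_h^{n-2N}\|\Big).
\]
Set $u^k=\|\nabla_h\bar u_h^k\|$, $v^k=\|\bar v_h^k\|$, $g^n=2(\|\mathcal{R}_v^n\|+\|\nabla_h\mathcal{R}_u^n\|+Ch_1^2)$, and keep the delay contribution separate.

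The delay term is handled by a method-of-steps induction over the intervals $(\ell\tau,(\ell+1)\tau]$, that is, $2\ell N+1\le n\le 2(\ell+1)N$. On the first interval the delay term is zero. We then invoke Lemma \ref{grownwall} with $\lambda_k\equiv0$, which removes any step-size restriction and gives unconditional convergence. On later intervals, $\|\nabla_h\bar u_h^{n-2N}\|$ is a known quantity from the previous interval, already bounded by the induction hypothesis. We therefore absorb it into $g^n$. Alternatively, one could place it in a $\lambda$-term, but that would need the step restriction, so absorbing into $g$ is preferable. The Gr\"onwall lemma then gives
\[
u^n+v^n\le C\Big(u^{2N}+v^{2N}+\max_{k}\sum_j P_{k-j}^{(k)}g^j\Big).
\]

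It remains to bound $\max_k\sum_jP^{(k)}_{k-j}g^j$. The truncation-error parts are exactly the sums controlled by Lemma \ref{lem-r}. They give $CN^{-\min\{\gamma\beta,2-\beta\}}$ and $CN^{-\min\{\gamma\alpha,2-\beta\}}$, where the sums must be split across the delay subintervals as in that lemma. For the constant parts, namely $h_1^2$ and the induction bound on the delayed error, we need $\sum_{j}P^{(k)}_{k-j}\le C$ uniformly. This follows from the second estimate of Lemma \ref{P}, since $(t_j-(\ell-1)\tau)^{-\beta}\ge c>0$ on bounded time intervals; equivalently it follows from (\ref{b-4}) with $A^{(j)}_{j-k}\ge c$.

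Iterating over $\ell=1,\dots,K$ multiplies the constant by a factor depending only on $K$ at each step. This yields the stated bound. I expect the main obstacle to be the bookkeeping across the delay intervals. The $P$-weighted sums of $\mathcal{R}^j$ over earlier intervals must be taken with the kernel of the current level $k$, with the singular points $(\ell-1)\tau$ re-centered on each interval, and these sums must not accumulate more than a $K$-dependent constant. Lemma \ref{lem-r} is stated precisely in this split form, so I would rely on it directly.
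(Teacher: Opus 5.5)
Your proposal is correct and follows the paper's argument. The paper uses the same energy identity obtained by pairing the error equations with $\bar v_h^n$ and the discrete Laplacian of $\bar u_h^n$ (with $-\Delta_h\bar u_h^n$ the two cross terms coincide, so one subtracts rather than adds, but your resulting identity is right), then Lemma \ref{FD-ineq}, Cauchy--Schwarz, and Lemmas \ref{lem-r}, \ref{P} and \ref{grownwall}. Where the paper only says the result ``follows from Lemmas \ref{P} and \ref{grownwall},'' your method-of-steps treatment of the delay term spells out what it leaves implicit, namely the Poincar\'e bound, absorption of $\|\nabla_h\bar u_h^{n-2N}\|$ into $g^n$ with $\lambda_k\equiv0$, and $\sum_j P^{(k)}_{k-j}\le C$ from (\ref{b-4}); the paper runs this same interval-by-interval induction explicitly only in the proof of Theorem \ref{thm-error}.
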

\begin{proof}
Acting $\nabla_h$ on the second equation of (\ref{error-system-1}).
Taking the inner product with $\bar v_h^n$ and $-\nabla_h \bar u_h^n$ for the first two equations, respectively. It gives that
\begin{align*}
&(\bar\partial_t^{\beta} \bar v_h^{n},\bar v_h^{n}) - (\Delta_h \bar u_h^{n},\bar v_h^{n}) + (\bar u_h^{n-2N},\bar v_h^{n})= (\mathcal{R}_v^{n}+R_s^n,\bar v_h^{n}), \\
&-(\nabla_h\bar v_h^{n},\nabla_h \bar u_h^{n})
= -(\bar\partial_t^{\beta} \nabla_h\bar u_h^{n},\nabla_h \bar u_h^{n})-(\nabla_h\mathcal{R}_u^{n},\nabla_h \bar u_h^{n}).
\end{align*}
Adding above equations, it comes that
\begin{align*}
(\bar\partial_t^{\beta}  \bar v_h^{n},\bar v_h^{n}) + (\bar\partial_t^{\beta}  \nabla_h \bar u_h^{n},\nabla_h \bar u_h^{n}) + (\bar u_h^{n-2N},\bar v_h^{n}) = (\mathcal{R}_v^{n}+R_s^n,\bar v_h^{n})-(\nabla_h \mathcal{R}_u^{n},\nabla_h \bar u_h^{n}).
\end{align*}
By Lemma \ref{FD-ineq}, one has
\begin{align*}
&\frac12\bar\partial_t^{\beta} \big(\|\bar v_h^n\|^2 + \|\nabla_h \bar u_h^n\|^2\big) \\
\le& \big(\|\mathcal{R}_v^{n}\|+\|R_s^{n}\|+\|\bar u_h^{n-2N}\|\big)\|\bar v_h^{n}\|+\|\nabla_h \mathcal{R}_u^{n}\|\|\nabla_h \bar u_h^{n}\|\\
\le&\big(\|\mathcal{R}_v^{n}\|+\|R_s^{n}\|+\|\bar u_h^{n-2N}\|+\|\nabla_h \mathcal{R}_u^{n-\theta}\|\big)\big(\|\bar v_h^{n}\|+\|\nabla_h \bar u_h^{n}\|\big).
\end{align*}
The desired result follows from Lemmas \ref{P} and \ref{grownwall}.
\end{proof}

\iffalse
\begin{lemma}\label{grownwall}
(\cite{LyuP2022SFOR})Let $(g^n)_{n=1}^N$ and $(\lambda_l)_{l=0}^{N-1}$ be given nonnegative sequences. Assume that there exists a constant $\Lambda$ such that $\Lambda\geq\sum_{l=0}^{N-1}\lambda_l$, and that the maximum step satisfies
$$\max_{1\leq n \leq N}\tau_n\leq \frac{1}{\sqrt[\beta]{4\pi_A\Gamma{(2-\beta)}\Lambda}}.$$
Then, for any nonnegative sequence $(v^k)_{k=0}^N$ and $(w^k)_{k=0}^N$ satisfying
$$\sum_{k=1}^nA_{n-k}^{(n)}\nabla_\tau\big[(v^k)^2+(w^k)^2\big]\leq\sum_{k=1}^n\lambda_{n-k}\big(v^{k-\theta}+w^{k-\theta}\big)^2+(v^{n-\theta}+w^{n-\theta})g^n,~~1\leq n\leq N,$$
it holds that
$$v^n+w^n\leq 4E_\beta(4\pi_A\Lambda t_n^\beta)\bigg(v^0+w^0+\max_{1\leq k\leq n}\sum_{j=1}^kP_{k-j}^{(k)}g^j\bigg),~~1\leq n\leq N,$$
where $P_0^{(n)}:=\frac{1}{A_0^{(n)}}$, $P_{n-j}^{(n)}:=\frac{1}{A_0^{(j)}}\sum_{k=j+1}^n(A_{k-j-1}^{(k)}-A_{k-j}^{(k)})P_{n-k}^{(n)}$, $1\leq j\leq n-1$, $E_\beta(z)=\sum_{k=0}^\infty\frac{z^k}{\Gamma{(1+k\beta)}}$ is the Mittag-Leffler function.
\end{lemma}

\begin{lemma}\label{P}
For the sequence $(P_{n-j}^{(n)})_{j=1}^n$, some properties are given in \cite{LiaoH2018L1}.
\begin{align*}
&\sum_{j=k}^nP_{n-j}^{(n)}A_{j-k}^{(j)}\equiv1,~~1\leq k\leq n,\\
&0\leq P_{n-j}^{(n)}\leq \pi_A\Gamma{(2-\beta)}\tau_j^\beta,~~\sum_{j=1}^nP_{n-j}^{(n)}\omega_{1-\beta}(t_j)\leq C,~~ 1\leq j\leq n\leq N.
\end{align*}
\end{lemma}
\fi

\subsection{Convergence of the 2D case}
Next, we write the $L1$ approximation for problem (\ref{eq-gov-trans}) in $\mathbb{R}^2$ as follows:
\begin{equation}\label{num-scheme-2}
\begin{cases}   \bar\partial_t^{\beta} v_h^{n} - \Delta_h u_h^{n} +u_h^{n-2N}= f_h^n+\mathcal{R}_v^{n}+R_s^n, &\textbf{x}_h\in\Omega_h,~ 2N+1\leq n \leq 2(K+1)N, \\   v_h^{n}  = \bar\partial_t^{\beta} u_h^{n}+\mathcal{R}_u^{n}, & \textbf{x}_h\in\Omega_h,~2N+1\leq n \leq 2(K+1)N, \\   u_h^n = a_0(\textbf{x}_h,t_n),~ v_h^0=0, & \textbf{x}_h\in \bar{\Omega}_h, ~1\leq n \leq 2N,  \\   u_h^n = v_h^n=0, & \textbf{x}_h \in \partial \Omega_h,~~2N+1\leq n \leq 2(K+1)N,
\end{cases}
\end{equation}
In order to get an ADI scheme to improve the computational efficiency, we introduce a weighted auxiliary term
$$\varepsilon_h^n=\mu^3\delta_x^2\delta_y^2u_h^n~~\mbox{with}~\mu=\frac{1}{A_0^{(n)}}.$$
It is easy to check that $A_0^{(n)}=\mathcal{O}(\rho_n^{-\beta})$, thus we have
$$\varepsilon_h^n=\mathcal{O}(\rho_n^{3\beta}).$$

Now we rewrite the Eqs.(\ref{num-scheme-2}) as follows:
\begin{equation}\label{num-scheme-3}
\begin{cases}   \bar\partial_t^{\beta} v_h^{n} - \Delta_h u_h^{n} +u_h^{n-2N}= f_h^n+\mathcal{R}_1^{n}, &\textbf{x}_h\in\Omega_h,~ 2N+1\leq n \leq 2(K+1)N, \\   v_h^{n}  = \bar\partial_t^{\beta} u_h^{n}+\varepsilon_h^n+\mathcal{R}_2^{n}, & \textbf{x}_h\in\Omega_h,~2N+1\leq n \leq 2(K+1)N, \\   u_h^n = a_0(\textbf{x}_h,t_n),~ v_h^0=0, & \textbf{x}_h\in \bar{\Omega}_h, ~1\leq n \leq 2N,  \\   u_h^n = v_h^n=0, & \textbf{x}_h \in \partial \Omega_h,~~2N+1\leq n \leq 2(K+1)N,
\end{cases} \end{equation}
where $\mathcal{R}_1^{n}=\mathcal{R}_v^{n}+R_s^n$ and $\mathcal{R}_2^{n}=\mathcal{R}_u^{n}-\varepsilon_h^n$.
We employ $U_h^n$ and $V_h^n$ to represent the numerical solutions of $u(\textbf{x}_h,t_n)$ and $v(\textbf{x}_h,t_n)$, respectively, and take $$\epsilon_h^n=\mu^3\delta_x^2\delta_y^2U_h^n.$$
Omitting the small terms $\mathcal{R}_1^{n}$ and $\mathcal{R}_2^{n}$ in (\ref{num-scheme-3}), we obtain the SYM $L1$-ADI scheme
\begin{equation}\label{num-scheme-4}
\begin{cases}   \bar\partial_t^{\beta} V_h^{n} - \Delta_h U_h^{n} +U_h^{n-2N}= f_h^n, &\textbf{x}_h\in\Omega_h,~ 2N+1\leq n \leq 2(K+1)N, \\   V_h^{n}  = \bar\partial_t^{\beta} U_h^{n}+\epsilon_h^n, & \textbf{x}_h\in\Omega_h,~2N+1\leq n \leq 2(K+1)N, \\   U_h^n = a_0(\textbf{x}_h,t_n),~ V_h^0=0, & \textbf{x}_h\in \bar{\Omega}_h, ~1\leq n \leq 2N,  \\   U_h^n = V_h^n=0, & \textbf{x}_h \in \partial \Omega_h,~~2N+1\leq n \leq 2(K+1)N.
\end{cases}
\end{equation}

For the numerical scheme (\ref{num-scheme-4}), utilizing the second equality to eliminate \( V_{h}^n \) in the first equality, it can be represented by the following ADI sense:
\begin{align*}
\left( A_0^{(n)} -\mu\delta_x^2 \right) \left( A_0^{(n)} - \mu\delta_y^2 \right) U_{h}^n
&= (V_{\text{his}})_{h}^{n-1} -U_h^{n-2N}+ f_{h}^{n}
+ A_0^{(n)} (U_{\text{his}})_{h}^{n-1},\\
V_{h}^n &=  \partial_t^{\beta} U_{h}^{n} + \epsilon_{h}^{n},
\end{align*}
provided initial and boundary conditions in (\ref{num-scheme-4}), where

\[
(U_{\text{his}})_{h}^{n-1} := \sum_{k=2N+1}^{n-1} (A_{n-k-1}^{(n)} - A_{n-k}^{(n)}) U_{h}^k + A_{n-2N-1}^{(n)}U_{h}^{2N},
\]
\[
(V_{\text{his}})_{h}^{n-1} := \sum_{k=2N+1}^{n-1} (A_{n-k-1}^{(n)} - A_{n-k}^{(n)})V_{h}^k + A_{n-2N-1}^{(n)}V_{h}^{2N}.
\]

 Denote $\bar u_h^n:=u_h^n-U_h^n$ and $\bar v_h^n:=v_h^n-V_h^n$. One has the error system of scheme (\ref{num-scheme-4}):
\begin{equation}\label{error-system-2}
\begin{cases}   \bar\partial_t^{\beta} \bar v_h^{n} - \Delta \bar u_h^{n}+  \bar u_h^{n-2N}= \mathcal{R}_1^{n}, & \textbf{x}_h\in\Omega_h,~ 2N+1\leq n \leq 2(K+1)N,\\
\bar v_h^{n}  = \bar\partial_t^{\beta} \bar u_h^{n} +\mu^3\delta_x^2\delta_y^2\bar u_h^n + \mathcal{R}_2^{n}, & \textbf{x}_h\in\bar\Omega_h,~ 2N+1\leq n \leq 2(K+1)N, \\
\bar u_h^n = \bar v_h^0=0, & \textbf{x}_h\in\Omega_h,~ 1\leq n \leq 2N, \\
\bar u_h^n = \bar v_h^n=0, & \textbf{x}_h\in\partial\Omega_h,~ 2N+1\leq n \leq 2(K+1)N.
\end{cases}
\end{equation}

\begin{theorem}\label{thm-conv-ADI}
The numerical scheme {\rm(\ref{error-system-2})} is unconditional convergent with \[ \|\bar v_h^n\|+\|\nabla_h \bar u_h^n\| \le C(N^{-\min\{\gamma\beta,\frac{3}{2}\alpha\}}+h_1^2+h_2^2) \] for $2N+1\leq n \leq 2(K+1)N$. \end{theorem}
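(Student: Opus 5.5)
Our plan follows the energy argument of Theorem \ref{thm-conv-1}, with the extra ADI perturbation $\mu^3\delta_x^2\delta_y^2\bar u_h^n$ handled by a positivity argument. Apply $\nabla_h$ to the second equation of (\ref{error-system-2}). Then take the discrete inner product of the first equation with $\bar v_h^n$ and of the differentiated second equation with $-\nabla_h\bar u_h^n$, and add. Summation by parts, using the homogeneous boundary values, cancels the cross terms $-(\Delta_h\bar u_h^n,\bar v_h^n)$ and $-(\nabla_h\bar v_h^n,\nabla_h\bar u_h^n)$ exactly as in the 1D proof. This leaves
\begin{align*}
(\bar\partial_t^{\beta}\bar v_h^n,\bar v_h^n)+(\bar\partial_t^{\beta}\nabla_h\bar u_h^n,\nabla_h\bar u_h^n)+\mu^3(\nabla_h\delta_x^2\delta_y^2\bar u_h^n,\nabla_h\bar u_h^n)
=(\mathcal{R}_1^n,\bar v_h^n)-(\bar u_h^{n-2N},\bar v_h^n)-(\nabla_h\mathcal{R}_2^n,\nabla_h\bar u_h^n).
\end{align*}

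The key observation is that the ADI term is nonnegative. Summation by parts gives
$$(\nabla_h\delta_x^2\delta_y^2 w,\nabla_h w)=\|\delta_x\delta_x\delta_y w\|^2+\|\delta_x\delta_y\delta_y w\|^2\ge0$$
for $w\in u_{0h}$, because $\delta_x^2$ and $\delta_y^2$ commute and are symmetric negative semidefinite on grid functions vanishing on the boundary. The identity reduces to $(-\delta_x^2)(-\delta_y^2)$ paired with $-\delta_x^2-\delta_y^2$, and each term is a product of commuting nonnegative operators. We therefore drop this term from the left-hand side. Lemma \ref{FD-ineq} then gives
$$\tfrac12\sum_k A^{(n)}_{n-k}\nabla_\tau\big(\|\bar v_h^k\|^2+\|\nabla_h\bar u_h^k\|^2\big)\le\big(\|\mathcal{R}_1^n\|+\|\bar u_h^{n-2N}\|+\|\nabla_h\mathcal{R}_2^n\|\big)\big(\|\bar v_h^n\|+\|\nabla_h\bar u_h^n\|\big).$$
The delay term $\|\bar u_h^{n-2N}\|$ is either zero (for $n\le 4N$) or bounded by the discrete Poincaré inequality $C\|\nabla_h\bar u_h^{n-2N}\|$. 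That quantity is already controlled from the previous delay interval, so we argue interval by interval by induction on $\ell$. Alternatively, we absorb it through the $\lambda_n$ term of Lemma \ref{grownwall}.

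Lemma \ref{grownwall} then bounds the error by $\max_k\sum_j P^{(k)}_{k-j}\big(\|\mathcal{R}_1^j\|+\|\nabla_h\mathcal{R}_2^j\|\big)$. The contributions $\mathcal{R}_v$ and $\mathcal{R}_u$ give $N^{-\min\{\gamma\beta,2-\beta\}}$ and $N^{-\min\{\gamma\alpha,2-\beta\}}$ by Lemma \ref{lem-r}. The spatial error $R_s$ gives $h_1^2+h_2^2$ via $\sum_j P^{(k)}_{k-j}\le C t_k^\beta$, which follows from (\ref{b-4}) and the bound $A^{(j)}_{j-k}\gtrsim t^{-\beta}$.

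The main obstacle is the consistency term $\varepsilon_h^n=\mu^3\delta_x^2\delta_y^2u_h^n$ inside $\nabla_h\mathcal{R}_2^n$. Since $\mu=1/A_0^{(n)}\le\Gamma(2-\beta)\rho_n^\beta$, and $u$ is smooth in space, we get $\|\nabla_h\varepsilon_h^n\|\le C\rho_n^{3\beta}$. Lemma \ref{P} gives $P^{(k)}_{k-j}\le\Gamma(2-\beta)\rho_j^\beta$, but summing $\rho_j^{4\beta}$ wastes order. Instead we would bound
$$\sum_j P^{(k)}_{k-j}\rho_j^{3\beta}\le\max_j\rho_j^{3\beta}\sum_jP^{(k)}_{k-j}\le C\max_j\rho_j^{3\beta}.$$
On the graded mesh (\ref{b-1}), $\max_j\rho_j\simeq N^{-1}$, which yields $N^{-3\beta}=N^{-\frac32\alpha}$. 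Combining all contributions, the $N^{-\min\{\gamma\alpha,2-\beta\}}$ and $N^{-(2-\beta)}$ parts are dominated by $N^{-\frac32\alpha}$ (note $\gamma\alpha\ge\gamma\beta$, and $\tfrac32\alpha<2-\beta$). This gives the stated rate $N^{-\min\{\gamma\beta,\frac32\alpha\}}+h_1^2+h_2^2$.

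The step-size condition of Lemma \ref{grownwall} holds for $N$ large; for small $N$ the bound is trivial after enlarging $C$, which justifies calling the convergence unconditional. The remaining point to check carefully is the nonnegativity identity for the mixed fourth-order term on the discrete gradient. If it fails, we would instead take the inner product with $-\Delta_h\bar u_h^n$ in place of the $\nabla_h$-pairing, which gives the same energy norm and makes the positivity transparent.
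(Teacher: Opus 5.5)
Your proposal is correct and follows the same route as the paper. You pair the first error equation with $\bar v_h^n$ and the $\nabla_h$-differentiated second equation with $-\nabla_h\bar u_h^n$, drop the nonnegative term $\mu^3(\nabla_h\delta_x^2\delta_y^2\bar u_h^n,\nabla_h\bar u_h^n)$, apply Lemma \ref{FD-ineq}, and conclude with Lemmas \ref{P} and \ref{grownwall}; you also fill in what the paper leaves implicit, namely the interval-by-interval treatment of the delay term and the bound $\sum_j P^{(k)}_{k-j}\|\nabla_h\varepsilon_h^j\|\lesssim t_k^\beta\max_j\rho_j^{3\beta}\lesssim N^{-\frac32\alpha}$ that produces the $\frac32\alpha$ rate.
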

\begin{proof}
Acting $\nabla_h$ on the second equation of (\ref{error-system-2}). Taking the inner product with $\bar v_h^n$ and $-\nabla_h \bar u_h^n$ for the first two equations, respectively. It gives that
\begin{align*}
&(\bar\partial_t^{\beta} \bar v_h^{n},\bar v_h^{n}) - (\Delta_h \bar u_h^{n},\bar v_h^{n}) + (\bar u_h^{n-2N},\bar v_h^{n})= (\mathcal{R}_1^{n},\bar v_h^{n}), \\
&-(\nabla_h\bar v_h^{n},\nabla_h \bar u_h^{n})  = -(\bar\partial_t^{\beta} \nabla_h\bar u_h^{n},\nabla_h \bar u_h^{n})-(\mu^3\nabla_h \delta_x^2\delta_y^2\bar u_h^n,\nabla_h \bar u_h^{n})
-(\nabla_h\mathcal{R}_1^{n},\nabla_h \bar u_h^{n}).
\end{align*}
Adding above equations, it comes that
\begin{align*} &(\bar\partial_t^{\beta}  \bar v_h^{n},\bar v_h^{n}) + (\bar\partial_t^{\beta}  \nabla_h \bar u_h^{n},\nabla_h \bar u_h^{n}) + (\bar u_h^{n-2N},\bar v_h^{n})\\
= &(\mathcal{R}_1^{n},\bar v_h^{n})-\mu^3(\nabla_h \delta_x^2\delta_y^2\bar u_h^n,\nabla_h \bar u_h^{n})-(\nabla_h \mathcal{R}_2^{n},\nabla_h \bar u_h^{n}).
\end{align*}
Note that
\[
\mu^3 (\nabla_h \delta_x^2 \delta_y^2 \bar u_h^n, \nabla_h \bar u_h^n)
= \mu^3 (\nabla_h \delta_x \delta_y\bar u_h^n, \nabla_h \delta_x \delta_y \bar u_h^n) \geq 0.
\]
By Lemma \ref{FD-ineq}, one has \begin{align*} &\frac12\bar\partial_t^{\beta} \big(\|\bar v_h^n\|^2 + \|\nabla_h \bar u_h^n\|^2\big) \\ \le& \big(\|\mathcal{R}_1^{n}\|+\|\bar u_h^{n-2N}\|\big)\|\bar v_h^{n}\|+\|\nabla_h \mathcal{R}_2^{n}\|\|\nabla_h \bar u_h^{n}\|\\ \le&\big(\|\mathcal{R}_1^{n}\|+\|\bar u_h^{n-2N}\|+\|\nabla_h \mathcal{R}_2^{n}\|\big)\big(\|\bar v_h^{n}\|+\|\nabla_h \bar u_h^{n}\|\big).
\end{align*}
The desired result follows from Lemmas \ref{P} and \ref{grownwall}.
\end{proof}

\subsection{Spatial compact scheme for 2D case}

We notice that the ADI method could be combined with the spatial high-order compact approximation, thereby developing a numerical scheme with spatial fourth-order accuracy.
Define the compact operator
\[
\mathcal{H}_x u_{ij} := \frac{1}{12} (u_{i-1,j} + 10u_{ij} + u_{i+1,j}), \quad 1 \leq i \leq M_1 - 1, \quad 0 \leq j \leq M_2,
\]
and \(\mathcal{H}_y\) is defined similarly. We have \cite{HiaoHL-ADI}
\[
\mathcal{H}_x [u_{xx}(x_i, y_j)] = \delta_x^2 u_{ij} + \mathcal{O}(h_1^4), \quad \mathcal{H}_y [u_{yy}(x_i, y_j)] = \delta_y^2 u_{ij} + \mathcal{O}(h_2^4).
\]

%Define $\mathcal{H} u_{ij}=\mathcal{H}_x\mathcal{H}_y u_{ij},\text{ and } \|u^n\|_{H}=(-\mathcal{H}u,u)^{\frac12}$ for $u\in u_h$.
Taking the operator \(\mathcal{H}_x \mathcal{H}_y\) on both sides of the first equality in (\ref{num-scheme-3}), replacing the auxiliary term \(\varepsilon_{h}^{n}\) in the second equality of (\ref{num-scheme-3}) by \(\hat{\varepsilon}_{h}^{n} := \mu^3 \mathcal{H}_y^{-1} \mathcal{H}_x^{-1} \delta_x^2 \delta_y^2 u_{h}^{n}\), and omitting the small errors, it further obtains the following compact weighted ADI scheme:

\begin{align}\nonumber
&\left( A_0^{(n)} \mathcal{H}_x - \mu \delta_x^2 \right) \left( A_0^{(n)} \mathcal{H}_y -\mu \delta_y^2 \right) U_{h}^{n} \\\label{COM-ADI1}
= & \mathcal{H}_x \mathcal{H}_y \left( (V_{\text{his}})_{h}^{n-1} + f_{h}^{n}-U_h^{n-2N} \right)
+ A_0^{(n)} \mathcal{H}_x \mathcal{H}_y  (U_{\text{his}})_{h}^{n-1}, \\\label{COM-ADI2}
&V_{h}^n = \bar{\partial}_{t}^{\beta} U_{h}^{n} + \mu^3 \mathcal{H}_y^{-1} \mathcal{H}_x^{-1} \delta_x^2 \delta_y^2 U_{h}^{n},
\end{align}
for \( 2N+1 \leq n \leq 2(K+1)N \), equipped with initial and boundary conditions in (\ref{num-scheme-4}).

\begin{remark}
 Owing to the positive definiteness of the operators \(\mathcal{H}_x, \mathcal{H}_y, \mathcal{H}_x^{-1}\) and \(\mathcal{H}_y^{-1}\), following the derivations of Theorem \ref{thm-conv-ADI}, it is easy to see that the  SYM $L1$-ADI compact scheme {\rm(\ref{COM-ADI1})-(\ref{COM-ADI2})} is unconditionally convergent with the accuracy of \(\mathcal{O}(N^{-\min\{\gamma\beta,\frac{3}{2}\alpha\}}+h_1^4 +h_2^4)\) under analogous solution regularity.
\end{remark}

\section{The local error analysis of the proposed schemes}\label{sec-Local}
\subsection{Stability properties of the L1 discrete fractional derivative operator}
The global error analysis imposes strict limitations on the time discretization, particularly requiring a sufficiently small step size to ensure convergence. This restriction becomes  especially restrictive when the fractional order $\beta$ is small. This motivates our rigorous, localized error analysis for the proposed methods applied to such problems. It is noting  that we write \( a \simeq b \) when \( a \lesssim b \) and \( a \gtrsim b \), and \( a \lesssim b \) when \( a \leq Cb \) with a generic constant \( C \) depending on \( \Omega, T, u^0, v^0\) and \( f \), but not on the total numbers of grids in space or time.
Firstly, we define the complementary discrete kernels \( G_{n,j} \) given in \cite{ChenHu2022} by
\[
G_{n,n} = 1, \quad G_{n,j} = \sum_{k=1}^{n-j} \rho_{n-k}^{\beta} (A_{k-1}^{(n)} - A_{k}^{(n)}) G_{n-k,j},\]
for $2N+1\leq n\leq2(K+1)N \text{ and } 2N+1 \leq j \leq n-1.$
And $A_{k-1}^{(n)}\ge A_{k}^{(n)}$ implies $G_{n,j}>0$ for all $n,j$.

For the positive multipliers \( G_{n,j} \), one has the following inequality.
\begin{lemma}\label{lem-G}
For \( n = 2N+1, \ldots, 2(K+1)N \), we have
\[
\Gamma(2 - \beta) \rho_n^\beta \sum_{j=2N+1}^n G_{n,j} \leq \frac{t_n^\beta}{\Gamma(1 + \beta)}.
\]
\end{lemma}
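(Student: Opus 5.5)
The plan is a comparison (barrier) argument for the recursion defining $G_{n,j}$. Multiplying that recursion by $\Gamma(2-\beta)\rho_n^\beta$ makes the quantity $S_n:=\Gamma(2-\beta)\rho_n^\beta\sum_{j=2N+1}^n G_{n,j}$ satisfy a discrete Volterra-type inequality. We then show that the function $\phi(t)=t^\beta/\Gamma(1+\beta)$ is a supersolution of the same discrete relation.

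\textbf{Step 1: recursion for the sums.} Write $\Sigma_n:=\sum_{j=2N+1}^n G_{n,j}$. Summing the definition of $G_{n,j}$ over $j$ and exchanging the order of summation gives
\[
\Sigma_n = 1+\sum_{k=1}^{n-2N-1}\rho_{n-k}^{\beta}\big(A_{k-1}^{(n)}-A_k^{(n)}\big)\Sigma_{n-k}.
\]
Substituting $S_m=\Gamma(2-\beta)\rho_m^\beta\Sigma_m$ and using $\Gamma(2-\beta)\rho_m^\beta\ge 1/A_0^{(m)}$ (from the $L1$ kernel bound $A_0^{(m)}=\rho_m^{-\beta}/\Gamma(2-\beta)$), this becomes, up to constants,
\[
A_0^{(n)}S_n=1+\sum_{k=1}^{n-2N-1}\big(A_{k-1}^{(n)}-A_k^{(n)}\big)S_{n-k}.
\]
Rearranging yields
\[
\sum_{k}A_{n-k}^{(n)}\,(S_k-S_{k-1})=1,
\]
with the convention $S_{2N}=0$. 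In words, the $L1$ operator $\bar\partial_t^\beta$, based at $t_{2N}$, applied to $S$ equals the constant $1$.

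\textbf{Step 2: barrier.} Let $\phi(t)$ be the solution of $\partial_t^\beta\phi=1$, namely $\phi(t)=(t-t_{2N})^\beta/\Gamma(1+\beta)$. Since $t_{2N}=0$, this is exactly $t^\beta/\Gamma(1+\beta)$. Because $\phi$ is concave and increasing, on each subinterval its linear interpolant lies below $\phi$ while their increments agree. Integrating by parts in each cell of the $L1$ formula, the $L1$ truncation error has a sign:
\[
\bar\partial_t^\beta\phi(t_n)\ge\partial_t^\beta\phi(t_n)=1.
\]
This is the standard fact that concave functions give a nonnegative $L1$ consistency error against the decreasing kernel $(t_n-z)^{-\beta}$.

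\textbf{Step 3: discrete comparison.} Set $e_k=\phi(t_k)-S_k$, so that $\sum_k A_{n-k}^{(n)}(e_k-e_{k-1})\ge 0$ with $e_{2N}=0$. Using the monotonicity $A_0^{(n)}\ge A_1^{(n)}\ge\cdots$, we write
\[
A_0^{(n)}e_n\ge\sum_{k}\big(A_{k-1}^{(n)}-A_k^{(n)}\big)e_{n-k}\ \ (\text{plus a boundary term}).
\]
Induction on $n$ then gives $e_n\ge0$, hence $S_n\le t_n^\beta/\Gamma(1+\beta)$.

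\textbf{Main obstacle.} The main obstacle is Step 1: matching the factors $\rho_{n-k}^\beta$ in the definition of $G$ exactly with $\Gamma(2-\beta)\rho_m^\beta$ versus $1/A_0^{(m)}$. Since $A_0^{(m)}=\rho_m^{-\beta}/\Gamma(2-\beta)$ exactly for the $L1$ kernel, I expect these to coincide, and the recursion then becomes an identity. If it does not hold exactly, the inequality direction must be tracked so that Step 3 still applies.
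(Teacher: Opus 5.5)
Your architecture is the standard one, and Steps 2 and 3 are correct. The paper states this lemma without proof; the usual argument is Lemma~\ref{lem-B} with $\mathrm{g}^j\equiv1$ plus comparison with the concave barrier $t^\beta/\Gamma(1+\beta)$, which is your route. In Step 2, cellwise integration by parts gives $\Gamma(1-\beta)\big[\bar\partial_t^\beta\phi(t_n)-\partial_t^\beta\phi(t_n)\big]=\beta\sum_k\int_{t_{k-1}}^{t_k}(t_n-s)^{-\beta-1}(\phi-\phi^I)(s)\,ds\ge0$. In Step 3, the induction needs only $A_{k-1}^{(n)}\ge A_k^{(n)}$ and $e_{2N}=0$.

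The gap is in Step 1, exactly where you flagged it, and your expectation there is wrong. Suppose the kernel in the recursion for $G_{n,j}$ is the Section 3 kernel, so $A_0^{(m)}=\rho_m^{-\beta}/\Gamma(2-\beta)$. Then $\Sigma_m=A_0^{(m)}S_m$ and $\rho_m^\beta\Sigma_m=S_m/\Gamma(2-\beta)$, and the exact recursion is
\[
A_0^{(n)}S_n=1+\frac{1}{\Gamma(2-\beta)}\sum_{k=1}^{n-2N-1}\big(A_{k-1}^{(n)}-A_k^{(n)}\big)S_{n-k},
\quad\text{i.e.}\quad
\bar\partial_t^\beta S_n=1+\Big(\frac{1}{\Gamma(2-\beta)}-1\Big)\sum_{k=1}^{n-2N-1}\big(A_{k-1}^{(n)}-A_k^{(n)}\big)S_{n-k}\ge1 .
\]
The constant you drop ``up to constants'' multiplies the whole history sum. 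Since $\Gamma(2-\beta)<1$, it makes $S$ a supersolution, which is the wrong direction for Step 3.

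Tracking inequality directions cannot rescue this, because under that normalization the lemma is false. Take the uniform mesh ($\gamma=1$, step $h$). The recursion becomes
\[
\Sigma_m=1+\Gamma(2-\beta)^{-1}\sum_{k=1}^{m-1}b_k\Sigma_{m-k},\qquad b_k=2k^{1-\beta}-(k-1)^{1-\beta}-(k+1)^{1-\beta}>0,\qquad \sum_{k\ge1}b_k=1 .
\]
This is a supercritical renewal equation, so $\Sigma_m$ grows geometrically in $m$. The claimed bound, however, is $\Sigma_m\le m^\beta/(\Gamma(2-\beta)\Gamma(1+\beta))$.

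The statement is true under the normalization the paper actually uses in Section 4. The proof of Lemma~\ref{lem-B} relies on $A_0^{(n)}=\rho_n^{-\beta}$, i.e. the kernel in the $G$-recursion is $\widetilde A=\Gamma(2-\beta)A$, with $\widetilde A_{n-k}^{(n)}=\big[(t_n-t_{k-1})^{1-\beta}-(t_n-t_k)^{1-\beta}\big]/\rho_k$. With that kernel your computation closes exactly:
\[
\widetilde A_0^{(n)}S_n-\sum_k\big(\widetilde A_{k-1}^{(n)}-\widetilde A_k^{(n)}\big)S_{n-k}=\Gamma(2-\beta).
\]
Dividing by $\Gamma(2-\beta)$ gives $\sum_{k=2N+1}^nA_{n-k}^{(n)}(S_k-S_{k-1})=1$ for the genuine $L1$ operator; this is Lemma~\ref{lem-B} with $\mathrm{g}^j\equiv1$. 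Steps 2--3 then give $S_n\le t_n^\beta/\Gamma(1+\beta)$.

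So you must fix explicitly the normalization of the kernel defining $G_{n,j}$ (as written it is inconsistent with Section 3) rather than expect the factors to match. More generally, your argument goes through whenever that kernel is at most $\Gamma(2-\beta)$ times the Section 3 kernel.
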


\begin{lemma}%{\rm (\cite[Lemma 3.4]{ChenHu})}
\label{lem-B}
Let \(\{\mathrm{g}^j\}_{j=2N}^{2(K+1)N}\) be arbitrary mesh functions. Set \(B^{2N} = 0\) and \(B^n = \Gamma(2 - \beta)\rho_n^\beta \sum_{j=2N+1}^n G_{n,j}\mathrm{g}^j\) for \(n = 2N+1, \ldots, 2(K+1)N\). Then one has
\[
\bar\partial_t^\beta B^n = \Gamma(2-\beta)\mathrm{g}^n \text{ for } n = 2N+1, 2N+2, \ldots, 2(K+1)N.
\]
\end{lemma}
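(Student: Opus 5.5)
The plan is to verify the identity by direct computation, using the recursive definition of \(G_{n,j}\) and strong induction on \(n\). Since \(B^{2N}=0\), the \(L1\) operator (\ref{dd-1}) can be rewritten by summation by parts as
\[
\bar\partial_t^\beta B^n=\sum_{k=2N+1}^{n}A_{n-k}^{(n)}(B^k-B^{k-1})
=A_0^{(n)}B^n-\sum_{k=1}^{n-2N-1}\big(A_{k-1}^{(n)}-A_k^{(n)}\big)B^{n-k},
\]
where the boundary term \(A_{n-2N-1}^{(n)}B^{2N}\) drops out. So the claim \(\bar\partial_t^\beta B^n=\Gamma(2-\beta)\mathrm g^n\) is equivalent to
\[
A_0^{(n)}B^n=\Gamma(2-\beta)\mathrm g^n+\sum_{k=1}^{n-2N-1}\big(A_{k-1}^{(n)}-A_k^{(n)}\big)B^{n-k}.
\]

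The key input is the value of \(A_0^{(n)}\). Directly from its definition,
\[
A_0^{(n)}=\frac{1}{\Gamma(1-\beta)\rho_n}\int_{t_{n-1}}^{t_n}(t_n-z)^{-\beta}dz=\frac{\rho_n^{-\beta}}{\Gamma(2-\beta)}.
\]
Hence \(A_0^{(n)}B^n=\sum_{j=2N+1}^nG_{n,j}\mathrm g^j\). Next I substitute \(B^{n-k}=\Gamma(2-\beta)\rho_{n-k}^\beta\sum_{j=2N+1}^{n-k}G_{n-k,j}\mathrm g^j\) into the right-hand side and exchange the order of summation. This gives
\[
\Gamma(2-\beta)\mathrm g^n+\Gamma(2-\beta)\sum_{j=2N+1}^{n-1}\mathrm g^j\sum_{k=1}^{n-j}\rho_{n-k}^\beta\big(A_{k-1}^{(n)}-A_k^{(n)}\big)G_{n-k,j}.
\]
For each \(j\le n-1\), the inner sum is precisely \(G_{n,j}\) by the recursive definition.

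The \(j=n\) term on the left is \(G_{n,n}\mathrm g^n=\mathrm g^n\). Matching coefficients of \(\mathrm g^j\) then shows the two sides agree, except for a stray factor \(\Gamma(2-\beta)\) that must be accounted for. I expect this to be the main obstacle. I would resolve it by checking the constants carefully: the factor \(\Gamma(2-\beta)\rho^\beta\) in \(B^n\) exactly cancels \(A_0^{(n)}\) on the left, while on the right each \(B^{n-k}\) contributes \(\Gamma(2-\beta)\rho_{n-k}^\beta\). This suggests that the recursion for \(G\) should absorb the \(\Gamma(2-\beta)\), so that \(\Gamma(2-\beta)\rho_{n-k}^\beta\) is the natural weight.

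If with the stated normalization the bookkeeping instead gives \(\bar\partial_t^\beta B^n=\mathrm g^n\), then I would adjust, interpreting the \(G_{n,j}\) recursion with the weight \(\Gamma(2-\beta)\rho_{n-k}^\beta\) as in the cited source \cite{ChenHu2022}. Either way, the argument is a purely algebraic rearrangement: summation by parts, the closed form for \(A_0^{(n)}\), and an interchange of the double sum. Consistency is guaranteed because \(G_{n,j}\) is defined exactly as the kernel that inverts the lower-triangular Toeplitz-like \(L1\) system.
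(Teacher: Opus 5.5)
Your computation follows the paper's proof step for step: summation by parts with $B^{2N}=0$, substitution of $B^{n-k}$, interchange of the double sum, and recognition of the inner sum as $G_{n,j}$. Each of those steps is correct. The one difference is the value of $A_0^{(n)}$. You take $A_0^{(n)}=\rho_n^{-\beta}/\Gamma(2-\beta)$, which is what \eqref{dd-1} gives. The paper's proof instead uses $A_0^{(n)}=\rho_n^{-\beta}$, and that is the only reason its last line produces $\Gamma(2-\beta)\mathrm{g}^n$.

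So the obstruction you found is real; more careful bookkeeping will not remove it. With \eqref{dd-1} and the recursion for $G_{n,j}$ exactly as written, your two sides are
\[
A_0^{(n)}B^n=\sum_{j=2N+1}^{n}G_{n,j}\mathrm{g}^j
\quad\text{and}\quad
\Gamma(2-\beta)\mathrm{g}^n+\sum_{k=1}^{n-2N-1}\big(A_{k-1}^{(n)}-A_k^{(n)}\big)B^{n-k}=\Gamma(2-\beta)\sum_{j=2N+1}^{n}G_{n,j}\mathrm{g}^j .
\]
Since $\Gamma(2-\beta)<1$ for $\beta\in(0,1)$, the stated identity fails. Already at $n=2N+1$, where the recursion plays no role, one gets $\bar\partial_t^\beta B^{2N+1}=A_0^{(2N+1)}B^{2N+1}=\mathrm{g}^{2N+1}$.

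The gap in your proposal is that it ends with ``either way'' instead of fixing one consistent normalization and saying what is proved. The two repairs you float do not give the same conclusion.
\begin{itemize}
\item[(i)] Use coefficients equal to $\Gamma(2-\beta)$ times those in \eqref{dd-1}, in both the operator and the recursion, so that $A_0^{(n)}=\rho_n^{-\beta}$. This is what the paper's proof does implicitly, and your computation then closes verbatim with right-hand side $\Gamma(2-\beta)\mathrm{g}^n$.
\item[(ii)] Keep \eqref{dd-1} and define $G_{n,j}$ with weight $\Gamma(2-\beta)\rho_{n-k}^\beta=1/A_0^{(n-k)}$. The same computation then yields $\bar\partial_t^\beta B^n=\mathrm{g}^n$.
\end{itemize}
The two are equivalent up to rescaling the operator. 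For the operator \eqref{dd-1} actually used in the scheme, the correct conclusion is $\bar\partial_t^\beta B^n=\mathrm{g}^n$. This is also the form consistent with Lemma \ref{lem-G} (take $\mathrm{g}\equiv1$ and compare with $t^\beta/\Gamma(1+\beta)$).

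The correction is not cosmetic. With \eqref{dd-1} and weight $\rho_{n-k}^\beta$, on a uniform mesh the recursion becomes a renewal equation whose kernel has total mass $1/\Gamma(2-\beta)>1$. Then $G_{n,j}$ grows exponentially in $n-j$, so your closing claim that $G_{n,j}$ inverts the $L1$ system holds only after the fix.
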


\begin{proof}
Using the definitions of \(B^n\) and \(G_{n,j}\), for each \(n\) we get
\[
\begin{aligned}
\bar\partial_t^\beta B^n
&= A^{(n)}_0B^n - \sum_{k=1}^{n-2N-1} B^{n-k}(A^{(n)}_{k-1} - A^{(n)}_{k}) \\
&= A^{(n)}_0B^n - \Gamma(2-\beta)\sum_{k=1}^{n-2N-1} (A^{(n)}_{k-1} - A^{(n)}_{k})\rho_{n-k}^{\beta} \sum_{j=2N+1}^{n-k} G_{n-k,j}\mathrm{g}^j \\
&= A^{(n)}_0B^n - \Gamma(2-\beta)\sum_{j=2N+1}^{n-1} \sum_{k=1}^{n-j}\rho_{n-k}^{\beta} (A^{(n)}_{k-1} - A^{(n)}_{k})G_{n-k,j}\mathrm{g}^j \\
&= A^{(n)}_0B^n - \Gamma(2-\beta)\sum_{j=2N+1}^{n-1} G_{n,j}\mathrm{g}^j = \Gamma(2-\beta)G_{n,n}\mathrm{g}^n = \Gamma(2-\beta)\mathrm{g}^n,
\end{aligned}
\]
since \(A^{(n)}_0 = \rho_n^{-\beta}\).
\end{proof}

\begin{theorem}\label{theorem-M}
Let \( 1 \leq \gamma \leq (2 - \beta)/\beta \).
Assume the nonnegative grid function sequences \(\{v^n\}_{n=2N}^{2(K+1)N}\) satisfies \( v^{2N} = 0 \) and
\[
\bar\partial_t^\beta v^n \lesssim \sum_{l=1}^{\ell}\big(t_n-(l-1)\tau\big)^{(l-1)\beta}\big(\rho/(t_n-(l-1)\tau)\big)^{\eta_l+1}\] for $ 2\ell N+1\leq n\leq2(\ell+1)N,~1\leq \ell\leq K.$
Then
\[
v^n \leq C \mathcal{M}^n_{\eta_\ell} \quad \text{for } 2\ell N+1\leq n\leq2(\ell+1)N,~1\leq \ell\leq K,
\]
where
\begin{align}\label{M}
\mathcal{M}^n_{\eta_\ell} :=
\begin{cases}
\sum\limits_{l=1}^{\ell}\rho(t_n-(l-1)\tau\big)^{\beta-1}& \text{if } \eta_l > 0, \\
%\le \sum\limits_{l=1}^{\ell}\rho^{1-\beta}(t_n-(l-1)\tau\big)^{\beta-1}
\sum\limits_{l=1}^{\ell}\rho(t_n-(l-1)\tau\big)^{\beta-1}\big[1 + \ln\big((t_n-(\ell-1)\tau)/\rho\big)\big] & \text{if } \eta_l = 0, \\ \sum\limits_{l=1}^{\ell}\rho^{\eta_l+1}(t_n-(\ell-1)\tau\big)^{\beta-\eta_l-1}  \text{ if } \eta_1=\cdots=\eta_\ell \text{ and} &\beta-1\le\eta_l<0,
  \\
\sum\limits_{l=1}^{\ell}\rho^{\eta_l+1}(t_n-(l-1)\tau\big)^{\beta-\eta_l-1} & \text{if } \eta_l <  \beta-1.
\end{cases}
\end{align}
\end{theorem}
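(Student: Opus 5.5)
We argue by a discrete comparison principle combined with explicit barrier functions, in the spirit of Kopteva's barrier approach for L1 on graded meshes. The comparison principle comes from Lemma \ref{lem-B} together with the positivity $G_{n,j}>0$. If $w^{2N}=0$ and $\bar\partial_t^\beta w^n\ge \bar\partial_t^\beta v^n$ for all $n$, then $v^n\le w^n$. To see this, set $\mathrm{g}^j=(\bar\partial_t^\beta(w-v)^j)/\Gamma(2-\beta)\ge0$ and observe that the inverse of $\bar\partial_t^\beta$ is given by $B^n=\Gamma(2-\beta)\rho_n^\beta\sum_j G_{n,j}\mathrm{g}^j\ge0$. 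Since $\bar\partial_t^\beta$ with zero initial value is a lower triangular operator with positive diagonal, $B^n=(w-v)^n$.

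Because $\bar\partial_t^\beta$ is linear, it suffices to treat each summand $l$ on the right-hand side separately and then add the results. So we fix $l\le\ell$ and write $s=t-(l-1)\tau$. The source term is $s^{(l-1)\beta}(\rho/s)^{\eta_l+1}$ for $t_n$ in the $\ell$-th delay interval. This term is supported only for $t_n>(l-1)\tau$. The grid on $((l-1)\tau,\ell\tau]$ is graded from the point $(l-1)\tau$, where $\rho_{2lN+1}=\rho$, so the situation mimics the single-singularity case with origin shifted to $(l-1)\tau$.

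For the barrier we take $w^n=C\,\mathcal B(s_n)$, with $\mathcal B$ chosen according to the case of $\eta_l$:
\begin{itemize}
\item if $\eta_l>0$, take $\mathcal B(s)=\rho s^{\beta-1}$, suitably modified near $s\lesssim\rho$;
\item if $\eta_l=0$, take $\mathcal B(s)=\rho s^{\beta-1}[1+\ln(s/\rho)]$;
\item if $\eta_l<\beta-1$, take $\mathcal B(s)=\rho^{\eta_l+1}s^{\beta-\eta_l-1}$.
\end{itemize}
In practice we use the smoothed form $\mathcal B(s)=\rho^{\eta+1}\big(\max\{s,\rho\}\big)^{\ldots}$, or the continuous function $\mathcal I^\beta$ applied to the source, restricted to the mesh.

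The key computation is then to show $\bar\partial_t^\beta w^n\gtrsim s_n^{(l-1)\beta}(\rho/s_n)^{\eta_l+1}$. The factor $s_n^{(l-1)\beta}$ is harmless: it is bounded, or it is absorbed using $s_n\le K\tau$. Concretely, we split $\bar\partial_t^\beta w^n=\partial_t^\beta \mathcal B(t_n)-(\text{L1 truncation error of }\mathcal B)$. For power functions, $\partial_t^\beta s^{\beta-\eta-1}\simeq s^{-\eta-1}$ by direct computation, since $\partial_t^\beta s^{\mu}=\frac{\Gamma(\mu+1)}{\Gamma(\mu+1-\beta)}s^{\mu-\beta}$. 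The truncation error must be shown to be a lower-order perturbation on the graded mesh. Here the hypothesis $\gamma\le(2-\beta)/\beta$ enters, through $\rho_k\lesssim\rho^{1/\gamma}s_k^{1-1/\gamma}$ from (\ref{t-2}). The portion of the history before $(l-1)\tau$ contributes zero because $w$ vanishes there. The intermediate case $\beta-1\le\eta<0$ with all $\eta_l$ equal is handled as the power case, but with all terms measured from the last singular point $(\ell-1)\tau$. This is consistent with the stated form $\rho^{\eta+1}(t_n-(\ell-1)\tau)^{\beta-\eta-1}$, since earlier $s$ values dominate $t_n-(\ell-1)\tau$ and the exponent $\beta-\eta-1\le 0$.

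We expect the main obstacle to be the uniform lower bound $\bar\partial_t^\beta \mathcal B(t_n)\gtrsim\partial_t^\beta\mathcal B(t_n)$ on the graded mesh, especially in the cases $\eta_l>0$ and $\eta_l=0$. There the barrier $\rho s^{\beta-1}$ is non-monotone near $s\sim\rho$, so positivity of its discrete derivative must be checked by hand on the first few cells. Our first approach is to define the barrier through the discrete inverse itself, i.e.\ $w^n=B^n$ from Lemma \ref{lem-B} with $\mathrm g^j$ equal to the source. Bounding $B^n$ then reduces to estimating $\rho_n^\beta\sum_j G_{n,j}\mathrm g^j$. For this we split the sum into the range $s_j\le s_n/2$, bounded via Lemma \ref{lem-G}-type kernel estimates, and the range $s_j>s_n/2$, bounded via $\sum_j\rho_j s_j^{-\eta-1}$, which yields the three rates (including the logarithm when $\eta=0$).
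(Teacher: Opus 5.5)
\textbf{There is a genuine gap: the proposal has no working barrier for $0<\eta_l<\beta$.}

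Several parts of your set-up are sound:
\begin{itemize}
\item The comparison principle (positivity of $G_{n,j}$ plus Lemma \ref{lem-B}, i.e.\ the M-matrix structure of $\bar\partial_t^\beta$) is correct.
\item Treating one summand at a time, with barriers that vanish before $(l-1)\tau$, is fine, and in that setting the weight $(t_n-(l-1)\tau)^{(l-1)\beta}$ is indeed harmless.
\item For $\eta_l\le\beta-1$ with $\beta-\eta_l-1\in(0,1]$, your power barrier is concave. The L1 error then has the favourable sign, $\bar\partial_t^\beta w\ge\partial_t^\beta w$ on any mesh, so that case works. The paper instead quotes Kopteva's stability bound.
\end{itemize}

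The range $0<\eta_l<\beta$ is genuinely needed: for example, $\eta_1=(2-\beta)/\gamma-1\in(0,\beta)$ when $(2-\beta)/(1+\beta)<\gamma<2-\beta$. Your ``key computation'' $\partial_t^\beta s^{\beta-\eta-1}\simeq s^{-\eta-1}$ fails for exactly the barrier you choose there. At $\mu=\beta-1$ the constant $\Gamma(\mu+1)/\Gamma(\mu+1-\beta)=\Gamma(\beta)/\Gamma(0)$ vanishes, because $s^{\beta-1}$ is annihilated by the order-$\beta$ derivative. So all of $\partial_t^\beta\mathcal B$ comes from the modification near $s\lesssim\rho$. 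As in the paper's computation \eqref{Caputo} with $t_{p_\ell}-(l-1)\tau\simeq\rho$, this gives only $\simeq(\rho/s)^{1+\beta}$ for $s\gg\rho$.

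That bound suffices for $\eta_l\ge\beta$, which is the paper's Lemma \ref{case-2}. For $0<\eta_l<\beta$ the source is larger by the unbounded factor $(s/\rho)^{\beta-\eta_l}$. Moving the kink to $t_p\gg\rho$ does not help: the linear part then gives only $(\rho/t_p)^{2-\beta}$ at $s\simeq\rho$, where the source is $\simeq 1$. Enlarging the constant does not help either, since it costs a factor $\rho^{\eta_l-\beta}$.

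\textbf{Your fallbacks do not close this.}
\begin{itemize}
\item $\mathcal I^\beta$ applied to the source is a correct \emph{continuous} barrier. However, the step $\bar\partial_t^\beta w^n\gtrsim\partial_t^\beta w(t_n)$, which you flag as the main obstacle, is the whole difficulty and is left unproved. For convex $w$ the L1 error has the unfavourable sign. The paper controls it only because its barrier has a free kink index $p_\ell$: the error is at most $[(\rho/(t_{p_\ell}-(l-1)\tau))^{2/\gamma}+\dots]$ times the main term, $\gamma\le(2-\beta)/\beta$ gives $(\rho/s)^{(2-\beta)/\gamma}\le(\rho/s)^{\beta}$, and then $p_\ell$ is taken large. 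Your $\mathcal I^\beta$-barrier has no such parameter.
\item The discrete-inverse route $w^n=B^n$ is circular. Bounding $\Gamma(2-\beta)\rho_n^\beta\sum_jG_{n,j}\mathrm g^j$ \emph{is} Lemma \ref{lem-BM}, which the paper deduces from this theorem.
\item Lemma \ref{lem-G} controls only the total mass. That is useful on $s_j>s_n/2$, where $\mathrm g^j$ is essentially constant, but you assigned it to the other range. On $s_j\le s_n/2$ you would need pointwise decay such as $\rho_n^\beta G_{n,j}\lesssim\rho_j(t_n-t_j)^{\beta-1}$, which is not available on these meshes.
\end{itemize}

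\textbf{The missing idea is the paper's superposition $\sum_m c_m\mathbf B_m$.} Each $\mathbf B_m$ is a kinked barrier: linear up to $t_{(p_\ell)_m}$ and $\simeq s^{\beta-1}$ beyond. The kinks are placed at $(p_\ell)_m-2\ell N=2^m(p_\ell-2\ell N)$, with weights $c_m=2^{-m\gamma\eta_\ell}$. Then $c_m(t_{(p_\ell)_m}-(\ell-1)\tau)^{\eta_\ell}\simeq\rho^{\eta_\ell}$, so each $\mathbf B_m$ supplies $\rho^{\eta_\ell}s^{-\eta_\ell-1}$ on its dyadic window $(p_\ell)_m<j\le(p_\ell)_{m+1}$. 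The weights sum as follows, which is exactly what produces the regimes of $\mathcal M^n_{\eta_\ell}$:
\begin{itemize}
\item $O(1)$ for $\eta_\ell>0$;
\item $1+M\simeq1+\ln(s/\rho)$ for $\eta_\ell=0$;
\item $\simeq(\rho/s)^{\eta_\ell}$ for $\eta_\ell<0$, with the sum truncated at $M\simeq\log_2(n-2\ell N)$ in both of the last two cases.
\end{itemize}
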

\begin{proof}
The proof is presented in the following Lemmas \ref{case-1}-\ref{case-4}, where a few cases will be considered separately.
\end{proof}

\begin{lemma}[\textbf{Proof of Theorem \ref{theorem-M} for $\eta_l\le \beta-1$}]\label{case-1}
If $\eta_l\le \beta-1$, then (\ref{M}) holds true on an arbitrary temporal mesh $\{t_j\}_{j=2N}^n$.
\end{lemma}
\begin{proof}
According to \cite[Lemma 2.1(ii)]{Kopteva2019}, for any nonnegative \(\{v^j\}_{j=2N}^{2(K+1)N}\) on an arbitrary mesh \(\{t_j\}_{j=2N}^{2(K+1)N}\), one has
\begin{align}\label{stability}
v^n \lesssim \max_{2N+1\le j\le n} \left\{t_j^\beta \sum_{l=1}^{\ell}\big(t_j-(l-1)\tau\big)^{(l-1)\beta}\big(\rho/(t_j-(l-1)\tau)\big)^{\eta_l+1}\right\}
\end{align}
for $ 2\ell N+1\leq j\le n\leq2(\ell+1)N,~1\leq \ell\leq K.$
The assumptions on $\{v^n\}$ yield
\begin{align*}
&t_j^\beta \sum_{l=1}^{\ell}\big(t_j-(l-1)\tau\big)^{(l-1)\beta}\big(\rho/(t_j-(l-1)\tau)\big)^{\eta_l+1}\\
=&t_j^\beta \sum_{l=1}^{\ell-1}\big(t_j-(l-1)\tau\big)^{(l-1)\beta}\big(\rho/(t_j-(l-1)\tau)\big)^{\eta_l+1}
+t_j^\beta \big(t_j-(\ell-1)\tau\big)^{(\ell-1)\beta}\big(\rho/(t_j-(\ell-1)\tau)\big)^{\eta_\ell+1}\\
\lesssim&\sum_{l=1}^{\ell-1}\rho^{\eta_l+1}\big(t_j-(l-1)\tau\big)^{l\beta-\eta_l-1}
+t_j^\beta \big(t_j-(\ell-1)\tau\big)^{(\ell-1)\beta-\eta_\ell-1}\rho^{\eta_\ell+1}\\
\lesssim&\sum_{l=1}^{\ell}\rho^{\eta_l+1}\big(t_j-(l-1)\tau\big)^{\beta-\eta_l-1}\lesssim\sum_{l=1}^{\ell}\rho^{\eta_l+1}\big(t_n-(l-1)\tau\big)^{\beta-\eta_l-1}\quad \forall j\le n.
\end{align*}
The desired assertion $v^n\lesssim\sum_{l=1}^{\ell}\rho^{\eta_l+1}\big(t_n-(l-1)\tau\big)^{\beta-\eta_l-1}$
for $ 2\ell N+1\leq n\leq2(\ell+1)N,~1\leq \ell\leq K$
follows.
\end{proof}
\subsubsection{Proof of Theorem \ref{theorem-M} I}
In this case($\eta_l\geq \beta$) $(\rho/(t_j-(l-1)\tau))^{\eta_l+1}\le (\rho/(t_j-(l-1)\tau))^{\beta+1}$, so it suffices to consider only $\eta_l= \beta$. For the latter case, as the operator $\bar\partial_t^\beta$ is associated with a M-matrix, it suffices to prove the following lemma.
That is, if $\bar\partial_t^\beta|V^j|\le\bar\partial_t^\beta W^j\text{ and } W^{2N}=0$, then we have $|V^j|\le|W^j|.$
\begin{lemma}\label{case-2} Let the temporal mesh satisfy \eqref{t-2} with \(1\le \gamma\le (2-\beta)/\beta\).
Then there exists a discrete barrier function \(\{\mathbf{B}^j\}_{j=2N}^{2(K+1)N}\) such that
\[
\mathbf{B}^{2 N}=0,\quad 0\le \mathbf{B}^j\lesssim \sum_{l=1}^{\ell}(t_j-(l-1)\tau)^{\beta-1},\quad
\bar\partial_t^\beta \mathbf{B}^j\gtrsim  \sum_{l=1}^{\ell}\rho^\beta(t_j-(l-1)\tau)^{-\beta-1}
\]
for $ 2\ell N+1\leq j\leq2(\ell+1)N,~1\leq \ell\leq K$.
\end{lemma}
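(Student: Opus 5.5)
We will build the barrier as a sum of shifted single-singularity barriers,
\[
\mathbf{B}^j:=\sum_{l=1}^{\ell}\mathbf{B}_l^j,\qquad 2\ell N+1\le j\le 2(\ell+1)N,
\]
where $\mathbf{B}_l$ is attached to the singular point $(l-1)\tau$ and vanishes for $t_j\le (l-1)\tau$. Since $\bar\partial_t^\beta$ is linear, and the claimed upper bound and lower bound are both sums over $l$, it suffices to construct, for each fixed $l$, a mesh function $\mathbf{B}_l$ with
\[
\mathbf{B}_l^{j}=0 \ \text{for } t_j\le (l-1)\tau,\qquad
0\le\mathbf{B}_l^j\lesssim (t_j-(l-1)\tau)^{\beta-1},\qquad
\bar\partial_t^\beta \mathbf{B}_l^j\gtrsim \rho^\beta (t_j-(l-1)\tau)^{-\beta-1}.
\]
The lower bound must hold for $t_j>(l-1)\tau$, and we also need $\bar\partial_t^\beta\mathbf{B}_l^j\ge 0$ on the mesh, so that adding the pieces preserves the lower bound. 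Write $s=t-(l-1)\tau$. By \eqref{t-2}, the mesh near $s=0$ is graded like $\rho j^\gamma$, with first step $\rho$.

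For the single piece, take the continuous model $b(s)=\min\{\rho^{-1} s,\ s^{\beta-1}\rho^{\beta}\cdot\rho^{-\beta}\}$, suitably normalised. Concretely, we try
\[
\mathbf{B}_l^j=\rho^{\beta-1}\,\phi\big((t_j-(l-1)\tau)/\rho\big),\qquad
\phi(x)=\min\{x,\,x^{\beta-1}\}\ \text{(with } \phi(x)=0 \text{ for } x\le 0).
\]
This rises linearly on the first cell and then decays like $x^{\beta-1}$. It gives $\mathbf{B}_l^j\le (t_j-(l-1)\tau)^{\beta-1}$ directly. The bound is uniform in $\rho$ because, for $s\ge\rho$, $\rho^{\beta-1}(s/\rho)^{\beta-1}=s^{\beta-1}$.

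For the continuous Caputo derivative of $s\mapsto\rho^{\beta-1}\phi(s/\rho)$ we compute by scaling: $\partial^\beta$ of it equals $\rho^{-1}(\partial_x^\beta\phi)(s/\rho)$. For $x\gg1$, the tail asymptotics give
\[
\partial_x^\beta\phi(x)=\frac{x^{-\beta}}{\Gamma(1-\beta)}\int_0^\infty\phi'(y)\,dy+O(x^{-\beta-1}).
\]
The integral $\int_0^\infty\phi'(y)\,dy$ vanishes, since $\phi(0)=\phi(\infty)=0$. So the leading term must be examined at order $x^{-\beta-1}$: it is $\frac{\beta}{\Gamma(1-\beta)}x^{-\beta-1}\int_0^\infty y\phi'(y)\,dy$. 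Integrating by parts, this is $-\int\phi$, which diverges for $\beta-1>-1$.

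Hence the trial profile must be adjusted. We use instead
\[
\phi(x)=\min\{x,1\}\ \text{ on } [0,x_0]
\]
followed by a decaying tail chosen so that $\partial_x^\beta\phi(x)\simeq x^{-\beta-1}$ for $x\ge1$. The natural candidate is the Kopteva-type barrier: take $\partial_x^\beta\phi=\mathcal{I}$-inverse of the target $(1+x)^{-\beta-1}$, that is, $\phi=\mathcal{I}^\beta[(1+x)^{-\beta-1}]$. Then $\phi(x)\simeq \min\{x^\beta, x^{\beta-1}\}$, which is positive with the required decay. The target $\rho^\beta s^{-\beta-1}$ becomes, after scaling, $\rho^{-1}x^{-\beta-1}$, matching $\rho^{\beta-1}\cdot\rho^{-\beta}$. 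The discrete version is $\mathbf{B}_l^j:=\rho^{\beta-1}\phi((t_j-(l-1)\tau)/\rho)$, restricted to the mesh.

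The main obstacle is to show that the L1 truncation error $|\bar\partial_t^\beta\mathbf{B}_l^j-\partial_t^\beta\mathbf{B}_l(t_j)|$ is at most a small multiple of $\rho^\beta s_j^{-\beta-1}$. We do this by writing the error as $\int_0^{t_j}\omega_{1-\beta}(t_j-z)(\phi-\Pi_1\phi)'(z)\,dz$ and integrating by parts into $\int \omega_{-\beta}(t_j-z)(\phi-\Pi_1\phi)(z)\,dz$, which is the integral representation mentioned in the introduction. On the cell $k$ we have $|\phi-\Pi_1\phi|\lesssim\rho_k^2|\phi''|$. Using $\rho_k\lesssim\rho^{1/\gamma}s_k^{1-1/\gamma}$ from \eqref{t-2} together with $|\phi''|\lesssim\rho^{1-\beta}s^{\beta-3}$, summing yields the bound $\rho^{2/\gamma}\cdots$. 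This is dominated by $\rho^\beta s^{-\beta-1}$ exactly when $\gamma\le(2-\beta)/\beta$, which is where the hypothesis enters. The first cells and the last cell $k=j$ are handled directly.

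If the constant in the truncation bound is not small, we fix this by using $\mathbf{B}=c_1\mathbf{B}^{\text{main}}+c_2 \mathbf{B}^{\text{aux}}$. Here $\mathbf{B}^{\text{aux}}_j=\rho^{\beta}\cdot$(the discrete solution with $\bar\partial_t^\beta=\rho^{-\beta}$ on the first cell only) absorbs the near-origin contributions, using Lemma \ref{lem-B} with $\mathrm{g}$ supported near $s=0$ and Lemma \ref{lem-G}. Nonnegativity of each $\bar\partial_t^\beta\mathbf{B}_l$ away from its own singularity follows in the same way. Summing over $l$ then finishes the proof.
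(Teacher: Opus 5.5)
There is a genuine gap: your construction has no way to make the $L1$ error smaller than the continuous Caputo derivative it is subtracted from. Your decomposition into pieces anchored at each $(l-1)\tau$ is fine. So is the error representation $\bar\partial_t^\beta\mathbf{B}^n=\partial_t^\beta\mathbf{B}(t_n)+\frac{\beta}{\Gamma(1-\beta)}\int_0^{t_n}(t_n-s)^{-\beta-1}(\mathbf{B}-\mathbf{B}^I)(s)\,ds$. The problem is the step that is supposed to deliver the lower bound.

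Your barrier $\rho^{\beta-1}\phi(s/\rho)$ has no free parameter; its only length scale is $\rho$. On the convex tail of $\phi$ you have $\mathbf{B}\le\mathbf{B}^I$, so the correction term is negative. For cells well before $t_n$, the $k$-th cell from the singular point contributes about $\rho^\beta k^{\gamma\beta-3}(t_n-(l-1)\tau)^{-\beta-1}$. Summing over $k\ge 2$ gives a $(\gamma,\beta)$-dependent constant times $\rho^\beta(t_n-(l-1)\tau)^{-\beta-1}$, which is exactly the order of $\partial_t^\beta\mathbf{B}(t_n)$. For the piece attached to $(\ell-1)\tau$, the cells next to $t_n$ contribute about $(\rho/s)^{(2-\beta)/\gamma}s^{-1}$, and at $\gamma=(2-\beta)/\beta$ this is again $\rho^\beta s^{-\beta-1}$.

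A bound of the form ``$\lesssim$ the main term'' is useless when you have to subtract it. You need it to be at most, say, half the main term, and nothing in your construction makes that constant small. Your proposed repair cannot supply this. By Lemma \ref{lem-B}, an auxiliary function generated by $\mathrm{g}$ supported near $s=0$ satisfies $\bar\partial_t^\beta\mathbf{B}^{\mathrm{aux},j}=\Gamma(2-\beta)\mathrm{g}^j=0$ at every node outside that support. So it adds nothing at the later nodes where the deficit occurs.

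The missing idea, which is what the paper's proof uses, is a large tunable scale independent of $N$. The linear segment is extended up to $t_{p_\ell}$, with $p_\ell-2\ell N$ large but $O(1)$, so the kink sits at distance $d\simeq\rho(p_\ell-2\ell N)^\gamma$ from the singular point. Then:
\begin{itemize}
\item The exact computation \eqref{Caputo} gives $\partial_t^\beta\mathbf{B}\gtrsim d^\beta s^{-\beta-1}$ beyond the kink.
\item The interpolation error vanishes on the linear part.
\item The remaining $L1$ error is at most $C\bigl[(\rho/d)^{2/\gamma}+(\rho/d)^{\beta}\bigr]d^\beta s^{-\beta-1}$ with $C$ independent of $p_\ell$. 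The second term is where $\gamma\le(2-\beta)/\beta$ enters.
\item The relative error is therefore $O\bigl((p_\ell-2\ell N)^{-2}+(p_\ell-2\ell N)^{-\gamma\beta}\bigr)$, which is below $\frac12$ once $p_\ell$ is large.
\item Since $d^\beta\ge\rho^\beta$, this gives the claimed lower bound. The $O(1)$ nodes before the kink are handled by exactness of $L1$ on linear functions, at the cost of a $p_\ell$-dependent constant.
\end{itemize}

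Your reason for discarding $\phi(x)=\min\{x,x^{\beta-1}\}$ is also wrong. Expanding $(x-y)^{-\beta}$ in powers of $y/x$ is not uniform on $(0,x)$, because $y\phi'(y)\sim y^{\beta-1}$ is not integrable. The exact formula, with $u=1/x\le 1$ and $\nu=1-\beta$, is
\[
\Gamma(1-\beta)\,\partial_x^\beta\phi(x)=x^{-\beta}\Bigl[\frac{1-(1-u)^\nu}{\nu u}-(1-u)^\nu\Bigr]\ge \nu x^{-\beta-1},
\]
using Bernoulli's inequality. So that profile, which is the paper's with the kink placed at $\rho$, already has the right continuous behaviour. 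What it lacks, like your $\mathcal{I}^\beta$ profile, is the large kink parameter.
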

\begin{proof}
Set \(\nu=1-\beta\). Choose a sufficiently large number  \(2\le p_\ell-2\ell N\lesssim 1\).
Define the continuous barrier function
\begin{align}\label{definition_B}
\mathbf{B}(s)=\sum_{l=1}^{\ell}\min\left\{\frac{s-(l-1)\tau}{t_{p_\ell}-(l-1)\tau}\,(t_{p_\ell}-(l-1)\tau)^{-\nu},\;(s-(l-1)\tau)^{-\nu}\right\}
\end{align}
and set \(\mathbf{B}^j=\mathbf{B}(t_j)\) for all grid nodes. Note that, when using the notation of type $\lesssim$, the dependence on $p_\ell$ will be shown
explicitly.
First of all, consider the case $\ell=1$ (i.e.
 $2N+1\le j\le 4N$). From  \cite[(1.2)]{Kopteva2020}, we have
\[\bar\partial_t^\beta \mathbf{B}^j\gtrsim  \rho^\beta t_j^{-\beta-1}.  \]

Next, we consider the case $j\geq 4N+1$.
For \(j\le p_\ell\), we have \(\mathbf{B}(s)=\sum_{l=1}^\ell(s-(l-1)\tau)(t_{p_\ell}-(l-1)\tau)^{-\nu-1}\) is linear on \([0,t_{p_\ell}]\).
Since the $L1$ scheme is exact for linear functions,
\begin{align*}
\bar\partial_t^\beta \mathbf{B}^j = &\partial_t^\beta \mathbf{B}(t_j)
\simeq\sum_{l=1}^{\ell}t_j^{\nu}(t_{p_\ell}-(l-1)\tau)^{-\nu-1}\\
\simeq&\sum_{l=1}^{\ell-1}t_j^{\nu}(t_{p_\ell}-(l-1)\tau)^{-\nu-1}+(p_{\ell}-2\ell N)^{-\gamma(2-\beta)}t_j^{\nu}\rho^{-\nu-1},
\end{align*}
%\sum_{l=1}^{\ell-1}\Big(\frac{t_{p_\ell}-(\ell-1)\tau}{t_{p_\ell}-(l-1)\tau}\Big)^{\nu+1}t_j^{\nu}(t_{p_\ell}-(\ell-1)\tau)^{-\nu-1}
where we also used $t_{p_\ell}-(\ell-1)\tau\simeq\rho(p_\ell-2\ell N)^\gamma$.
As $t_j-(l-1)\tau\ge \rho$, then we get
\begin{align*}
\bar\partial_t^\beta \mathbf{B}^j \gtrsim& \sum_{l=1}^{\ell-1}(t_{j}-(l-1)\tau)^{-1}+(p_{\ell}-2\ell N)^{-\gamma(2-\beta)}\rho^{-\nu-1}(t_j-(\ell-1)\tau)^{\nu}\\
\gtrsim& \sum_{l=1}^{\ell-1}\rho^{\eta_l}(t_j-(l-1)\tau)^{-\eta_l-1}+(p_{\ell}-2\ell N)^{-\gamma(2-\beta)}\rho^{-\nu}(t_j-(\ell-1)\tau)^{\nu-1}\\
\ge& \sum_{l=1}^{\ell-1}\rho^{\eta_l}(t_j-(l-1)\tau)^{-\eta_l-1}+(p_{\ell}-2\ell N)^{-\gamma(2-\beta)}\rho^{\eta_\ell}(t_j-(\ell-1)\tau)^{-\eta_\ell-1}%When t_j-(\ell-1)\tau=\rho, \rho^{-\nu}(t_j-(\ell-1)\tau)^{\nu-1}=\rho^{-1}, which can be incorporated into the final result and [(t_j-(\ell-1)\tau)/\rho]^{\eta_\ell+\nu}\ge 1, since \eta_\ell+\nu>0.
\end{align*}
$\forall \eta_l\ge 0 \text{ including } \eta_l=\beta$ (Noting that the second term in the right hand side also holds for $\forall \eta_l\ge \beta-1$).

For \(t>t_{p_\ell}\), we have \(\mathbf{B}(t)=\sum_{l=1}^\ell(t-(l-1)\tau)^{-\nu}\) on \([t_{p_\ell},t]\), but note that the history \([0,t_{p_\ell}]\) contributes a linear part.
A direct computation gives
\begin{align*}
&\Gamma(1-\beta)\partial_t^\beta \mathbf{B}(t)\\
=&\sum_{l=1}^{\ell}\Big[\underbrace{\int_0^{t_{p_\ell}}(t-s)^{-\beta}(t_{p_\ell}-(l-1)\tau)^{-\nu-1}ds}_{=:I_1}  -\underbrace{\nu\int_{t_{p_\ell}}^t (t-s)^{-\beta}(s-(l-1)\tau)^{-\nu-1}ds}_{=:t^{-1}I_2}\Big].
\end{align*}
Using the substitution \(\hat{s}=s/t,~\hat{t}_{p_\ell}=t_{p_\ell}/t,~\hat{\tau}=\tau/t\) and noting that $\nu+\beta=1$, one obtains
\begin{align*}
I_1=&\frac{1}{\nu}(t_{p_\ell}-(l-1)\tau)^{-\nu-1}(t^{\nu}-(t-t_{p_\ell})^{\nu})\\
=&\frac{1-(1-\hat{t}_{p_\ell})^{\nu}}{\nu}\frac{t^{\nu}(t-(l-1)\tau)^{\beta+1}}{(t_{p_\ell}-(l-1)\tau)^2}(t_{p_\ell}-(l-1)\tau)^{\beta}(t-(l-1)\tau)^{-\beta-1}\\
\end{align*}
and
\begin{align*}
t^{-1}I_2=&\nu t^{-1}\int_{\hat{t}_{p_\ell}}^{1}(1-\hat{s})^{-\beta}(\hat{s}-(l-1)\hat{\tau})^{-\nu-1}d\hat{s}\\
=&\frac{t^{-1}}{1-(l-1)\hat{\tau}}(1-\hat{t}_{p_\ell})^\nu(\hat{t}_{p_\ell}-(l-1)\hat{\tau})^{-\nu}\\
\le&\frac{t^{-1}}{1-(l-1)\hat{\tau}}(\hat{t}_{p_\ell}-(l-1)\hat{\tau})^{-\nu}(1-\nu\hat{t}_{p_\ell})\\
=&\frac{t^{\nu}}{t-(l-1)\tau}(t_{p_\ell}-(l-1)\tau)^{-\nu}-\frac{\nu t_{p_\ell}}{t-(l-1)\tau}(t_{p_\ell}-(l-1)\tau)^{-\nu}t^{-\beta}\\
=&\Big[\frac{t^{\nu}(t-(l-1)\tau)^{\beta}}{t_{p_\ell}-(l-1)\tau}-\frac{\nu t_{p_\ell}}{t_{p_\ell}-(l-1)\tau}\Big(\frac{t-(l-1)\tau}{t}\Big)^{\beta}\Big](t_{p_\ell}-(l-1)\tau)^{\beta}(t-(l-1)\tau)^{-\beta-1}.
\end{align*}
Therefore, it yields that
\begin{align}\nonumber
&\Gamma(1-\beta)\partial_t^\beta \mathbf{B}(t)\\\nonumber \ge&\sum_{l=1}^{\ell}\Big[\frac{1-(1-\hat{t}_{p_\ell})^{\nu}}{\nu}\frac{t^{\nu}(t-(l-1)\tau)^{\beta+1}}{(t_{p_\ell}-(l-1)\tau)^2}-\frac{t^{\nu}(t-(l-1)\tau)^{\beta}}{t_{p_\ell}-(l-1)\tau}\\\nonumber
&+\frac{\nu t_{p_\ell}}{t_{p_\ell}-(l-1)\tau}\Big(\frac{t-(l-1)\tau}{t}\Big)^{\beta}\Big](t_{p_\ell}-(l-1)\tau)^{\beta}(t-(l-1)\tau)^{-\beta-1}\\\nonumber
=&\sum_{l=1}^{\ell}\Big[\Big(\frac{(1-(1-\hat{t}_{p_\ell})^{\nu})(t-(l-1)\tau)}{\nu(t_{p_\ell}-(l-1)\tau)}-1\Big)\frac{t^{\nu}(t-(l-1)\tau)^{\beta}}{t_{p_\ell}-(l-1)\tau}\\\label{Caputo}
&+\frac{\nu t_{p_\ell}}{t_{p_\ell}-(l-1)\tau}\Big(\frac{t-(l-1)\tau}{t}\Big)^{\beta}\Big](t_{p_\ell}-(l-1)\tau)^{\beta}(t-(l-1)\tau)^{-\beta-1}.
\end{align}

For the sake of completing the proof, it remains to show that $|\bar\partial_t^\beta \mathbf{B}^n-\partial_t^\beta \mathbf{B}(t_n)|\le \frac12 \partial_t^\beta \mathbf{B}(t_n)$ for any $n>p_\ell$. Now we estimate the difference between the discrete and continuous derivatives.
Note that $\Gamma(1-\beta)\bigl[\bar\partial_t^\beta \mathbf{B}^n-\partial_t^\beta \mathbf{B}(t_n)\bigr]=\sum_{j=2N+1}^{n}\mu^j$, where use the piecewise linear interpolant \(\mathbf{B}^I\) of \(\mathbf{B}\),
\begin{align}
\mu^j=\int_{t_{j-1}}^{t_{j}}(t_n-s)^{-\beta}\bigl(\mathbf{B}(s)-\mathbf{B}^I(s)\bigr)ds
=\beta\int_{t_{j-1}}^{t_{j}}(t_n-s)^{-\beta-1}\bigl(\mathbf{B}(s)-\mathbf{B}^I(s)\bigr)ds.
\end{align}
The interpolation error is zero on intervals where \(\mathbf{B}\) is linear (i.e., for \(j\le p_\ell\)).
For \(p_\ell+1\le j\le n-1\), standard error estimates give
\[
|\mathbf{B}(s)-\mathbf{B}^I(s)|\lesssim \rho_j^2|\mathbf{B}''(t_{j-1})|.
\]
For $j=n$, we shall use a similar but sharper bound $|\mathbf{B}(s)-\mathbf{B}^I(s)|\lesssim \rho_j(t_n-s)|\mathbf{B}''(t_{j-1})|$.
Combining these yields $|\mathbf{B}(s)-\mathbf{B}^I(s)|\lesssim \rho_j^2\min\{1,(t_n-s)/\rho_n\}|\mathbf{B}''(t_{j-1})|$ for $j>p_\ell$, where $|\mathbf{B}''(t_{j-1})|\lesssim|\mathbf{B}''(s)|\simeq\sum_{l=1}^{\ell}(s-(l-1)\tau)^{-\nu-2}$. %(in view of $t_{j-1}\simeq t_{j}$).
Using the mesh condition (\ref{t-2}): \(\rho_j\simeq\rho^{1/\gamma}(t_j-(\ell-1)\tau)^{1-1/\gamma}\simeq\rho^{1/\gamma}(s-(\ell-1)\tau)^{1-1/\gamma}\le \rho^{1/\gamma}(s-(l-1)\tau)^{1-1/\gamma}\), we arrive at
\[
|\mu^j|\lesssim \rho^{2/\gamma}\sum_{l=1}^{\ell}\int_{t_{j-1}}^{t_j}(t_n-s)^{-\beta-1}(s-(l-1)\tau)^{-\nu-2/\gamma}\min\left\{1,(t_n-s)/\rho_n\right\}ds,\quad  j>p_\ell.
\]
This immediately yields the bound
\begin{align*}
&|\bar\partial_t^\beta \mathbf{B}^n-\partial_t^\beta \mathbf{B}(t_n)|\\
\lesssim&\rho^{2/\gamma}\sum_{l=1}^{\ell}\int_{t_{p_\ell}}^{t_n}(t_n-s)^{-\beta-1}(s-(l-1)\tau)^{-\nu-2/\gamma}\min\left\{1,(t_n-s)/\rho_n\right\}ds\\
\lesssim&\rho^{2/\gamma}t_n^{-2/\gamma-1}\sum_{l=1}^{\ell}\int_{\hat{t}_{p_\ell}}^{1}(1-\hat{s})^{-\beta-1}(\hat{s}-(l-1)\hat{\tau})^{-\nu-2/\gamma}\min\left\{1,(1-\hat{s})/\hat{\rho}_n\right\}d\hat{s},
\end{align*}
where \(\hat{s}=s/t_n\), \(\hat{t}_{p_\ell}=t_{p_\ell}/t_n\), \(\hat{\rho}_n=\rho_n/t_n\) and \(\hat{\tau}_n=\tau_n/t_n\).
Here, when bounding the integral, it is convenient to split into three parts: \((\hat{t}_{p_\ell},\max\{\tfrac12,\hat{t}_{p_\ell}\})\), \((\max\{\tfrac12,\hat{t}_{p_\ell}\},1-\hat{\rho}_n)\) and \((1-\hat{\rho}_n,1)\), where $\hat{\rho}_n\le \frac12$ if $p_\ell$ is large enough.
Estimating each part separately gives
\begin{align*}
&\int_{\hat{t}_{p_\ell}}^{1}(1-\hat{s})^{-\beta-1}(\hat{s}-(l-1)\hat{\tau})^{-\nu-2/\gamma}\min\left\{1,(1-\hat{s})/\hat{\rho}_n\right\}d\hat{s}\\
\le&\int_{\hat{t}_{p_\ell}}^{\max\{\tfrac12,\hat{t}_{p_\ell}\}}(\hat{s}-(l-1)\hat{\tau})^{-\nu-2/\gamma}d\hat{s}+\int_{\frac12}^{1-\hat{\rho}_n}(1-\hat{s})^{-\beta-1}d\hat{s}+\hat{\rho}_n^{-1}\int_{{1-\hat{\rho}_n}}^{1}(1-\hat{s})^{-\beta}d\hat{s}\\
\lesssim& (\hat{t}_{p_\ell}-(l-1)\hat{\tau})^{\beta-2/\gamma}+\hat{\rho}_n^{-\beta}.
\end{align*}
Notice that \(\hat{\rho}_n=\rho_n/t_n\simeq t_n^{-1}\rho^{1/\gamma}\big(t_n-(\ell-1)\tau\big)^{1-1/\gamma}\). Finally, in view of $1\le \gamma\le(2-\beta)/\beta$, one has $\big(\rho/(t_n-(l-1)\tau)\big)^{(2-\beta)/\gamma}\lesssim \big(\rho/(t_n-(l-1)\tau)\big)^{\beta}$. Therefore
\begin{align*}
&|\bar\partial_t^\beta \mathbf{B}^n-\partial_t^\beta \mathbf{B}(t_n)|\\
 \lesssim & \rho^{2/\gamma}t_n^{-2/\gamma-1}\sum_{l=1}^{\ell}\Bigl[(\hat{t}_{p_\ell}-(l-1)\hat{\tau})^{\beta-2/\gamma}
 +\Big(\frac{t_n-(\ell-1)\tau}{t_n}\Big)^{-\beta}\big(\rho/(t_n-(l-1)\tau)\big)^{-\beta/\gamma}\Bigr]\\
=&\sum_{l=1}^{\ell}\Big[\rho^{2/\gamma}t_n^{-\beta-1}(t_{p_\ell}-(l-1)\tau)^{\beta-2/\gamma}
+\rho^{2/\gamma}t_n^{-2/\gamma-1}\Big(\frac{t_n-(\ell-1)\tau}{t_n}\Big)^{-\beta}\big(\rho/(t_n-(l-1)\tau)\big)^{-\beta/\gamma}\Big]\\
=&\sum_{l=1}^{\ell}\Big(\frac{\rho}{t_{p_\ell}-(l-1)\tau}\Big)^{2/\gamma}\Big(\frac{t_n-(l-1)\tau}{t_n}\Big)^{\beta+1}(t_{p_\ell}-(l-1)\tau)^\beta (t_n-(l-1)\tau)^{-\beta-1}\\ &+\sum_{l=1}^{\ell}t_n^{-1}\Big(\frac{t_n-(l-1)\tau}{t_n}\Big)^{2/\gamma}\Big(\frac{t_n-(\ell-1)\tau}{t_n}\Big)^{-\beta} \big(\rho/(t_n-(l-1)\tau)\big)^{(2-\beta)/\gamma}\\
\lesssim&\sum_{l=1}^{\ell}\Big(\frac{\rho}{t_{p_\ell}-(l-1)\tau}\Big)^{2/\gamma}\Big(\frac{t_n-(l-1)\tau}{t_n}\Big)^{\beta+1}(t_{p_\ell}-(l-1)\tau)^\beta (t_n-(l-1)\tau)^{-\beta-1}\\ &+\sum_{l=1}^{\ell}\Big(\frac{t_{p_\ell}-(l-1)\tau}{t_n}\Big)^{2/\gamma+1-\beta}\Big(\frac{t_{n}-(l-1)\tau}{t_{p_\ell}-(l-1)\tau}\Big)^{2/\gamma+1}  \Big(\frac{\rho}{t_{n}-(\ell-1)\tau}\Big)^{\beta}(t_{p_\ell}-(l-1)\tau)^\beta (t_n-(l-1)\tau)^{-\beta-1}\\
\le&\sum_{l=1}^{\ell}\Big[\Big(\frac{\rho}{t_{p_\ell}-(l-1)\tau}\Big)^{2/\gamma}+\Big(\frac{t_{n}-(l-1)\tau}{t_{p_\ell}-(l-1)\tau}\Big)^{2/\gamma+1} \Big(\frac{\rho}{t_{n}-(\ell-1)\tau}\Big)^{\beta}\Big](t_{p_\ell}-(l-1)\tau)^\beta (t_n-(l-1)\tau)^{-\beta-1}.
\end{align*}
Combining the above result with (\ref{Caputo}) and choosing $p_\ell$ sufficiently large yields
\begin{align}\nonumber
\bar\partial_t^\beta \mathbf{B}^n \ge& \partial_t^\beta \mathbf{B}(t_n)-|\bar\partial_t^\beta \mathbf{B}^n-\partial_t^\beta \mathbf{B}(t_n)|\\\nonumber
\gtrsim&\sum_{l=1}^{\ell}(t_{p_\ell}-(l-1)\tau)^\beta (t_n-(l-1)\tau)^{-\beta-1}\\\label{bound}
\ge&\sum_{l=1}^{\ell}(t_{p_\ell}-(\ell-1)\tau)^\beta (t_n-(l-1)\tau)^{-\beta-1}
\quad\forall n>p_\ell,
\end{align}
and hence $\forall n\geq 4N+1.$
This completes the proof.
\end{proof}

\subsubsection{Proof of Theorem \ref{theorem-M} for $\beta-1\le\eta_l< \beta$}
We shall use the notations and some findings from the proof of Lemma \ref{case-2}. Particularly, $\nu = 1-\beta$, while $p_\ell-2\ell N \simeq 1$ was chosen sufficiently large in the proof of Lemma \ref{case-2}. When using the notation of type $\lesssim$, the dependence on $\eta_l$ and $m$, but not on $p_\ell$, will be shown explicitly.
For $m \ge 0$ and $\eta_l <\beta$, set
\begin{align}\label{B-1}
&(p_\ell)_m -2\ell N:= 2^{m} (p_\ell-2\ell N),\\
&(\mathbf{B}_l)^j_m := \min\left\{ \frac{t_j-(l-1)\tau}{(t_{(p_\ell)_m}-(l-1)\tau)^{\nu+1}}, (t_j-(l-1)\tau)^{-\nu} \right\},\quad \mathbf{B}^j_m := \sum_{l=1}^{\ell} (\mathbf{B}_l)^j_m,\\\label{B-2}
&c_m := 2^{-m\gamma\eta_\ell} \quad\Rightarrow\quad c_m (t_{(p_\ell)_m}-(\ell-1)\tau )^{\eta_\ell} \simeq \rho^{\eta_\ell},
\end{align}
for $2\ell N+1\le j\le 2(\ell+1)N,~\ell=1,2,\ldots,K.$
Here the final observation follows from (\ref{t-2}) (which yields $t_{(p_\ell)_m}-(\ell-1)\tau \simeq \rho ((p_\ell)_m-2\ell N)^\gamma$).

Note that $\mathbf{B}^j_{0} = \mathbf{B}^j$, and more generally, $\mathbf{B}^j_m = \mathbf{B}^j\big|_{p_\ell:=(p_\ell)_m}$, where $\mathbf{B}^j$ is from (\ref{definition_B}). Conveniently, in the proof of Lemma \ref{case-2}, the dependence on any sufficiently large $p_\ell$ was shown explicitly. In particular, we recall that $\bar\partial_t^\beta \mathbf{B}^j_m \ge 0$ for $j \ge 2N+1$. Furthermore,
\begin{align}\label{p_0}
\bar\partial_t^\beta \mathbf{B}^j_{0} \gtrsim \sum_{l=1}^{\ell}\rho^{\eta_l} (t_j-(l-1)\tau)^{-\eta_l-1} \quad \text{for } 2N+1 \le j \le (p_\ell)_{0}.
\end{align}
The relation for $\mathbf{B}^j_0 = \mathbf{B}^j$ can be found in the above-mentioned proof for $\eta_l \ge 0$ (in fact, $\bar\partial_t^\beta (\mathbf{B}_\ell)^j_{0} \gtrsim \rho^{\eta_\ell} (t_j-(\ell-1)\tau)^{-\eta_\ell-1}$ holds for $\eta_\ell\ge \beta-1$). From (\ref{bound}), one has $\bar\partial_t^\beta \mathbf{B}^j_m \gtrsim \sum_{l=1}^\ell\big(t_{(p_\ell)_m}-(l-1)\tau\big)^\beta\big(t_{j}-(l-1)\tau\big)^{-\beta-1}$. \iffalse Since the solution in the historic interval $(0,(\ell-1)\tau](\ell\ge 2)$ is smooth, then we have \begin{align} \nonumber
\sum_{l=1}^{\ell-1}\bar\partial_t^\beta (\mathbf{B}_l)^j_m
\gtrsim& \sum_{l=1}^{\ell-1}\big(t_{(p_\ell)_m}-(l-1)\tau\big)^\beta\big(t_{j}-(l-1)\tau\big)^{-\beta-1}\\\nonumber
\ge& \sum_{l=1}^{\ell-1}\big(t_{(p_\ell)_m}-(l-1)\tau\big)^{\eta_l}\big(t_{j}-(l-1)\tau\big)^{-\eta_l-1}\\\label{p_history}
\simeq&\sum_{l=1}^{\ell-1}\rho^{\eta_l} \big(t_{j}-(l-1)\tau\big)^{-\eta_l-1} \text{ for } (p_\ell)_m < j \le(p_\ell)_{m+1}.
\end{align}\fi
Indeed, it implies
\begin{align} \nonumber
&\sum_{l=1}^{\ell-1}\bar\partial_t^\beta (\mathbf{B}_l)^j_m+c_m\bar\partial_t^\beta (\mathbf{B}_\ell)^j_m\\\nonumber
\gtrsim &\sum_{l=1}^{\ell-1}\big(t_{(p_\ell)_m}-(l-1)\tau\big)^\beta\big(t_{j}-(l-1)\tau\big)^{-\beta-1}+ c_m \big(t_{(p_\ell)_m}-(\ell-1)\big)^{\eta_\ell} \big(t_{j}-(\ell-1)\tau\big)^{-\eta_\ell-1} \\\nonumber
\gtrsim& \sum_{l=1}^{\ell-1}\big(t_{(p_\ell)_m}-(l-1)\tau\big)^{\eta_l}\big(t_{j}-(l-1)\tau\big)^{-\eta_l-1}+\rho^{\eta_\ell} \big(t_{j}-(\ell-1)\tau\big)^{-\eta_\ell-1}
\\\label{p_m}
\gtrsim &\sum_{l=1}^{\ell}\rho^{\eta_l} \big(t_{j}-(l-1)\tau\big)^{-\eta_l-1}~ \text{ for } (p_\ell)_m < j \le(p_\ell)_{m+1} \text{ and }\eta_l\ge 0.
\end{align}

Note that $c_m\bar\partial_t^\beta(\mathbf{B}_\ell)^j_m\gtrsim\rho^{\eta_\ell} \big(t_{j}-(\ell-1)\tau\big)^{-\eta_\ell-1}~ \text{ for } (p_\ell)_m < j \le(p_\ell)_{m+1}\text{ and }\eta_l\ge \beta-1$ holds as well.
Now we are prepared to prove the following two lemmas, which are sufficient for establishing Theorem \ref{theorem-M} for $\eta_l \in (0,\beta)$ and $\eta_l \in [\beta-1,0]$ ($l=1,2,\ldots,\ell$) respectively.

\begin{lemma}\label{case-3}
Let the temporal mesh satisfy \eqref{t-2} with \(1\le \gamma\le (2-\beta)/\beta\). Suppose that $\eta_l \in (0,\beta)$. Then there exists a discrete barrier function $\{ \mathbf{\bar B}^j\}_{j=2N}^{2(K+1)N}$ such that $\mathbf{\bar B}^{2N} = 0$, while $0 \le \mathbf{\bar B}^j \lesssim \sum_{l=1}^{\ell}(t_j-(l-1)\tau)^{\beta-1}$ and $\bar\partial_t^\beta \mathbf{\bar B}^j \gtrsim \sum_{l=1}^{\ell}\rho^{\eta_l} (t_j-(l-1)\tau)^{-\eta_l-1}$ for $j \ge 2N+1$.
\end{lemma}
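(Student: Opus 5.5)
I would build $\mathbf{\bar B}^j$ as a weighted superposition of the dyadically rescaled barriers $\mathbf{B}^j_m$ introduced in (\ref{B-1})--(\ref{B-2}). Concretely, I set
\[
\mathbf{\bar B}^j:=\sum_{l=1}^{\ell-1}(\mathbf{B}_l)^j_0+\sum_{m=0}^{m^*} c_m(\mathbf{B}_\ell)^j_m,\qquad c_m=2^{-m\gamma\eta_\ell},
\]
where $m^*$ is the largest index with $(p_\ell)_{m^*}\le 2(\ell+1)N$, so $m^*\lesssim \ln N$. Each summand vanishes at $j=2N$, because every piece is linear from the left end and is zero there. Hence $\mathbf{\bar B}^{2N}=0$.

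\textbf{Upper bound.} Each $(\mathbf{B}_l)^j_m\le (t_j-(l-1)\tau)^{-\nu}=(t_j-(l-1)\tau)^{\beta-1}$. Since $\eta_\ell>0$, the weights satisfy $\sum_m c_m\le (1-2^{-\gamma\eta_\ell})^{-1}\lesssim 1$, with a constant depending on $\eta_\ell$ as allowed. Therefore $\mathbf{\bar B}^j\lesssim\sum_{l=1}^\ell (t_j-(l-1)\tau)^{\beta-1}$. This is exactly where $\eta_\ell>0$ is used, as opposed to the logarithmic case $\eta_\ell=0$.

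\textbf{Lower bound on $\bar\partial_t^\beta\mathbf{\bar B}^j$.} From the proof of Lemma \ref{case-2}, every $\bar\partial_t^\beta(\mathbf{B}_l)^j_m\ge 0$ for all $j$ and all $m$. Linearity of $\bar\partial_t^\beta$ therefore lets me drop terms: for a given $j$ it suffices to keep a single $m$. I split the range of $j$ into two cases.

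\emph{Case $j\le(p_\ell)_0$.} I keep $m=0$. By (\ref{p_0}) the full sum $\bar\partial_t^\beta\mathbf{B}^j_0$ already gives $\sum_{l=1}^\ell\rho^{\eta_l}(t_j-(l-1)\tau)^{-\eta_l-1}$, and $c_0=1$.

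\emph{Case $(p_\ell)_m<j\le(p_\ell)_{m+1}$.} I use (\ref{p_m}). The history terms $l<\ell$ come from $(\mathbf{B}_l)^j_0$ together with (\ref{bound}). Here I must check that the inequality $(t_{(p_\ell)_0}-(l-1)\tau)^{\beta}(t_j-(l-1)\tau)^{-\beta-1}\gtrsim\rho^{\eta_l}(t_j-(l-1)\tau)^{-\eta_l-1}$ holds. For $l<\ell$ we have $t_j-(l-1)\tau\ge\tau$, so both sides are $O(1)$ quantities up to powers of $\rho$, and $\rho^{\eta_l}\lesssim1$ closes the inequality. The term $l=\ell$ comes from $c_m\bar\partial_t^\beta(\mathbf{B}_\ell)^j_m$. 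By (\ref{bound}) this is at least $c_m(t_{(p_\ell)_m}-(\ell-1)\tau)^{\beta}(t_j-(\ell-1)\tau)^{-\beta-1}$. On this window $t_j-(\ell-1)\tau\simeq t_{(p_\ell)_m}-(\ell-1)\tau$, because $(p_\ell)_{m+1}-2\ell N=2((p_\ell)_m-2\ell N)$ and the mesh is polynomially graded with exponent $\gamma$. So I may trade the exponent $\beta$ for $\eta_\ell$, and then $c_m(t_{(p_\ell)_m}-(\ell-1)\tau)^{\eta_\ell}\simeq\rho^{\eta_\ell}$ gives $\rho^{\eta_\ell}(t_j-(\ell-1)\tau)^{-\eta_\ell-1}$.

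\textbf{Main obstacle.} The delicate point is this comparability step on each dyadic window. All constants, namely the grading constants in (\ref{t-2}) and the constant in (\ref{bound}), must be uniform in $m$; only the weights $c_m$ may carry the $m$-dependence. I would verify this first by rereading the proof of Lemma \ref{case-2} with $p_\ell$ replaced by $(p_\ell)_m$. In that proof the dependence on $p_\ell$ was tracked explicitly, and only a lower bound $p_\ell-2\ell N\ge$ const was required, which $(p_\ell)_m$ inherits.
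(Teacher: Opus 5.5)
Your proposal is correct and is essentially the paper's proof: you superpose the dyadically rescaled barriers of (\ref{B-1})--(\ref{B-2}) with weights $c_m=2^{-m\gamma\eta_\ell}$, obtain the upper bound from the geometric series $\sum_m c_m\le(1-2^{-\gamma\eta_\ell})^{-1}$ (this is where $\eta_\ell>0$ is essential), and obtain the lower bound window by window from (\ref{p_0}) and (\ref{p_m}). Your deviations are harmless and actually tidy the argument: you truncate at $m^*$ so that no nonexistent node $t_{(p_\ell)_m}$ is referenced, you keep the history pieces only at level $m=0$ with unit weight instead of using the paper's $\sum_{m\ge0}c_m\mathbf{B}^j_m$, and you make explicit the window comparability $t_j-(\ell-1)\tau\simeq t_{(p_\ell)_m}-(\ell-1)\tau$ that the paper uses silently when trading the exponent $\beta$ for $\eta_\ell$; the small price is that you need $\bar\partial_t^\beta(\mathbf{B}_\ell)^j_{m'}\ge 0$ for each single piece, which follows from the per-$l$ structure of the proof of Lemma \ref{case-2}.
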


\begin{proof}
Using (\ref{B-1})-(\ref{B-2}), let $\mathbf{\bar B}^j := \sum_{m=0}^\infty c_m \mathbf{B}^j_m$. Then $\bar\partial_t^\beta \mathbf{\bar B}^j \gtrsim \sum_{l=1}^\ell\rho^{\eta_l} \big(t_{j}-(l-1)\tau\big)^{-\eta_l-1}$ for all $j\ge 2N+1$ follows from (\ref{p_0})-(\ref{p_m}), while $\sum_{m=0}^\infty c_m = C_{\eta_\ell}:= (1-2^{-\gamma\eta_\ell})^{-1}$, so $\mathbf{\bar B}^j \le  C_{\eta_\ell} \sum_{l=1}^\ell\big(t_{j}-(l-1)\tau\big)^{\beta-1}$, which completes the proof.
\end{proof}

\begin{lemma}\label{case-4}
Let the temporal mesh satisfy \eqref{t-2} with \(1\le \gamma\le (2-\beta)/\beta\), and suppose that $\eta_l \in [\beta-1, 0]$. If $V^0 = 0$ and
\[
\bar\partial_t^\beta |V^j| \lesssim
\begin{cases}
\sum_{l=1}^{\ell}\rho^{\eta_l} (t_j-(l-1)\tau)^{-\eta_l-1},\quad &\eta_l=0,\\
\sum_{l=1}^{\ell}\rho^{\eta_l} (t_j-(\ell-1)\tau)^{-\eta_l-1},\quad &\eta_l<0 \text{ and } \eta_1=\cdots=\eta_\ell,
\end{cases}
\]
for $2\ell N+1\le j\le n\le 2(\ell+1)N,~\ell=1,2,\ldots,K$, then
\[
|V^n| \lesssim
\begin{cases}
\sum_{l=1}^{\ell}(t_n-(l-1)\tau)^{\beta-1}\big[1+\ln\big((t_n-(\ell-1)\tau)/\rho\big)\big], & \eta_l = 0,\\
\sum_{l=1}^{\ell}(t_n-(\ell-1)\tau)^{\beta-1}(\rho/(t_n-(\ell-1)\tau))^{\eta_l}, & \eta_l < 0 \text{ and } \eta_1=\cdots=\eta_\ell.
\end{cases}
\]
\end{lemma}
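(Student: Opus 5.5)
The proof will follow the comparison-principle strategy already used for Lemma \ref{case-3}. Since $\bar\partial_t^\beta$ corresponds to an M-matrix, it suffices to build a barrier $\{\mathbf{\tilde B}^j\}$ with $\mathbf{\tilde B}^{2N}=0$ and $\bar\partial_t^\beta \mathbf{\tilde B}^j\gtrsim$ the right-hand side of the assumed bound on $\bar\partial_t^\beta|V^j|$. We also need $\mathbf{\tilde B}^n$ bounded by the claimed expression for $|V^n|$. Then $|V^n|\le C\mathbf{\tilde B}^n$. As in Lemma \ref{case-3}, the barrier is the weighted dyadic sum $\mathbf{\tilde B}^j:=\sum_{m\ge0}c_m\mathbf{B}^j_m$ from (\ref{B-1})--(\ref{B-2}), with $c_m=2^{-m\gamma\eta_\ell}$. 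For $\eta_\ell\le 0$ we now have $c_m\ge1$, so the series no longer converges. We therefore truncate it at the index $m^*(j)$ where $t_{(p_\ell)_m}$ reaches $t_j$, and bound the partial sums rather than the full sum.

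\textbf{Lower bound for $\bar\partial_t^\beta\mathbf{\tilde B}^j$.} Each $\mathbf{B}_m$ satisfies $\bar\partial_t^\beta\mathbf{B}^j_m\ge0$ by Lemma \ref{case-2}. Hence for $(p_\ell)_m<j\le(p_\ell)_{m+1}$ it suffices to keep the single term $c_m\mathbf{B}_m$. By (\ref{p_0}), (\ref{p_m}) and the remark after (\ref{p_m}) (valid for $\eta_\ell\ge\beta-1$), this term gives
\[
c_m\bar\partial_t^\beta(\mathbf{B}_\ell)^j_m\gtrsim\rho^{\eta_\ell}(t_j-(\ell-1)\tau)^{-\eta_\ell-1}.
\]
In the case $\eta_1=\cdots=\eta_\ell<0$, the assumption only involves powers of $t_j-(\ell-1)\tau$, so the $\ell$-th component alone suffices. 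In the case $\eta_l=0$, the history components $l<\ell$ follow from the first line of (\ref{p_m}), which is stated for $\eta_l\ge0$. Multiplying by a constant then dominates $\bar\partial_t^\beta|V^j|$. For $j$ beyond the truncation range the remaining terms are nonnegative, so truncating only removes nonnegative contributions. I would define the sum over $m\le m^*$, with $m^*\simeq\log_2 N$ fixed, to avoid $j$-dependence.

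\textbf{Upper bound for $\mathbf{\tilde B}^n$.} For a fixed node $n$, split the sum at the $m_0$ with $t_{(p_\ell)_{m_0}}-(\ell-1)\tau\simeq t_n-(\ell-1)\tau$, i.e.\ $2^{m_0}\simeq((t_n-(\ell-1)\tau)/\rho)^{1/\gamma}$.

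For $m\le m_0$, $\mathbf{B}^n_m$ sits on its power branch, so $\mathbf{B}^n_m\le\sum_l(t_n-(l-1)\tau)^{\beta-1}$. The sum of $c_m$ over these $m$ is $\simeq m_0\simeq\ln((t_n-(\ell-1)\tau)/\rho)$ if $\eta_\ell=0$, and $\simeq 2^{-m_0\gamma\eta_\ell}\simeq(\rho/(t_n-(\ell-1)\tau))^{\eta_\ell}$ if $\eta_\ell<0$ (a geometric sum dominated by its last term).

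For $m>m_0$, $\mathbf{B}^n_m$ sits on its linear branch. For the $\ell$-th component this gives
\[
(\mathbf{B}_\ell)^n_m=\frac{t_n-(\ell-1)\tau}{(t_{(p_\ell)_m}-(\ell-1)\tau)^{\nu+1}}\simeq (t_n-(\ell-1)\tau)^{\beta-1}2^{-(m-m_0)\gamma(2-\beta)}.
\]
Multiplied by $c_m=2^{-m\gamma\eta_\ell}$, this decays geometrically because $\eta_\ell+2-\beta>0$, which holds since $\eta_\ell\ge\beta-1$. The tail is therefore also bounded by $c_{m_0}(t_n-(\ell-1)\tau)^{\beta-1}$. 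Components $l<\ell$ satisfy $\mathbf{B}_l\le(t_n-(l-1)\tau)^{\beta-1}$ uniformly, so they contribute at most $\sum_{m\le m_0}c_m$ times that. Adding the pieces gives the two claimed bounds, after absorbing the lower-order term $1$ into $[1+\ln(\cdot)]$.

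\textbf{Main obstacle.} I expect the hardest step to be the case $\eta_\ell<0$ for the history components $l<\ell$. Bounding them by $\sum_{m\le m_0}c_m\,(t_n-(l-1)\tau)^{\beta-1}$ is valid, but the lower bound must use only the $\ell$-th kernel. This is exactly why the hypothesis $\eta_1=\cdots=\eta_\ell$ is imposed and the right-hand side is written in terms of $t_j-(\ell-1)\tau$. Checking that the constants in (\ref{p_m}) do not depend on $m$ is also delicate. This should follow because Lemma \ref{case-2} tracks the $p_\ell$-dependence explicitly, and (\ref{bound}) holds uniformly for all sufficiently large $p_\ell$.
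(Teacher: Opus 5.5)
Your construction follows the paper's proof: the weighted dyadic sum $\sum_m c_m\mathbf{B}^j_m$ with $c_m=2^{-m\gamma\eta_\ell}$, the lower bound from (\ref{p_0})--(\ref{p_m}), and the comparison principle. The only departure is the truncation, and that is where a step breaks.

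The paper truncates at $M=M(n)$, chosen so that $(p_\ell)_M<n\le(p_\ell)_{M+1}$. The barrier may depend on the target node $n$, because comparison is applied for that fixed $n$ on $j\le n$; it only has to be independent of $j$. With this choice every $\mathbf{B}^n_m$ is bounded by its power branch. The bound is then simply the factor $\sum_{m\le M}c_m$, which equals $1+M\simeq 1+\ln((t_n-(\ell-1)\tau)/\rho)$ when $\eta_\ell=0$ and is $\simeq(\rho/(t_n-(\ell-1)\tau))^{\eta_\ell}$ when $\eta_\ell<0$. No tail analysis is needed. You instead fix $m^*\simeq\log_2 N$ independently of $n$, which forces the split at $m_0$. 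Your tail estimate for the $\ell$-th component on its linear branch is correct, since its decay rate is $2-\beta+\eta_\ell\ge 1$.

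The step that fails is the claim that the history components $l<\ell$ contribute only $\sum_{m\le m_0}c_m$ times $(t_n-(l-1)\tau)^{\beta-1}$. For $m>m_0$ these components are also on their linear branch. However, $(\mathbf{B}_l)^n_m=(t_n-(l-1)\tau)/(t_{(p_\ell)_m}-(l-1)\tau)^{2-\beta}\simeq 1$ uniformly in $m$, because both distances lie between $(\ell-l)\tau$ and $(\ell-l+1)\tau$. Nothing decays. Their true contribution is therefore $\simeq\sum_{m\le m^*}c_m$:
\begin{itemize}
\item $\simeq\ln N$ when $\eta_\ell=0$;
\item $\simeq\rho^{\eta_\ell}$ when $\eta_\ell<0$.
\end{itemize}
Both exceed your stated $\sum_{m\le m_0}c_m$ whenever $x:=t_n-(\ell-1)\tau\ll\tau$. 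For example, when $x\simeq\rho$ and $\eta_\ell=0$ you are off by a factor $\ln N$.

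The conclusion can be rescued in either of two ways:
\begin{itemize}
\item Switch to the $n$-dependent truncation. This is the paper's route and the cleaner one.
\item Keep your barrier and absorb the history terms into the $l=\ell$ term. For $\rho\le x\le\tau$ one has $\ln(1/\rho)\lesssim x^{\beta-1}[1+\ln(x/\rho)]$, which follows by splitting at $x=\rho^{1/2}$. One also has $\rho^{\eta_\ell}\lesssim x^{\beta-1}(\rho/x)^{\eta_\ell}$, which uses $\beta-1-\eta_\ell\le 0$, i.e.\ $\eta_\ell\ge\beta-1$.
\end{itemize}
As written, however, the proposal asserts the wrong count and supplies neither repair.
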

\begin{proof}
Using (\ref{B-1})-(\ref{B-2}), let $\mathbf{\bar B}^j := \sum_{m=0}^M \sum_{l=1}^{\ell-1} (\mathbf{\bar B}_l)^j_m+\sum_{m=0}^M c_m (\mathbf{\bar B}_\ell)^j_m$ and $\eta_l = 0$, where $M = 0$ if $n \le p_\ell$, and $M := \lceil \log_2\big((n-2\ell N)/(p_\ell-2\ell N)\big) - 1 \rceil$ otherwise, so that $(p_\ell)_M < n \le (p_\ell)_{M+1}$. Note also that $M \lesssim \ln (n-2\ell N) \simeq \ln\big((t_n-(\ell-1)\tau)/\rho\big)$ (as $(t_n-(\ell-1)\tau)/\rho\simeq (n-2\ell N)^\gamma$ in view of (\ref{t-2})). Then $   \bar\partial_t^\beta\mathbf{\bar B}^j\gtrsim\sum_{l=1}^{\ell}\rho^{\eta_l} \big(t_{j}-(l-1)\tau\big)^{-\eta_l-1}$ for all $j\ge 2\ell N+1$.
Hence $|V^j| \lesssim \mathbf{\bar B}^j$ for all $j\le n$, in particular $|V^n| \lesssim \mathbf{\bar B}^n$.
On the other hand, $\mathbf{\bar B}^j \le \sum_{l=1}^{\ell}(t_j-(l-1)\tau)^{\beta-1} \sum_{m=0}^M c_m$. Since $\eta_\ell = 0$, each $c_m = 1$, so $\sum_{m=0}^M c_m = 1+M \simeq 1+\ln\big((t_n-(\ell-1)\tau)/\rho\big)$, hence $\mathbf{\bar B}^j \lesssim \sum_{l=1}^{\ell}(t_j-(l-1)\tau)^{\beta-1}\big[1+\ln\big((t_n-(\ell-1)\tau)/\rho\big)\big]$.

When $\eta_l \in (\beta-1,0)$, let $\mathbf{\bar B}^j := \sum_{l=1}^{\ell}\sum_{m=0}^M c_m (\mathbf{\bar B}_\ell)^j_m$ and $\eta_1=\cdots=\eta_\ell$, where $M = 0$ if $n \le p_\ell$, and $M := \lceil \log_2\big((n-2\ell N)/(p_\ell-2\ell N)\big) - 1 \rceil$ otherwise, so that $(p_\ell)_M < n \le (p_\ell)_{M+1}$. Note also that $M \lesssim  \ln\big((t_n-(\ell-1)\tau)/\rho\big)$. Then $ \bar\partial_t^\beta\mathbf{\bar B}^j\gtrsim\sum_{l=1}^{\ell}\rho^{\eta_\ell} \big(t_{j}-(\ell-1)\tau\big)^{-\eta_\ell-1}$ for all $j\ge 2\ell N+1$. Therefore, $|V^j| \lesssim \mathbf{\bar B}^j$ for all $j\le n$, in particular $|V^n| \lesssim \mathbf{\bar B}^n$.
Meanwhile, we have $\sum_{m=0}^M c_m = (c_{M+1}-1)/(c_1-1)$, where $c_{M+1} \simeq \big(\rho/(t_{(p_\ell)_{M+1}}-(\ell-1)\tau)\big)^{\eta_\ell} \simeq \big(\rho/(t_n-(\ell-1)\tau)\big)^{\eta_\ell}$,
while $C_{\eta_\ell}:=(c_1-1)^{-1} =(2^{\gamma|\eta_\ell|}-1)^{-1}$, so finally
$|V^n|\lesssim \mathbf{\bar B}^j\le C_{\eta_\ell}\sum_{l=1}^\ell\big(t_{j}-(\ell-1)\tau\big)^{\beta-1} \big(\rho/(t_n-(\ell-1)\tau)\big)^{\eta_\ell}$.
\end{proof}

Next, a property of the complementary discrete kernels \( G_{n,j} \) will be described as below.

\begin{lemma}\label{lem-BM}
For \( n = 2N+1, 2N+2, \dots, 2(K+1)N \), we set \( B^{2N} = 0 \) and
\begin{align*}
B^n =& \Gamma(2-\beta) \rho_n^{\beta}
\Big(\sum_{p=1}^{\ell-1}\sum_{j=2pN+1}^{2(p+1)N}G_{n,j}\sum_{l=1}^{p}\big(t_j-(l-1)\tau\big)^{(l-1)\beta}\big(\rho/(t_j-(l-1)\tau)\big)^{\eta_l+1}\\ &+\sum_{j=2\ell N+1}^{n}G_{n,j}\sum_{l=1}^{\ell}\big(t_j-(l-1)\tau\big)^{(l-1)\beta}\big(\rho/(t_j-(l-1)\tau)\big)^{\eta_l+1}\Big),
\end{align*}
then it holds that

\[
B^n \leq C \mathcal{M}^n_{\eta_\ell} \text{ for } 2\ell N+1\leq n\leq2(\ell+1)N,~1\leq \ell\leq K.
\]

\end{lemma}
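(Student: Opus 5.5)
The plan is to realise $B^n$ as the solution of a discrete fractional problem through Lemma \ref{lem-B}, and then bound it with the comparison results behind Theorem \ref{theorem-M}. Define the mesh function $\mathrm{g}^j$ for $2N+1\le j\le 2(K+1)N$ by
\[
\mathrm{g}^j=\sum_{l=1}^{p}\big(t_j-(l-1)\tau\big)^{(l-1)\beta}\big(\rho/(t_j-(l-1)\tau)\big)^{\eta_l+1},\qquad 2pN+1\le j\le 2(p+1)N .
\]
With this choice, the double sum in the definition of $B^n$ is exactly $\Gamma(2-\beta)\rho_n^\beta\sum_{j=2N+1}^nG_{n,j}\mathrm{g}^j$, where the blocks $p=1,\dots,\ell-1$ are followed by the current block $p=\ell$. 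Lemma \ref{lem-B} then gives $B^{2N}=0$ and
\[
\bar\partial_t^\beta B^n=\Gamma(2-\beta)\,\mathrm{g}^n\lesssim\sum_{l=1}^{\ell}\big(t_n-(l-1)\tau\big)^{(l-1)\beta}\big(\rho/(t_n-(l-1)\tau)\big)^{\eta_l+1}
\]
for $2\ell N+1\le n\le 2(\ell+1)N$. Since $G_{n,j}>0$ and $\mathrm{g}^j\ge0$, we also have $B^n\ge0$. So $\{B^n\}$ is a nonnegative sequence with zero initial value satisfying precisely the hypothesis of Theorem \ref{theorem-M}, and that theorem gives $B^n\le C\mathcal{M}^n_{\eta_\ell}$.

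To make this self-contained, I would spell out the comparison step case by case, following the definition (\ref{M}).
\begin{itemize}
\item For $\eta_l\le\beta-1$, apply Lemma \ref{case-1} directly, i.e. the stability estimate (\ref{stability}) from \cite{Kopteva2019}, which holds on an arbitrary mesh.
\item For $\eta_l\ge\beta$, reduce to $\eta_l=\beta$ and use the barrier $\mathbf{B}^j$ of Lemma \ref{case-2}, scaled by $\rho$.
\item For $\eta_l\in(0,\beta)$, use the barrier of Lemma \ref{case-3}.
\item For $\eta_l\in[\beta-1,0]$, use the logarithmic or power-weighted barrier of Lemma \ref{case-4}.
\end{itemize}
In each of the last three cases, the inequality $\bar\partial_t^\beta B^j\lesssim\bar\partial_t^\beta(\rho\,\mathbf{\bar B}^j)$ combined with the M-matrix (discrete comparison) property of $\bar\partial_t^\beta$ yields $B^n\lesssim\rho\,\mathbf{\bar B}^n$. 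The factor $(t_n-(l-1)\tau)^{\beta-1}$ carried by the barriers then reproduces the form of $\mathcal{M}^n_{\eta_\ell}$.

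One point needs checking. The right-hand side of Theorem \ref{theorem-M} carries the weights $(t_n-(l-1)\tau)^{(l-1)\beta}$, while the barriers produce $\rho^{\eta_l}(t_j-(l-1)\tau)^{-\eta_l-1}$ without them. These weights are bounded by $(K\tau)^{(l-1)\beta}\lesssim1$, so they can be dropped at the cost of a constant. For the historical indices $l<\ell$, we have $t_j-(l-1)\tau\ge\tau$, so those terms are smooth and bounded, as in the proof of Lemma \ref{case-1}.

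The main obstacle I expect is the behaviour across block boundaries. When $j$ moves from block $p$ to block $p+1$, $\mathrm{g}^j$ gains a new term, and that term is singular at $t=p\tau$. The barriers of Lemmas \ref{case-2}--\ref{case-4} are built block by block, with the parameter $p_\ell$ tied to the current $\ell$. The comparison must therefore be run globally from $j=2N+1$ up to $n$, using one barrier that dominates $\mathrm{g}^j$ in every earlier block as well. I would first try taking the sum over all $l\le\ell$ in (\ref{definition_B}) with $p_\ell$ fixed by the current block, and then check that its discrete derivative stays positive and large enough on the earlier blocks. If that fails, I would fall back on the linearity of the comparison: split $\mathrm{g}=\sum_l\mathrm{g}_l$ according to the singular point $(l-1)\tau$, handle each piece with its own barrier shifted to start at $t=(l-1)\tau$, and add the resulting bounds.
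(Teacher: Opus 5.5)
Your argument matches the paper's proof. You define $\mathrm{g}^j$ blockwise so that $B^n=\Gamma(2-\beta)\rho_n^\beta\sum_{j=2N+1}^n G_{n,j}\mathrm{g}^j$, use Lemma~\ref{lem-B} to get $\bar\partial_t^\beta B^n=\Gamma(2-\beta)\mathrm{g}^n$ with $B^{2N}=0$ (and $B^n\ge0$ since $G_{n,j}>0$), and then invoke Theorem~\ref{theorem-M}. Your case-by-case barrier discussion and your block-boundary concern belong to the proof of Theorem~\ref{theorem-M}, which the paper proves separately, so they are not needed for this lemma.
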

\begin{proof}
According to Lemma \ref{lem-B}, one has
\[ \bar\partial_t^\beta B^n = \Gamma(2-\beta)\sum_{l=1}^{\ell}\big(t_n-(l-1)\tau\big)^{(l-1)\beta}\big(\rho/(t_n-(l-1)\tau)\big)^{\eta_l+1}\]
for  $2\ell N+1\leq n\leq2(\ell+1)N,~1\leq \ell\leq K$.
Then the results follow from Theorem \ref{theorem-M}.
\end{proof}

%\subsection{The local error analysis of the proposed schemes}
\subsection{The local error analysis of SYM L1 scheme (\ref{num-scheme-1})}
Next, we will present the optimal local error analysis for our proposed difference scheme (\ref{num-scheme-1}). It is worth noting that the notations $\mathcal{R}_u^n=\partial_t^{\beta}u^n-\bar\partial_t^{\beta}u^n$ and $\mathcal{R}_v^n=\partial_t^{\beta}v^n-\bar\partial_t^{\beta}v^n$ are still used in this subsection, but new estimates will be provided, which are different from those given in Lemma \ref{lem-r}.
In order to get the error analysis of the difference scheme, the following discrete Gr\"{o}nwall inequality is essential.

\begin{lemma}\rm{(\cite[Lemma 4.2]{Stynes2021})}\label{Gronwall-G}
Suppose that the sequences \( \{ \xi^n \}_{n=2N+1}^{2(K+1)N} \), \( \{ \Psi^n \}_{n=2N+1}^{2(K+1)N}\) and the nonnegative sequences \(  \{u^n\}_{n=2N}^{2(K+1)N} \) and \(  \{v^n\}_{n=2N}^{2(K+1)N} \) , and they satisfy
\[
(\bar\partial_t^{\beta} (u^n+v^n)) (u^n+v^n) \leq \xi^n (u^n+v^n) + (\Psi^n)^2,~~2N+1\leq n\leq 2(K+1)N,
\]
then it holds that
\[
u^n+v^n \leq u^0 + v^0+  \Gamma(2-\beta) \rho_n^{\beta} \sum_{j=2N+1}^n G_{n,j} (\xi^j + \Psi^j) + \max_{2N+1 \leq j \leq n} \{ \Psi^j \}.
\]
\end{lemma}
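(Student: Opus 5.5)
The plan is a discrete comparison (maximum-principle) argument for the M-matrix structure of $\bar\partial_t^\beta$, with the barrier built from the complementary kernels $G_{n,j}$ through Lemma \ref{lem-B}. Throughout, write $w^n:=u^n+v^n\ge0$. The hypothesis then reads $(\bar\partial_t^\beta w^n)\,w^n\le \xi^n w^n+(\Psi^n)^2$. I take $\Psi^n\ge0$, as is implicit, and I replace $\xi^n$ by $\max\{\xi^n,0\}$ if necessary. This keeps the hypothesis true and only enlarges the right-hand side, so we may assume $\xi^j+\Psi^j\ge0$ and $G_{n,j}>0$.

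Define the comparison sequence
\[
Z^n:=w^{2N}+B^n+\max_{2N+1\le j\le n}\Psi^j,\qquad B^n:=\Gamma(2-\beta)\rho_n^\beta\sum_{j=2N+1}^nG_{n,j}(\xi^j+\Psi^j),
\]
with $B^{2N}=0$ and $Z^{2N}=w^{2N}$. This $Z^n$ is exactly the right-hand side of the claim. We need three facts about $\bar\partial_t^\beta Z^n$.

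First, by Lemma \ref{lem-B} applied with $\mathrm{g}^j=\xi^j+\Psi^j$, we get $\bar\partial_t^\beta B^n=\Gamma(2-\beta)(\xi^n+\Psi^n)$. Here one must track the normalization of $A_0^{(n)}$. The proof of Lemma \ref{lem-B} uses $A_0^{(n)}=\rho_n^{-\beta}$, while the definition (\ref{dd-1}) gives $A_0^{(n)}=\rho_n^{-\beta}/\Gamma(2-\beta)$. I will fix the scaling of $G_{n,j}$ consistently so that $\bar\partial_t^\beta B^n\ge \xi^n+\Psi^n$. This bookkeeping is the one place I expect friction.

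Second, the $L1$ operator annihilates constants, so the term $w^{2N}$ contributes nothing.

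Third, the sequence $m^n:=\max_{j\le n}\Psi^j$ is nondecreasing. Its $L1$ derivative $\sum_k A^{(n)}_{n-k}(m^k-m^{k-1})$ is therefore nonnegative.

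Combining these, $\bar\partial_t^\beta Z^n\ge \xi^n+\Psi^n$.

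Next we prove $w^n\le Z^n$ by induction on $n$, the base case $n=2N$ being trivial. Fix $n$ and assume $w^k\le Z^k$ for $k<n$. There are two cases.

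If $w^n\le \Psi^n$, then $w^n\le Z^n$ immediately, because $B^n\ge0$ and $w^{2N}\ge0$.

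Otherwise $w^n>\Psi^n\ge0$. Dividing the hypothesis by $w^n$ and using $(\Psi^n)^2/w^n\le\Psi^n$ gives $\bar\partial_t^\beta w^n\le \xi^n+\Psi^n\le\bar\partial_t^\beta Z^n$. Set $e^k=w^k-Z^k$ and rewrite $\bar\partial_t^\beta$ in the form used in the proof of Lemma \ref{lem-B}. This yields
\[
A_0^{(n)}e^n\le\sum_{k=1}^{n-2N-1}\big(A^{(n)}_{k-1}-A^{(n)}_k\big)e^{n-k}+A^{(n)}_{n-2N-1}e^{2N}\le0 .
\]
The last inequality uses the monotonicity $A^{(n)}_{k-1}\ge A^{(n)}_k>0$ together with the induction hypothesis. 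Hence $e^n\le0$, which closes the induction and gives the stated bound.

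The argument is elementary. The only delicate points are the ones flagged above: the sign of $\xi^n$, handled by passing to its positive part; the nonnegativity of the $L1$ derivative of the running maximum; and the $\Gamma(2-\beta)$ normalization linking Lemma \ref{lem-B} to (\ref{dd-1}).
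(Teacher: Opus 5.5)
The paper gives no proof of its own; it only cites Lemma 4.2 of the Stynes reference. Your barrier-plus-induction argument is the standard way such results are proved, and its structure is sound. The barrier $Z^n=w^{2N}+B^n+\max_{j\le n}\Psi^j$, the case split $w^n\le\Psi^n$ versus $w^n>\Psi^n$, and the induction using $A^{(n)}_{k-1}\ge A^{(n)}_k>0$ are all correct.

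The one step you left open cannot be closed the way you propose. $G_{n,j}$ is fixed by the statement, so you are not free to rescale it. You need a direct computation with the normalization of (\ref{dd-1}), namely $A_0^{(n)}=\rho_n^{-\beta}/\Gamma(2-\beta)$, and the recursion for $G_{n,j}$ exactly as printed. Redoing the proof of Lemma~\ref{lem-B} with this value of $A_0^{(n)}$ gives, for $g^j=\xi^j+\Psi^j$,
\[
\bar\partial_t^\beta B^n=\sum_{j=2N+1}^{n}G_{n,j}g^j-\Gamma(2-\beta)\sum_{j=2N+1}^{n-1}G_{n,j}g^j
=g^n+\bigl(1-\Gamma(2-\beta)\bigr)\sum_{j=2N+1}^{n-1}G_{n,j}g^j .
\]
Since $0<\Gamma(2-\beta)<1$, $G_{n,j}\ge0$ and $g^j\ge0$, this is at least $g^n$. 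So $\bar\partial_t^\beta Z^n\ge\xi^n+\Psi^n$ holds without any rescaling, and your induction closes.

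Do not use Lemma~\ref{lem-B} as printed, i.e.\ $\bar\partial_t^\beta B^n=\Gamma(2-\beta)g^n$, which corresponds to $A_0^{(n)}=\rho_n^{-\beta}$. That would leave your barrier short by the factor $\Gamma(2-\beta)$. Worse, in that normalization the lemma itself fails at $n=2N+1$: take $w^{2N}=0$, $\Psi\equiv0$, $\xi^{2N+1}=1$ and $w^{2N+1}=\rho_{2N+1}^\beta$. So (\ref{dd-1}) is the normalization under which the statement is true, and you were right to flag this.

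A smaller point concerns your replacement of $\xi^n$ by $\max\{\xi^n,0\}$. This proves only a weaker bound, because it enlarges the right-hand side of the conclusion. No repair is possible there, since the statement is false without sign conditions. Take $w\equiv0$ together with a negative constant $\xi$, or a negative constant $\Psi$: the hypothesis holds, but the right-hand side of the conclusion is negative. You should therefore state $\xi^n\ge0$ and $\Psi^n\ge0$ as hypotheses, as in the cited source. With that, your argument goes through as written.
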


Next we will present the following two properties of the operator \( \bar\partial_t^{\beta} \), which can be found in \cite{ChenHu2022}.

\begin{lemma}\label{lem-norm}
Assume \( u^n \in u_{0h} \) for \( 2N+1 \leq n \leq 2(K+1)N \), then we get
\[
\left( \bar\partial_t^{\beta} u^n, u^n \right) \geq \left(\bar\partial_t^{\beta} \|u^n\|\right) \|u^n\| \quad \text{for } 2N+1 \leq n \leq 2(K+1)N.
\]
\iffalse
and
\[
\left( \bar\partial_t^{\beta} u^n, -\mathcal{H}u^n \right) \geq (\bar\partial_t^{\beta} \|u^n\|_H) \|u^n\|_H \quad \text{for } 2N+1 \leq n \leq 2(K+1)N.
\]\fi
\end{lemma}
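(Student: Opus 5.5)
The claim is a pointwise statement: at each grid node $\mathbf{x}_h$ we compare $u^n$ with its norm. Write the L1 operator in its telescoped form,
\[
\bar\partial_t^{\beta}u^n=A_0^{(n)}u^n-\sum_{k=2N+1}^{n-1}\big(A_{n-k-1}^{(n)}-A_{n-k}^{(n)}\big)u^k-A_{n-2N-1}^{(n)}u^{2N}.
\]
This follows from (\ref{dd-1}) by summation by parts. Its key structural features are that the diagonal weight $A_0^{(n)}$ is positive and every history weight $A_{n-k-1}^{(n)}-A_{n-k}^{(n)}$, as well as $A_{n-2N-1}^{(n)}$, is nonnegative. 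These signs follow from the monotonicity $A_0^{(n)}\ge A_1^{(n)}\ge\cdots>0$ already recorded after (\ref{dd-1}).

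Take the discrete inner product $(\cdot,\cdot)$ of this identity with $u^n$.

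The diagonal term gives $A_0^{(n)}\|u^n\|^2$.

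For each history term, apply the Cauchy--Schwarz inequality for the discrete inner product:
\[
(u^k,u^n)\le\|u^k\|\,\|u^n\|.
\]
Because the coefficients multiplying $-(u^k,u^n)$ are nonnegative, replacing $(u^k,u^n)$ by the larger quantity $\|u^k\|\|u^n\|$ can only decrease the right-hand side. Hence
\[
(\bar\partial_t^{\beta}u^n,u^n)\ge \|u^n\|\Big(A_0^{(n)}\|u^n\|-\sum_{k=2N+1}^{n-1}\big(A_{n-k-1}^{(n)}-A_{n-k}^{(n)}\big)\|u^k\|-A_{n-2N-1}^{(n)}\|u^{2N}\|\Big).
\]

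Reading the same telescoped identity backwards, with the scalar sequence $\|u^k\|$ in place of $u^k$, the bracket is exactly $\bar\partial_t^{\beta}\|u^n\|$. This gives the claimed inequality.

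The boundary condition $u^n\in u_{0h}$ plays no essential role here, since the inner product only runs over interior nodes. It is needed only so that the norms are the ones used later in the analysis.

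The only genuinely delicate point is the sign structure. The whole argument rests on the nonnegativity of $A_{k-1}^{(n)}-A_k^{(n)}$ on the nonuniform mesh (\ref{b-1}), which holds because the kernel $(t_n-z)^{-\beta}$ is increasing in $z$, so the interval averages decrease as $k$ moves away from $n$. The case $n=2N+1$ is trivial, since there are no history terms and the bracket reduces to $A_0^{(n)}(\|u^n\|-\|u^{2N}\|)$ up to sign conventions. The same argument, with $-\mathcal{H}$ or the gradient inner product, yields the analogous statements needed for $\|\nabla_h u^n\|$ in the local error analysis.
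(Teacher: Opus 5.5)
Your argument is correct, and it is the standard proof of this inequality. The paper gives no proof of its own and defers to the reference it cites, where the lemma is proved exactly this way: write $\bar\partial_t^{\beta}u^n$ as $A_0^{(n)}u^n$ minus a nonnegative combination of the history values $u^k$ (nonnegative because $A_{k-1}^{(n)}\ge A_k^{(n)}$), then apply Cauchy--Schwarz term by term. Your opening description of the claim as ``pointwise'' is inaccurate, since the argument rightly works with the discrete inner product rather than node by node, but this does not affect the proof.
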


\begin{lemma}\label{lem-ine}
Assume that the functions \( u_i^n \in u_{0h} \) are nonnegative for \( i = 1, 2, \ldots, l \), then it holds that
\[
\sum_{i=1}^l (\bar\partial_t^\beta u_i^n) u_i^n \geq \left(\bar\partial_t^\beta \sqrt{\sum_{i=1}^l (u_i^n)^2}\right) \sqrt{\sum_{i=1}^l (u_i^n)^2},
\]
for \( 2N+1 \leq n \leq 2(K+1)N \).
\end{lemma}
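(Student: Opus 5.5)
The plan is to rewrite the $L1$ operator in its ``diagonal minus history'' form and then apply the Cauchy--Schwarz inequality termwise. The inequality is understood pointwise at each grid node $\textbf{x}_h$, so I fix a node and regard each $u_i^k$ as a real number. I set
\[
w^k:=\sqrt{\sum_{i=1}^l (u_i^k)^2},\qquad 2N\le k\le n,
\]
so that $w^k\ge 0$.

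\textbf{Step 1 (rearranging the operator).} From (\ref{dd-1}), by summation by parts, for any sequence $\{z^k\}$ one has
\[
\bar\partial_t^\beta z^n = A_0^{(n)} z^n-\sum_{k=2N+1}^{n-1}\big(A_{n-k-1}^{(n)}-A_{n-k}^{(n)}\big)z^k - A_{n-2N-1}^{(n)} z^{2N}.
\]
This is the same expansion already used in the proof of Lemma \ref{lem-B} and in the ADI reformulation via $(U_{\text{his}})_h^{n-1}$. The key observation is that, by the monotonicity $A_0^{(n)}\ge A_1^{(n)}\ge\cdots>0$, all history coefficients $c_k^{(n)}:=A_{n-k-1}^{(n)}-A_{n-k}^{(n)}$ and $c_{2N}^{(n)}:=A_{n-2N-1}^{(n)}$ are nonnegative.

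\textbf{Step 2 (multiplying and summing over $i$).} Applying Step 1 with $z=u_i$, multiplying by $u_i^n$ and summing over $i$ gives
\[
\sum_{i=1}^l(\bar\partial_t^\beta u_i^n)u_i^n = A_0^{(n)}(w^n)^2-\sum_{k=2N}^{n-1}c_k^{(n)}\sum_{i=1}^l u_i^k u_i^n .
\]
By the Cauchy--Schwarz inequality in $\mathbb{R}^l$, $\sum_i u_i^k u_i^n\le w^k w^n$. Because $c_k^{(n)}\ge0$, subtracting a smaller quantity yields
\[
\sum_{i=1}^l(\bar\partial_t^\beta u_i^n)u_i^n\ \ge\ A_0^{(n)}(w^n)^2-\sum_{k=2N}^{n-1}c_k^{(n)}w^kw^n = w^n\Big(A_0^{(n)}w^n-\sum_{k=2N}^{n-1}c_k^{(n)}w^k\Big)=(\bar\partial_t^\beta w^n)\,w^n,
\]
where the last equality is Step 1 applied to $z=w$. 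This is exactly the claimed inequality.

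\textbf{Main obstacle.} The only delicate point is the sign of the coefficients $c_k^{(n)}$, which is what allows Cauchy--Schwarz to be applied inside a subtraction. This sign is guaranteed by the stated monotonicity of $A_{j}^{(n)}$ on the nonuniform mesh. The nonnegativity hypothesis on $u_i^n$ is not needed for this argument; Cauchy--Schwarz works for arbitrary signs. The same reasoning with $l=1$ and the discrete inner product in place of the sum over $i$ also recovers Lemma \ref{lem-norm}.
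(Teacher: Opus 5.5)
Your argument is correct and is essentially the standard proof. The paper gives no proof of its own and cites \cite{ChenHu2022}, where the inequality is obtained the same way: write $\bar\partial_t^\beta$ as $A_0^{(n)}z^n$ minus a history sum whose coefficients are nonnegative by the monotonicity of $A_j^{(n)}$, then apply Cauchy--Schwarz termwise. You are also right that nonnegativity of the $u_i^n$ is not needed, and your pointwise version covers the way the paper uses the lemma, with the nonnegative scalars $\|\bar v_h^n\|$ and $\|\nabla_h \bar u_h^n\|$.
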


\begin{theorem}\label{thm-error}
If \(u^n \in C^4(\Omega)\) and the regularity results {\rm(\ref{regularity})} and {\rm(\ref{regularity_v})} hold, then we get
\begin{align}\label{th-error}
\|\bar v_h^n\|+\|\nabla_h \bar u_h^n\| \leq C t_n^{\beta} h_1^2 + C \mathbb{M}^n \quad \text{for } 2N+1 \leq n \leq 2(K+1)N,
\end{align}
where \(\mathbb{M}^n\) is defined by
\begin{align*}%\label{th-error}
\mathbb{M}^n :=
\begin{cases}
N^{-\gamma}  (t_n-(\ell-1)\tau)^{\beta-1} & \text{if } 1 \leq \gamma < 2 - \beta, \\[4pt]
N^{\beta-2}  (t_n-(\ell-1)\tau)^{\beta-1} [1 + \ln((t_n-(\ell-1)\tau) / \rho)] & \text{if } \gamma = 2 - \beta, \\[4pt]
N^{\beta-2} (t_n-(\ell-1)\tau)^{\beta-(2-\beta)/\gamma}\ln(2N) & \text{if } 2 - \beta<\gamma \le (2-\beta)/\beta,\\[4pt] N^{\beta-2}(t_n-(\ell-1)\tau)^{\beta-(2-\beta)/\gamma}& \text{if } \gamma > (2-\beta)/\beta ,
\end{cases}
\end{align*}
for $2\ell N+1\leq n\leq2(\ell+1)N,~1\leq \ell\leq K$.
\end{theorem}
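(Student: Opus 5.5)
We combine the energy argument of Theorem \ref{thm-conv-1} with the discrete Gr\"onwall inequality of Lemma \ref{Gronwall-G}, and then bound the kernel sums with Lemma \ref{lem-BM} and Theorem \ref{theorem-M}.

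\textbf{Step 1 (energy inequality).} As in Theorem \ref{thm-conv-1}, apply $\nabla_h$ to the second equation of (\ref{error-system-1}). Take inner products with $\bar v_h^n$ and $-\nabla_h\bar u_h^n$ and add; the cross terms $(\Delta_h\bar u_h^n,\bar v_h^n)$ cancel. Instead of Lemma \ref{FD-ineq}, use Lemma \ref{lem-norm} componentwise, and then Lemma \ref{lem-ine} with $l=2$, applied to the pair $(\|\bar v_h^n\|,\|\nabla_h\bar u_h^n\|)$. With $E^n:=\sqrt{\|\bar v_h^n\|^2+\|\nabla_h\bar u_h^n\|^2}$ this gives
\[
(\bar\partial_t^\beta E^n)E^n\le \big(\|\mathcal R_v^n\|+\|R_s^n\|+\|\bar u_h^{n-2N}\|+\|\nabla_h\mathcal R_u^n\|\big)E^n .
\]
The delay term needs care. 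For $2N+1\le n\le 4N$ it vanishes, since $\bar u_h^{n-2N}=0$. For later $n$, $\|\bar u_h^{n-2N}\|\lesssim\|\nabla_h\bar u_h^{n-2N}\|\le E^{n-2N}$ by the discrete Poincar\'e inequality, and $E^{n-2N}$ is already controlled on the previous delay interval. So we argue by induction over $\ell$: on the $\ell$-th interval the delay term is a known forcing, bounded by $Ct_n^\beta h_1^2+C\mathbb M^{n-2N}$. We expect this to be dominated by the bound on the current interval, because $\mathbb M$ grows at most like a negative power of $t-(\ell-1)\tau$ shifted by $\tau$, which is bounded away from zero.

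\textbf{Step 2 (Gr\"onwall).} Lemma \ref{Gronwall-G} with $\Psi^n=0$, $E^{2N}=0$ and $\xi^n$ equal to the bracket above gives
\[
E^n\le \Gamma(2-\beta)\rho_n^\beta\sum_{j=2N+1}^nG_{n,j}\,\xi^j .
\]
The spatial part $\xi^j\supset Ch_1^2$ contributes $Ct_n^\beta h_1^2$ by Lemma \ref{lem-G}. The delay forcing contributes at most a multiple of $t_n^\beta$ times its bound, by the same lemma.

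\textbf{Step 3 (temporal truncation).} We need refined local bounds for $\mathcal R_u^j$ and $\mathcal R_v^j$, rather than the global ones of Lemma \ref{lem-r}. Using (\ref{regularity_v}) and the mesh property (\ref{t-2}), we will show, in the form required by Lemma \ref{lem-BM},
\[
|\mathcal R_v^j|+\|\nabla_h\mathcal R_u^j\|\lesssim\sum_{l=1}^{\ell}(t_j-(l-1)\tau)^{(l-1)\beta}\big(\rho/(t_j-(l-1)\tau)\big)^{\eta+1},\qquad \eta+1=\min\{\beta+1,(2-\beta)/\gamma\}.
\]
The $u$-part is no worse, since $u$ has the higher regularity exponent $\alpha=2\beta$. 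The derivation is the standard splitting of the $L1$ error into the first interval after each singular point and the remaining intervals; the factor $(t_j-(l-1)\tau)^{(l-1)\beta}$ reflects the improved regularity $(k+1)\beta-1$ of $v$ on later intervals.

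\textbf{Step 4 (reading off $\mathbb M^n$).} Lemma \ref{lem-BM} with $\eta_l=\eta$ then gives $E^n\lesssim\mathcal M^n_\eta$, and the cases of Theorem \ref{theorem-M} correspond to those of $\mathbb M^n$. Since $\rho\simeq N^{-\gamma}$:
\begin{itemize}
\item $\gamma<2-\beta$ gives $\eta>0$ and $\rho(t_n-(\ell-1)\tau)^{\beta-1}=N^{-\gamma}(\cdot)^{\beta-1}$.
\item $\gamma=2-\beta$ gives $\eta=0$ and the logarithmic factor.
\item $2-\beta<\gamma\le(2-\beta)/\beta$ gives $\beta-1\le\eta<0$, so $\rho^{(2-\beta)/\gamma}=N^{\beta-2}$ with power $\beta-(2-\beta)/\gamma$.
\item $\gamma>(2-\beta)/\beta$ falls under the $\eta<\beta-1$ case of Lemma \ref{case-1}, which holds on arbitrary meshes.
\end{itemize}
The $\ln(2N)$ in the third case absorbs the mismatch between the terms $l<\ell$ and the term $l=\ell$, since Lemma \ref{case-4} needs equal $\eta_l$. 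The remaining $l<\ell$ terms are bounded because $t_n-(l-1)\tau\ge\tau$.

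\textbf{Main obstacle.} We expect the hardest part to be Step 3, together with making the delay induction compatible with the kernel sums in Step 2. There the sharp local truncation bound must be proved near each shifted singularity, and the history contributions from earlier delay intervals must stay below $\mathbb M^n$. We would first try to bound the history terms crudely by $\rho^{\min\{\beta+1,(2-\beta)/\gamma\}}$ times constants, using $t_n-(l-1)\tau\ge\tau$ for $l<\ell$.
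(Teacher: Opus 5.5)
Your overall architecture is the paper's. It runs: energy identity, Lemmas \ref{lem-norm} and \ref{lem-ine}, Lemma \ref{Gronwall-G}, a cited-type local truncation bound fed into Lemma \ref{lem-BM} and Theorem \ref{theorem-M}, induction over delay intervals, and the four-case reading of $\mathcal{M}^n_{\eta}$. Using one exponent $\eta=\min\{\beta+1,(2-\beta)/\gamma\}-1$ for all $l$ is a harmless simplification: it only weakens the truncation bound, and it makes the equal-exponent hypothesis of Lemma \ref{case-4} automatic.

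The genuine gap is the delay term, and your reason for treating it as harmless is false. For $2\ell N+1\le j\le 2(\ell+1)N$, the index $j-2N$ lies in the $(\ell-1)$-st interval, and $t_{j-2N}-(\ell-2)\tau=t_j-(\ell-1)\tau$. So the inductive bound $\mathbb{M}^{j-2N}$ is singular at exactly the point $(\ell-1)\tau$ where the bound you are proving is singular; nothing is shifted away. Earlier intervals also contribute the same kind of spikes at each $(p-1)\tau$. Lemma \ref{lem-G} then only gives $\Gamma(2-\beta)\rho_n^\beta\sum_jG_{n,j}g^j\le t_n^\beta\max_{j\le n}g^j/\Gamma(1+\beta)$. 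The maximum is attained at $j=2\ell N+1$, where $t_j-(\ell-1)\tau=\rho$, and for $1\le\gamma<2-\beta$ it equals $N^{-\gamma}\rho^{\beta-1}\simeq N^{-\gamma\beta}$. So at a node with $t_n-(\ell-1)\tau\simeq 1$ you get only $N^{-\gamma\beta}$ instead of $N^{-\gamma}$. The same loss occurs for every $\gamma<(2-\beta)/\beta$: the spike is $N^{-\gamma\beta}$ up to a logarithm, while $\mathbb{M}^n$ away from the singular point is $N^{-\min\{\gamma,2-\beta\}}$ up to a logarithm. The induction therefore does not reproduce $\mathbb{M}^n$, and the local character of \eqref{th-error} is lost for all $\ell\ge 2$.

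What is missing is to keep the induction bound pointwise inside the kernel sum. You should estimate $\rho_n^\beta\sum_jG_{n,j}\mathbb{M}^{j-2N}$ with Lemma \ref{lem-B} and the barrier estimates behind Theorem \ref{theorem-M}, not with Lemma \ref{lem-G}; this is the route the paper takes. It closes because of the factor $\rho^\beta$ hidden in $\mathbb{M}$. With $s=t_j-(\ell-1)\tau$, one has $\rho s^{\beta-1}=\rho^\beta(\rho/s)^{1-\beta}$, i.e.\ forcing with exponent $-\beta\in[\beta-1,0)$. For this, Theorem \ref{theorem-M} returns $\rho^\beta\cdot\rho^{1-\beta}s^{2\beta-1}=\rho s^{2\beta-1}\lesssim\rho s^{\beta-1}$. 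More generally, $\rho^{\eta+1}s^{\beta-\eta-1}=\rho^\beta(\rho/s)^{(\eta-\beta)+1}$ returns $\rho^{\eta+1}s^{2\beta-\eta-1}$, via Lemma \ref{case-1} if $\eta-\beta<\beta-1$. One inversion of $\bar\partial_t^\beta$ thus gains a factor $s^\beta$, and the weight reproduces itself, which is exactly what the induction over $\ell$ needs. The unproved truncation bound in your Step 3 is at the same level of detail as the paper, which also only cites it.
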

\begin{proof}
By taking the inner product \((\cdot, \cdot)\) on both sides of the first two equalities in (\ref{error-system-1}) with \(\bar{v}^n\) and \(\Delta_hu^n\), respectively, then we obtain
\begin{align*}
&(\bar\partial_t^{\beta} \bar v_h^{n},\bar v_h^{n}) - (\Delta_h \bar u_h^{n},\bar v_h^{n}) + (\bar u_h^{n-2N},\bar v_h^{n})= (\mathcal{R}_v^{n}+R_s^n,\bar v_h^{n}), \\
&-(\nabla_h\bar v_h^{n},\nabla_h \bar u_h^{n})  = -(\bar\partial_t^{\beta} \nabla_h\bar u_h^{n},\nabla_h \bar u_h^{n})-(\nabla_h\mathcal{R}_u^{n},\nabla_h \bar u_h^{n}).
\end{align*}
Adding above equations, it comes that
\begin{align*} (\bar\partial_t^{\beta}  \bar v_h^{n},\bar v_h^{n}) + (\bar\partial_t^{\beta}  \nabla_h \bar u_h^{n},\nabla_h \bar u_h^{n}) + (\bar u_h^{n-2N},\bar v_h^{n}) = (\mathcal{R}_v^{n}+R_s^n,\bar v_h^{n})-(\nabla_h \mathcal{R}_u^{n},\nabla_h \bar u_h^{n}).
\end{align*}
Now applying Lemma \ref{lem-norm} and Cauchy-Schwartz inequality leads to
\begin{align*} \big(\bar\partial_t^{\beta} \|\bar v_h^n\|\big)\|\bar v_h^n\| + \big(\bar\partial_t^{\beta}\|\nabla_h \bar u_h^n\|\big)\|\nabla_h \bar u_h^n\|
\le \big(\|\mathcal{R}_v^{n}\|+\|R_s^{n}\|+\|\bar u_h^{n-2N}\|\big)\|\bar v_h^{n}\|+\|\nabla_h \mathcal{R}_u^{n}\|\|\nabla_h \bar u_h^{n}\|.
\end{align*}

Next, using Lemma \ref{lem-ine} and H\"{o}lder inequality, it holds that
\[
\begin{aligned}
&\left(\bar\partial_t^{\beta} \sqrt{\|\bar v_h^n\|^2 + \|\nabla_h\bar u_h^n\|^2} \right) \sqrt{\|\bar v_h^n\|^2 + \|\nabla_h\bar u_h^n\|^2} \\
\leq& \left(\|\mathcal{R}_v^{n}\|+\|R_s^{n}\|+\|\bar u_h^{n-2N}\|\right)\|\bar v_h^{n}\|+\|\nabla_h \mathcal{R}_u^{n}\|\|\nabla_h \bar u_h^{n}\| \\
\leq& \sqrt{\left(\|\mathcal{R}_v^{n}\|+\|R_s^{n}\|+\|\bar u_h^{n-2N}\|\right)^2+\|\nabla_h \mathcal{R}_u^{n}\|^2}\sqrt{\|\bar v_h^n\|^2 + \|\nabla_h\bar u_h^n\|^2}.
\end{aligned}
\]
This is equivalent to
\[
\bar\partial_t^{\beta} \sqrt{\|\bar v_h^n\|^2 + \|\nabla_h\bar u_h^n\|^2}  \leq \sqrt{\left(\|\mathcal{R}_v^{n}\|+\|R_s^{n}\|+\|\bar u_h^{n-2N}\|\right)^2+\|\nabla_h \mathcal{R}_u^{n}\|^2}.
\]
Invoking Lemma \ref{Gronwall-G} gives
\begin{align}\nonumber
 \sqrt{\|\bar v_h^n\|^2 + \|\nabla_h\bar u_h^n\|^2}
\leq & \Gamma(2-\beta) \rho_n^{\beta} \sum_{j=2N+1}^n G_{n,j} \sqrt{\left(\|\mathcal{R}_v^{j}\|+\|R_s^{j}\|+\|\bar u_h^{j-2N}\|\right)^2+\|\nabla_h \mathcal{R}_u^{j}\|^2}\\\label{con}
\leq & \Gamma(2-\beta) \rho_n^{\beta} \sum_{j=2N+1}^n G_{n,j} \left(\|\mathcal{R}_v^{j}\|+\|R_s^{j}\|+\|\bar u_h^{j-2N}\|+\|\nabla_h \mathcal{R}_u^{j}\|\right).
\end{align}
Similar to \cite[Lemma 5.4]{QingyangSi}, one has
\begin{align*}
\|\mathcal{R}_v^{j}\|+\|\nabla_h \mathcal{R}_u^{j}\|
\le & C\sum_{l=1}^{\ell}\big(t_j-(l-1)\tau\big)^{(l-1)\alpha}\Big(\rho/\big(t_j-(l-1)\tau\big)\Big)^{\min\{l\alpha+1, (2-\beta)/\gamma\}}\\
&+ C\sum_{l=1}^{\ell}\big(t_j-(l-1)\tau\big)^{(l-1)\beta}\Big(\rho/\big(t_j-(l-1)\tau\big)\Big)^{\min\{l\beta+1, (2-\beta)/\gamma\}}\\
\le & C\sum_{l=1}^{\ell}\big(t_j-(l-1)\tau\big)^{(l-1)\beta}\Big(\rho/\big(t_j-(l-1)\tau\big)\Big)^{\eta_l+1}
\end{align*}
for $2\ell N+1\leq j\le n\leq2(\ell+1)N,~1\leq \ell\leq K$, where $\eta_l=\min\{l\beta+1, (2-\beta)/\gamma\}-1$. Furthermore, applying Lemma \ref{L-3} yields $\|R_s^{j}\|\le Ch_1^2$. Thus, (\ref{con}) simplifies to
\begin{align*}
\sqrt{\|\bar v_h^n\|^2 + \|\nabla_h\bar u_h^n\|^2}
\leq & C\Gamma(2-\beta) \rho_n^{\beta}\bigg( \sum_{j=2N+1}^n G_{n,j} \big(h_1^2+\|\bar u_h^{j-2N}\|\big)\\
&+ \sum_{p=1}^{\ell-1}\sum_{j=2pN+1}^{2(p+1)N}G_{n,j}\sum_{l=1}^{p}\big(t_j-(l-1)\tau\big)^{(l-1)\beta}\big(\rho/(t_j-(l-1)\tau)\big)^{\eta_l+1}\\
&+\sum_{j=2\ell N+1}^{n}G_{n,j}\sum_{l=1}^{\ell}\big(t_j-(l-1)\tau\big)^{(l-1)\beta}\big(\rho/(t_j-(l-1)\tau)\big)^{\eta_l+1}\bigg)\\
\leq&  C t_n^{\beta} h_1^2+\Gamma(2-\beta) \rho_n^{\beta}\sum_{j=2 N+1}^{n}G_{n,j}\|\bar u_h^{j-2N}\|+C\mathcal{M}^n_{\eta_\ell},
\end{align*}
for $2\ell N+1\leq n\leq2(\ell+1)N,~1\leq \ell\leq K$, where the last inequality holds by Lemma \ref{lem-G} and Lemma \ref{lem-BM}.

Now we apply the mathematical induction to obtain the following result
\[
\sqrt{\|\bar v_h^n\|^2 + \|\nabla_h\bar u_h^n\|^2}\le C t_n^{\beta} h_1^2+C\mathcal{\tilde{M}}^n_{\eta_\ell}\quad \text{for } 2\ell N+1\leq n\leq2(\ell+1)N,~1\leq \ell\leq K,
\]
where
\begin{align*}
\mathcal{\tilde{M}}^n_{\eta_\ell} :=
\begin{cases}
\sum\limits_{l=1}^{\ell}\rho(t_n-(l-1)\tau\big)^{\beta-1}& \text{if } \eta_l > 0, \\
%\le \sum\limits_{l=1}^{\ell}\rho^{1-\beta}(t_n-(l-1)\tau\big)^{\beta-1}
\sum\limits_{l=1}^{\ell}\rho(t_n-(l-1)\tau\big)^{\beta-1}\big[1 + \ln\big((t_n-(\ell-1)\tau)/\rho\big)\big] & \text{if } \eta_l = 0, \\ \sum\limits_{l=1}^{\ell}\rho^{\eta_l+1}(t_n-(\ell-1)\tau\big)^{\beta-\eta_l-1} \ln(2N)   &\text{if }\beta-1\le\eta_l<0,
  \\
\sum\limits_{l=1}^{\ell}\rho^{\eta_l+1}(t_n-(l-1)\tau\big)^{\beta-\eta_l-1} & \text{if } \eta_l <  \beta-1.
\end{cases}
\end{align*}
First, when $2N+1\le n\le 4N$ (i.e., $\ell=1$),  $\|\bar u_h^{j-2N}\|=0\text{ for }2N+1\le j\le n$, then we have
\begin{align*}
\sqrt{\|\bar v_h^n\|^2 + \|\nabla_h\bar u_h^n\|^2}
\leq C t_n^{\beta} h_1^2+C\mathcal{M}^n_{\eta_1}\le C t_n^{\beta} h_1^2+\mathcal{\tilde{M}}^n_{\eta_1}.
\end{align*}
Assume that $\sqrt{\|\bar v_h^n\|^2 + \|\nabla_h\bar u_h^n\|^2} \leq C t_n^{\beta} h_1^2+\mathcal{\tilde{M}}^n_{\eta_{l}}$ holds for $2l N+1\le n \le 2(l+1)N,~l=1,\ldots,\ell-1.$
Next, we consider the case $2\ell N+1\le n \le 2(\ell+1)N$, and we get
\begin{align}\nonumber
&\Gamma(2-\beta) \rho_n^{\beta}\sum_{j=2 N+1}^{n}G_{n,j}\|\bar u_h^{j-2N}\|\\\label{delay}
\le& C t_n^{\beta} h_1^2+C\Gamma(2-\beta) \rho_n^{\beta}\Big(\sum_{p=2}^{\ell-1}\sum_{j=2pN+1}^{2(p+1)N}G_{n,j}\mathcal{\tilde{M}}^{j-2N}_{\eta_{p-1}}+
\sum_{j=2\ell N+1}^{n}G_{n,j}\mathcal{\tilde{M}}^{j-2N}_{\eta_{\ell-1}}\Big).
\end{align}
Then, in order to estimate the second term of the right hand side in (\ref{delay}), we give that $\sum_{l=1}^{\ell}\rho(t_j-(l-1)\tau\big)^{\beta-1}\le \sum_{l=1}^{\ell}\big(\rho/(t_j-(l-1)\tau)\big)^{-\beta+1}$ when $\eta_l>0$  and $\sum_{l=1}^{\ell}\rho(t_j-(l-1)\tau\big)^{\beta-1}\big[1 + \ln\big((t_n-(\ell-1)\tau)/\rho\big)\big]\le \sum_{l=1}^{\ell}\big(\rho/(t_j-(l-1)\tau)\big)^{-\beta+1}\ln(2N)$ when $\eta_l=0$ can be integrated into the case $\beta-1\le\eta_l<0$.
And also because here $\eta_1=\cdots=\eta_\ell$ when $\eta_l< 0,~l=1,2,\ldots,\ell$, then we have
\begin{align*}
&\Gamma(2-\beta) \rho_n^{\beta}\Big(\sum_{p=2}^{\ell-1}\sum_{j=2pN+1}^{2(p+1)N}G_{n,j}\mathcal{\tilde{M}}^{j-2N}_{\eta_{p-1}}+ \sum_{j=2\ell N+1}^{n}G_{n,j}\mathcal{\tilde{M}}^{j-2N}_{\eta_{\ell-1}}\Big)\\
=& \Gamma(2-\beta) \rho_n^{\beta}\Big(\sum_{p=2}^{\ell-1}\sum_{j=2pN+1}^{2(p+1)N}G_{n,j}\mathcal{\tilde{M}}^{j}_{\eta_{p}}+ \sum_{j=2\ell N+1}^{n}G_{n,j}\mathcal{\tilde{M}}^{j}_{\eta_{\ell}}\Big)\\
\le& \Gamma(2-\beta) \rho_n^{\beta}\Big(\sum_{p=1}^{\ell-1}\sum_{j=2pN+1}^{2(p+1)N}G_{n,j}\mathcal{\tilde{M}}^{j}_{\eta_{p}}+\sum_{j=2\ell N+1}^{n}G_{n,j}\mathcal{\tilde{M}}^{j}_{\eta_{\ell}}\Big)\quad\text{for } \eta_l< 0,
\end{align*}
where the last inequality holds for $G_{n,j}>0$ and $\mathcal{\tilde{M}}^{j}_{\eta_{\ell}}>0$.
Finally, it follows from Lemmas \ref{lem-B} and Theorem \ref{theorem-M} that
\begin{align*}
\Gamma(2-\beta) \rho_n^{\beta}\Big(\sum_{p=1}^{\ell-1}\sum_{j=2pN+1}^{2(p+1)N}G_{n,j}\mathcal{\tilde{M}}^{j}_{\eta_{p}}+ \sum_{j=2\ell N+1}^{n}G_{n,j}\mathcal{\tilde{M}}^{j}_{\eta_{\ell}}\Big)\le C\mathcal{\tilde{M}}^n_{\eta_\ell} \quad\text{for } \eta_l< 0.
\end{align*}
Then, it holds that
\begin{align*} \sqrt{\|\bar v_h^n\|^2 + \|\nabla_h\bar u_h^n\|^2}
\le C t_n^{\beta} h_1^2+C\mathcal{\tilde{M}}^n_{\eta_\ell} \quad \text{for } 2\ell N+1\le n \le 2(\ell+1)N.
\end{align*}
Therefore, the claimed result is obtained.

Now we estimate \( \mathcal{\tilde{M}}^{n}_{\eta_\ell} \) by following four different cases.
\begin{enumerate}
\item  If \( 1 \leq \gamma < 2 - \beta \), then one has \( (2 - \beta)/\gamma > 1 \) and \( l\beta + 1 > 1 \). Hence, we get \( \eta_l > 0 \). Combining this with \( \rho \simeq N^{-\gamma} \), we get \( \mathcal{\tilde{M}}^n_{\eta_\ell}  \leq CN^{-\gamma}(t_n-(\ell-1)\tau)^{\beta-1} \).

\item If \( \gamma = 2 - \beta \), then we have \( (2 - \beta)/\gamma = 1 \). However, \(l\beta + 1 > 1 \), so \( \eta_l = 0 \). By using \( \rho \leq CN^{-\gamma} = CN^{\beta-2} \), we obtain \( \mathcal{\tilde{M}}^n_{\eta_\ell}  \leq CN^{\beta-2}(t_n-(\ell-1)\tau)^{\beta-1}[1 + \ln((t_n-(\ell-1)\tau)/\rho)] \).

\item If \( 2 - \beta <\gamma \le (2-\beta)/\beta \), then one has \( (2 - \beta)/\gamma < 1 \). While \(l\beta + 1 > 1 \), so \( \eta_1=\cdots=\eta_\ell < 0 \). Thus
\begin{align*}
\mathcal{\tilde{M}}^n_{\eta_\ell}  \leq &C\rho (t_n-(\ell-1)\tau)^{\beta-1}\big(\rho/(t_n-(\ell-1)\tau)\big)^{(2-\beta)/\gamma-1}\ln(2N)\\
=& C\rho^{(2-\beta)/\gamma}(t_n-(\ell-1)\tau)^{\beta-(2-\beta)/\gamma}\ln(2N)\\
\leq& CN^{\beta-2}(t_n-(\ell-1)\tau)^{\beta-(2-\beta)/\gamma}\ln(2N).
\end{align*}

\item If \(\gamma > (2-\beta)/\beta \), then one has \( (2 - \beta)/\gamma < 1 \). While \(l\beta + 1 > 1 \), so \( \eta_l < 0 \). Thus
\begin{align*} \mathcal{\tilde{M}}^n_{\eta_\ell}  \leq CN^{\beta-2}(t_n-(\ell-1)\tau)^{\beta-(2-\beta)/\gamma}.     \end{align*}
\end{enumerate}
Hence, the above four cases show that \( \mathcal{\tilde{M}}^n_{\eta_\ell} \leq C\mathbb{M}^n \) holds for $2\ell N+1\leq n\leq2(\ell+1)N,~1\leq \ell\leq K$. Finally, our proof is completed.
\end{proof}

\begin{corollary}[\text{Local error}]
Suppose \( t_n \geq t_*>(\ell-1)\tau \), where \( t_* \) represents a fixed positive constant and  $(\ell-1)\tau< t_n\leq \ell\tau,~1\leq \ell\leq K$. If the assumption given in Theorem \ref{thm-error} holds, one has
\[
\max_{t_* \leq t_n \leq \ell\tau} \|\bar v^n\| + \max_{t_* \leq t_n \leq \ell\tau} \|\nabla_h \bar u_h^n\| \leq
\begin{cases}
C\big(h_1^2  + N^{-\gamma}\big) & \text{if } 1\le\gamma < 2 - \beta, \\
C\big(h_1^2 +  N^{-(2-\beta)}\ln (2N)\big) & \text{if } 2 - \beta\le\gamma \le (2-\beta)/\beta, \\  C\big(h_1^2 +  N^{-(2-\beta)}\big) & \text{if } \gamma > (2-\beta)/\beta.
\end{cases}
\]
%for $2\ell N+1\leq n\leq2(\ell+1)N,~1\leq \ell\leq K$.
\end{corollary}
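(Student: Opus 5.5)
The corollary follows directly from Theorem \ref{thm-error}: we evaluate the bound (\ref{th-error}) at nodes with $t_n\ge t_*$ and take the maximum over $t_*\le t_n\le \ell\tau$. Since $t_n\le K\tau$ is bounded, the spatial term $Ct_n^\beta h_1^2$ is at most $Ch_1^2$ uniformly. The work lies in controlling $\mathbb{M}^n$ under the restriction $t_n-(\ell-1)\tau\ge t_*-(\ell-1)\tau=:\delta_*>0$. Here $\delta_*$ is a fixed positive constant independent of $N$, and also $t_n-(\ell-1)\tau\le\tau$. So on this range $t_n-(\ell-1)\tau\simeq 1$, and every power of it is bounded above and below by constants depending only on $\delta_*,\tau,\beta,\gamma$.

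We then go through the four cases of $\mathbb{M}^n$.
\begin{itemize}
\item For $1\le\gamma<2-\beta$, the factor $(t_n-(\ell-1)\tau)^{\beta-1}\le \delta_*^{\beta-1}$, which gives $CN^{-\gamma}$.
\item For $\gamma=2-\beta$, we use $(t_n-(\ell-1)\tau)^{\beta-1}\le\delta_*^{\beta-1}$ together with $1+\ln((t_n-(\ell-1)\tau)/\rho)\le 1+\ln(\tau/\rho)$. Since $\rho\simeq N^{-\gamma}$ by (\ref{t-2}), the logarithm is $\lesssim 1+\gamma\ln N\lesssim\ln(2N)$. This yields $CN^{-(2-\beta)}\ln(2N)$, so this case merges with the next one.
\item For $2-\beta<\gamma\le(2-\beta)/\beta$, the exponent $\beta-(2-\beta)/\gamma$ may have either sign. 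We bound $(t_n-(\ell-1)\tau)^{\beta-(2-\beta)/\gamma}\le\max\{\delta_*,\tau\}^{|\beta-(2-\beta)/\gamma|}$ (or $\max\{\delta_*^{e},\tau^{e}\}$ with $e$ the exponent), which gives $CN^{\beta-2}\ln(2N)$.
\item For $\gamma>(2-\beta)/\beta$, the same argument without the logarithm gives $CN^{\beta-2}$.
\end{itemize}

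Taking the maximum over $n$ does not change the constants, because each bound is uniform in $n$ over the range. Note that the $\ell$ on the right-hand side is the index with $t_n\in((\ell-1)\tau,\ell\tau]$. The sum over earlier delay intervals $l<\ell$, which appears in $\mathcal{M}$, has already been absorbed into $\mathbb{M}^n$ in Theorem \ref{thm-error}. The only point needing care is the logarithmic factor when $\gamma=2-\beta$: we must check that $\ln((t_n-(\ell-1)\tau)/\rho)$ is indeed $O(\ln N)$, which uses $\rho\gtrsim N^{-\gamma}$ from (\ref{t-2}). Apart from that, the argument only needs the positive lower bound $\delta_*$, which guarantees that negative powers of $t_n-(\ell-1)\tau$ remain bounded.
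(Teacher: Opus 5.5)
Your proposal is correct and is essentially the paper's argument: absorb $t_n^\beta\le (K\tau)^\beta$ into the constant, use that $t_n-(\ell-1)\tau$ lies in the fixed interval $[t_*-(\ell-1)\tau,\tau]$ together with $\rho\simeq N^{-\gamma}$ to reduce each case of $\mathbb{M}^n$ in Theorem \ref{thm-error} to the stated rate, and fold $\gamma=2-\beta$ into the logarithmic case. One small slip: for $2-\beta<\gamma\le(2-\beta)/\beta$ the exponent $\beta-(2-\beta)/\gamma$ lies in $(\beta-1,0]$, so the correct bound is $(t_*-(\ell-1)\tau)^{\beta-(2-\beta)/\gamma}$ rather than $\max\{\delta_*,\tau\}^{|\beta-(2-\beta)/\gamma|}$, but your parenthetical alternative already gives this.
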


\begin{proof}
If \( 1 \leq \gamma<2 - \beta \), one has \( \max_{t_* \leq t_n \leq \ell\tau} \mathbb{M}^n \leq CN^{-\gamma} \). For \( \gamma > (2-\beta)/\beta \), we have \( \max_{t_* \leq t_n \leq \ell\tau} \mathbb{M}^n \leq CN^{-(2-\beta)} \).
If \( 2- \beta\le\gamma \le (2-\beta)/\beta \), then we have \( \max_{t_* \leq t_n \leq \ell\tau} \mathbb{M}^n \leq CN^{-(2-\beta)} \ln (2N )\). Hence, combining these results with (\ref{th-error}) of Theorem \ref{thm-error}, our proof is finished.
\end{proof}

%\subsubsection{The local error analysis of SYM $L1$-ADI scheme (\ref{num-scheme-4})}

\section{Numerical experiments}\label{sec-num}
In this section, we carry out numerical experiments to illustrate our theoretical statements and all our tests are done in MATLAB with a laptop. Let $M_1=M_2=M$ for two-dimensional problem. Let
$$E_{H^1}(M,N)=\max_{2N+1\leq n\leq 2(K+1)N}\|u_h^n-U_h^n\|_{H^1},~E(M,N)=\|u_h^n-U_h^n\|_{H^1}$$ where $u_h^n$ and $U_h^n$ are the reference solution and the numerical solution, respectively.
Furthermore, the temporal convergence order and spatial convergence order, denoted by
$$\mbox{Order1}=\log_2\bigg(\frac{E_{H^1}(M,N/2)}{E_{H^1}(M,N)}\bigg)
 ~~\mbox{and}~~ \mbox{Order2}=\log_2\bigg(\frac{E_{H^1}(M/2,N)}{E_{H^1}(M,N)}\bigg),$$
respectively, are reported. Meanwhile, the local convergence order in temporal direction is denoted by $$\mbox{Rate}=\log_2\bigg(\frac{E(M,N/2)}{E(M,N)}\bigg).$$

\iffalse
Let us revisit the foundational work in which the SFOR method was originally proposed \cite{LyuP2022SFOR}. As established in Remark 2.2 of that paper, auxiliary variables were  precisely introduced to avoid explicitly representing the singular temporal kernel $a_1(x)\omega_{2-\alpha}(t)$. Based on this, they adopted the following numerical framework:
\begin{equation}\label{eq-gov-ref}
\begin{cases}
  \partial_t^\beta \textbf{v}  - \Delta \textbf{u} = t\Delta a_1(x)+g(x,t), & (x,t) \in \Omega \times (0,T), \\
  \textbf{v}  = \partial_t^\beta \textbf{u}, & (x,t) \in \Omega \times (0,T), \\
  \textbf{u}(x,0) = \textbf{v}(x,0)=0, & x \in \Omega, \\
  \textbf{u}(x,t) = \textbf{v}(x,t)=0, & (x,t) \in \partial \Omega \times (0,T),
\end{cases}
\end{equation}
where $u=\textbf{u}+ta_1(x),~\frac12<\beta<1$. The limitation of (\ref{eq-gov-ref}) is $(\Delta a_1(x),\phi(x))$, $x\in\Omega$ must exist, where $\phi(x)$ is a basis function from a finite element space.\fi

The $L1$ and SFOR methods are employed to simulate the problems (\ref{eq-gov}). To rigorously validate the theoretical properties of the proposed schemes, we conduct two carefully designed numerical experiments. In our numerical framework (\ref{eq-gov-trans}), our formula does not contain any redundant singular kernel terms by introducing the auxiliary variable $v = \partial_t^{\frac{\alpha}{2}} u$. First, we find that the optimal convergence order of $L1$ scheme is up to $2-\alpha/2$, which improves upon the classical rate of $2-\alpha$ under identical smoothness conditions, when the mesh parameter of graded meshes is $\gamma=\frac{4-\alpha}{\alpha}$ for $\epsilon<\alpha<1$, where $\epsilon$ is a fixed positive constant. Theoretically, the scheme becomes inapplicable as $\alpha\rightarrow0^+$ because the mesh parameter $\gamma=\frac{4-\alpha}{\alpha}$ diverges to $\infty$, rendering the underlying functional framework ill-defined. But we can yield optimal convergence rates away from singular time points under milder grading parameter $\gamma=2-\alpha/2$,  which is particularly well-suited for cases with small $\alpha$. And we can observe from Table \ref{table1} that the SYM $L1$ scheme (\ref{num-scheme-1}) at $t=1$ achieves the optimal convergence order of $2-\alpha/2$ even when \(\alpha = 0.01\), thereby compensating for the shortcomings of the global convergence analysis.

Next, we examine the temporal convergence behavior of both methods under varying fractional order $\alpha \in (0,1)$ and mesh grading parameter $\gamma\geq 1$, with results summarized in Tables \ref{table-test1}-\ref{table-test7}. First, Tables \ref{table-test1}-\ref{table-test3} confirm that the SYM $L1$ scheme (\ref{num-scheme-1}) achieves the predicted convergence rate of $2-\alpha/2$ when $\gamma = \frac{4-\alpha}{\alpha}$—a value derived from our error analysis to compensate for solution singularity. In contrast, under uniform temporal discretization ($\gamma=1$), the observed convergence order drops below one, corroborating the necessity of graded meshes for optimal accuracy. Second, we can observe from Table \ref{table-test4} that the optimal convergence order of the SYM $L1$ scheme (\ref{num-scheme-1}) is $2-\alpha/2$, while that of the conventional $L1$ scheme is $2-\alpha$.
This clearly demonstrates the superior performance of SYM $L1$ scheme (\ref{num-scheme-1}) is better than that of the standard $L1$ scheme. Third, we apply the SYM $L1$-ADI scheme (\ref{num-scheme-4}) to system (\ref{eq-gov-trans}) using its theoretically optimal mesh parameters with different parameter $\alpha$; the resulting errors, reported in Table \ref{table-test6}, align precisely with the sharp convergence rates established in our error bounds. Finally, the spatial convergence orders of SYM $L1$-ADI scheme (\ref{num-scheme-4}) and SYM $L1$-ADI compact scheme (\ref{COM-ADI1})-(\ref{COM-ADI2}) which is a spatial fourth-order accurate numerical scheme, presented in Table \ref{table-test7}, further validate the robustness of our theoretical convergence results.

\iffalse
\begin{remark}\label{ill-ref}
The point is \textbf{v} become more regular. Following the idea of Lemma \ref{thm-reg-L2-v}, one has
$\|\partial_t^m\textbf{v}\|\leq Ct^{1+\beta-m}\|a_1\|$, $m=0,1,2,3$. Combining the analysis in Lemma \ref{thm-conv-1}, it implies that the mesh parameter $r=2$ is enough.
\end{remark}
\fi

\begin{example}\label{ex-test}
Problems {\rm(\ref{eq-gov})} with $\Omega=(0,\pi)$, $\tau=0.5$, $K=2$,
$a_0(x,t)=\sin(x)\exp(t)$ and $f(x,t)=\sin(x)t^2$.
 We use the numerical solution with the size of the space grids $h_1=\frac{\pi}{100}$ and $N=400$ being the number of partitions in the time grids as the reference solution for SYM $L1$ scheme {\rm(\ref{num-scheme-1})}.
\end{example}

\begin{table}[!ht]
\caption{Temporal accuracy at $t_n=1$ of SYM $L1$ scheme (\ref{num-scheme-1}) for Example \ref{ex-test} with $\alpha=0.01$.}\label{table1}  \renewcommand{\arraystretch}{1}
\def\temptablewidth{0.95\textwidth}
\begin{center}
\begin{tabular*}
{\temptablewidth}{@{\extracolsep{\fill}}lcccccc}\hline
$N$ &\multicolumn{2}{c}{$\gamma=1$} &\multicolumn{2}{c}{$\gamma=2-\alpha/2$} &\multicolumn{2}{c}{$\gamma=2$}    \\
\cline{2-3}\cline{4-5}\cline{6-7}
&$E(M,N)$          &Rate     &$E(M,N)$    &Rate    &$E(M,N)$  &Rate     \\ \hline
10 &3.9828e-05    &-  &1.5117e-06   &-   &1.4530e-06  &-    \\
20 &1.9805e-05    &1.0079  &3.8334e-07   &1.9795   &3.6489e-07  &1.9935  \\
40 &9.4982e-06    &1.0602  &9.7634e-08   &1.9731   &9.2101e-08  &1.9862    \\
80 &4.2519e-06    &1.1596  &2.4336e-08   &2.0043   &2.2757e-08  &2.0169    \\\hline
Optimal Order  &\multicolumn{6}{c}{1.995} \\ \hline
\end{tabular*}
\end{center}
\end{table}

\begin{table}[!ht]
\caption{Temporal accuracy of
SYM $L1$ scheme (\ref{num-scheme-1}) for Example \ref{ex-test} with $\alpha=0.4$.}\label{table-test1}
\renewcommand{\arraystretch}{1}
\def\temptablewidth{0.95\textwidth}
\begin{center}
\begin{tabular*}{\temptablewidth}{@{\extracolsep{\fill}}lcccccc}\hline
$N$ &\multicolumn{2}{c}{$\gamma=1$} &\multicolumn{2}{c}{$\gamma=\frac{2-\alpha/2}{\alpha}$} &\multicolumn{2}{c}{$\gamma=5$}    \\
\cline{2-3}\cline{4-5}\cline{6-7}
& $E_{H^1}(M,N)$         &Order1     &$E_{H^1}(M,N)$    &Order1    &$E_{H^1}(M,N)$ &Order1     \\ \hline
10 &7.3791e-02    &-  &7.5740e-03   &-   &7.0186e-03  &-    \\
20 &5.9422e-02    &0.3125  &2.4709e-03   &1.6160   &2.2424e-03  &1.6461  \\
40 &4.5836e-02    &0.3745  &7.7051e-04   &1.6812   &6.9502e-04  &1.6899    \\
80 &3.2802e-02    &0.4827  &2.2941e-04   &1.7479   &2.0668e-04  &1.7497    \\\hline
Optimal Order  &\multicolumn{6}{c}{1.8} \\ \hline
\end{tabular*}
\end{center}
\end{table}

\begin{table}[!ht]
\caption{Temporal accuracy of
SYM $L1$ scheme (\ref{num-scheme-1}) for Example \ref{ex-test} with $\alpha=0.6$.}\label{table-test2}
\renewcommand{\arraystretch}{1}
\def\temptablewidth{0.95\textwidth}
\begin{center}
 \begin{tabular*}{\temptablewidth}{@{\extracolsep{\fill}}lcccccc}\hline
 $N$ &\multicolumn{2}{c}{$\gamma=1$} &\multicolumn{2}{c}{$\gamma=\frac{2-\alpha/2}{\alpha}$} &\multicolumn{2}{c}{$\gamma=\frac{4-\alpha}{\alpha}$}    \\
 \cline{2-3}\cline{4-5}\cline{6-7}
 & $E_{H^1}(M,N)$         &Order1     &$E_{H^1}(M,N)$    &Order1   &$E_{H^1}(M,N)$ &Order1     \\ \hline
 10 &5.6678e-02    &-  &9.1953e-03   &-   &1.4380e-02  &-    \\
 20 &3.8046e-02    &0.5751  &3.1492e-03   &1.5459   &4.9337e-03  &1.5433  \\
 40 &2.4315e-02    &0.6459  &1.0198e-03   &1.6267   &1.6325e-03  &1.5955    \\
 80 &1.4234e-02    &0.7725  &3.1268e-04   &1.7056   &5.1415e-04  &1.6669    \\\hline
 Optimal Order  &\multicolumn{6}{c}{1.7} \\ \hline
\end{tabular*}
\end{center}
\end{table}

\begin{table}[!ht]
\caption{Temporal accuracy of
SYM $L1$ scheme (\ref{num-scheme-1}) for Example \ref{ex-test} with $\alpha=0.8$.}\label{table-test3}
\renewcommand{\arraystretch}{1}
\def\temptablewidth{0.95\textwidth}
\begin{center}
\begin{tabular*}
{\temptablewidth}{@{\extracolsep{\fill}}lcccccc}\hline
$N$ &\multicolumn{2}{c}{$\gamma=1$} &\multicolumn{2}{c}{$\gamma=\frac{2-\alpha/2}{\alpha}$} &\multicolumn{2}{c}{$\gamma=\frac{4-\alpha}{\alpha}$}    \\
\cline{2-3}\cline{4-5}\cline{6-7}
& $E_{H^1}(M,N)$         &Order1     &$E_{H^1}(M,N)$    &Order1    &$E_{H^1}(M,N)$ &Order1     \\ \hline
10 &3.2927e-02    &-  &1.1184e-02   &-   &1.7423e-02  &-    \\
20 &1.9054e-02    &0.7892  &4.0923e-03   &1.4504   &6.3047e-03  &1.4665  \\
40 &1.0482e-02    &0.8623  &1.4350e-03   &1.5118   &2.1563e-03  &1.5479    \\
80 &5.2938e-03    &0.9855  &4.7405e-04   &1.5980   &6.9759e-04  &1.6281    \\\hline
Optimal Order  &\multicolumn{6}{c}{1.6} \\ \hline
\end{tabular*}
\end{center}
\end{table}

\begin{table}[!ht]
\begin{center}
\caption{Convergence orders in temporal direction for Example \ref{ex-test} with $\alpha=0.99$.}\label{table-test4}
\renewcommand{\arraystretch}{1.0}
\def\temptablewidth{0.9\textwidth}
{\rule{\temptablewidth}{0.7pt}}
 \begin{tabular*}{\temptablewidth}{@{\extracolsep{\fill}}lcccccccc}
 $\gamma$ &$N$  &\multicolumn{2}{c}{SYM $L1$ Scheme (\ref{num-scheme-1})} &\multicolumn{2}{c}{$L1$ Scheme \cite{Delay01-Tan}} \\
 \cline{3-4}\cline{5-6}
    &        & $E_{H^1}(M,N)$     &Order1     &$E_{H^1}(M,N)$    &Order1  \\\hline
 $\frac{2-\alpha}{\alpha}$
    &10     & 2.2447e-02      & -     & 3.5700e-02    & -  \\
    &20     & 1.1532e-02      & 0.9609     & 1.7450e-02    & 1.0327  \\
    &40     & 5.7132e-03      & 1.0133     & 8.2536e-03    & 1.0801 \\
    &80     & 2.6287e-03      & 1.1200     & 3.6552e-03    & 1.1751
    \\\hline
 $\frac{2-\alpha/2}{\alpha}$
    &10     & 1.4795e-02     & -      & 4.2783e-02    & -  \\
    &20     & 5.8491e-03     & 1.3388      & 2.1018e-02    & 1.0254  \\
    &40     & 2.2384e-03     & 1.3858      & 9.9678e-03    & 1.0763 \\
    &80     & 8.1483e-04     & 1.4579      & 4.4212e-03    & 1.1728
     \\\hline
  \multicolumn{2}{c}{Optimal Order}
 &\multicolumn{2}{c}{1.505}&\multicolumn{2}{c}{1.01}
 \end{tabular*}
 {\rule{\temptablewidth}{0.7pt}}
 \end{center}
 \end{table}

\iffalse
\begin{table}[!ht]
\caption{Temporal accuracy at $t=1$ of SYM $L1$ scheme (\ref{num-scheme-1}) for Example \ref{ex-test} with $\alpha=0.01$.}\label{table1}  \renewcommand{\arraystretch}{1}
\def\temptablewidth{0.95\textwidth}
\begin{center}
\begin{tabular*}
{\temptablewidth}{@{\extracolsep{\fill}}lcccccc}\hline
$N$ &\multicolumn{2}{c}{$\gamma=1$} &\multicolumn{2}{c}{$\gamma=2-\alpha/2$} &\multicolumn{2}{c}{$\gamma=2$}    \\
\cline{2-3}\cline{4-5}\cline{6-7}
&$E(M,N)$          &Rate     &$E(M,N)$    &Rate    &$E(M,N)$  &Rate     \\ \hline
5  &7.8177e-05    &-       &6.1966e-06   &-        &6.0220e-06  &-  \\
10 &3.9828e-05    &0.9730  &1.5117e-06   &2.0353   &1.4530e-06  &2.0512    \\
20 &1.9805e-05    &1.0079  &3.8334e-07   &1.9795   &3.6489e-07  &1.9935  \\
40 &9.4982e-06    &1.0602  &9.7634e-08   &1.9731   &9.2101e-08  &1.9862    \\
80 &4.2519e-06    &1.1596  &2.4336e-08   &2.0043   &2.2757e-08  &2.0169    \\\hline
Optimal Order  &\multicolumn{6}{c}{1.995} \\ \hline
\end{tabular*}
\end{center}
\end{table}
\fi

\begin{example}\label{ex2}
Problems {\rm(\ref{eq-gov})} with $\Omega=(0,\pi)\times(0,\pi)$, $\tau=0.25$, $K=2$,
$a_0(x,y,t)=\sin(x)\sin(y)\exp(t)$ and $f(x,y,t)=\sin(x)\sin(y)\cos(t)$.
We use the numerical solution with the size of the space grids $h_1=h_2=\frac{\pi}{100}$ and $N=400$ being the number of partitions in the time grids as the reference solution for SYM $L1$-ADI scheme {\rm(\ref{num-scheme-4})}. Meanwhile, the numerical solution with $N=100$ and $M_1=M_2=400$ is the reference solution for SYM $L1$-ADI compact scheme {\rm(\ref{COM-ADI1})-(\ref{COM-ADI2})}.
\end{example}

\iffalse
\begin{table}[!ht]
\caption{Temporal accuracy of
SYM $L1$-ADI scheme (\ref{num-scheme-4}) for Example \ref{ex2} with $r=1$.}\label{table-test5}
\renewcommand{\arraystretch}{1}
\def\temptablewidth{0.95\textwidth}
\begin{center}
 \begin{tabular*}{\temptablewidth}{@{\extracolsep{\fill}}lcccccc}\hline
 $N$ &\multicolumn{2}{c}{$\alpha=0.4$} &\multicolumn{2}{c}{$\alpha=0.6$} &\multicolumn{2}{c}{$\alpha=0.8$}    \\
 \cline{2-3}\cline{4-5}\cline{6-7}
 & $E_{H^1}(M,N)$         &Order1     &$E_{H^1}(M,N)$    &Order1    &$E_{H^1}(M,N)$ &Order1     \\ \hline
 10 &4.1365e-01     &-        &3.4786e-01   &-        &1.8639e-01  &-  \\
 20 &3.6118e-01     &0.1957   &2.4262e-01   &0.5198   &1.1052e-01  &0.7541    \\
 40 &2.9812e-01     &0.2768   &1.5928e-01   &0.6071   &6.1666e-02  &0.8417 \\
 80 &2.2562e-01     &0.4020   &9.4997e-02   &0.7456   &3.1410e-02  &0.9733
\\ \hline
\end{tabular*}
\end{center}
\end{table}
\fi

\begin{table}[!ht]
\caption{Temporal accuracy of
SYM $L1$-ADI scheme (\ref{num-scheme-4}) for Example \ref{ex2} with $\gamma=3/2$.}\label{table-test6}
\renewcommand{\arraystretch}{1}
\def\temptablewidth{0.95\textwidth}
\begin{center}
 \begin{tabular*}{\temptablewidth}{@{\extracolsep{\fill}}lcccccc}\hline
 $N$ &\multicolumn{2}{c}{$\alpha=0.4$} &\multicolumn{2}{c}{$\alpha=0.6$} &\multicolumn{2}{c}{$\alpha=0.8$}    \\
 \cline{2-3}\cline{4-5}\cline{6-7}
 & $E_{H^1}(M,N)$         &Order1     &$E_{H^1}(M,N)$    &Order1    &$E_{H^1}(M,N)$ &Order1     \\ \hline
 10 &3.3785e-01     &-        &2.0099e-01   &-        &9.5773e-02  &-  \\
 20 &2.5784e-01     &0.3899   &1.1273e-01   &0.8343   &4.4486e-02  &1.1063    \\
 40 &1.8435e-01     &0.4841   &6.0033e-02   &0.9090   &1.9478e-02  &1.1915 \\
 80 &1.2099e-01     &0.6076   &2.9415e-02   &1.0292   &7.9851e-03  &1.2865
     \\\hline
Optimal Order &\multicolumn{2}{c}{0.6}  &\multicolumn{2}{c}{0.9}&\multicolumn{2}{c}{1.2}\\ \hline
\end{tabular*}
\end{center}
\end{table}

 \begin{table}[!htbp]
\begin{center}
\caption{Spacial accuracy of ADI and compact ADI schemes  for Example \ref{ex2} with $\alpha=0.6$ and $\gamma=3$.}
 \label{table-test7}
\renewcommand{\arraystretch}{0.95}
\def\temptablewidth{0.8\textwidth}
{\rule{\temptablewidth}{0.7pt}}
 \begin{tabular*}{\temptablewidth}{@{\extracolsep{\fill}}lcccc}
 $M$ &\multicolumn{2}{c}{ADI} &\multicolumn{2}{c}{Compact ADI}\\
 \cline{2-3}\cline{4-5}
 & $E_{H^1}(M,N)$    &Order2 & $E_{H^1}(M,N)$    &Order2\\\hline
 10  & 4.5207e-03    & -     & 2.2400e-05    & -\\
 20  & 1.1784e-03    & 1.9397 & 1.4590e-06    & 3.9405\\
 40  & 2.9555e-04    & 1.9954 & 9.2396e-08    & 3.9810\\
 80  & 7.1841e-05    & 2.0405 & 5.7597e-09    & 4.0038\\ \hline
 Theoretical Order  &\multicolumn{2}{c}{2} &\multicolumn{2}{c}{4}
 \end{tabular*}
 {\rule{\temptablewidth}{0.7pt}}
 \end{center}
 \end{table}

\iffalse
\begin{table}[!ht]   \def\temptablewidth{0.95\textwidth}   \begin{center}
\caption{Temporal accuracy at $t_{6N}=1$ of SYM $L1$-ADI scheme (\ref{num-scheme-4}) for Example \ref{ex2} with $\gamma=3/2$.}\label{table1}
\begin{tabular*}  {\temptablewidth}{@{\extracolsep{\fill}}lcccccc}\hline    $N$ &\multicolumn{2}{c}{$\alpha=0.4$} &\multicolumn{2}{c}{$\alpha=0.6$} &\multicolumn{2}{c}{$\alpha=0.8$}    \\
\cline{2-3}\cline{4-5}\cline{6-7}
&$E(M,N)$    &Rate     &$E(M,N)$    &Rate    &$E(M,N)$  &Rate   \\\hline
10  &7.8177e-05    &-       &6.1966e-06   &-        &6.0220e-06  &-  \\
20 &3.9828e-05    &0.9730  &1.5117e-06   &2.0353   &1.4530e-06  &2.0512    \\
40 &1.9805e-05    &1.0079  &3.8334e-07   &1.9795   &3.6489e-07  &1.9935  \\
80 &9.4982e-06    &1.0602  &9.7634e-08   &1.9731   &9.2101e-08  &1.9862    \\\hline
Optimal Order  &\multicolumn{6}{c}{} \\ \hline
\end{tabular*}
\end{center}
\end{table}
\fi

\newpage
\section{Conclusions}\label{conclusion}
In summary, this study develops a symmetric fractional-order reduction technique for constructing efficient numerical solvers for fractional sub-diffusion equations with time delay. When coupled with the $L1$ discretization on nonuniform temporal meshes, the resulting schemes achieve a temporal convergence rate of $2-\alpha/2$, which is notably higher than the conventional $2-\alpha$ order under comparable regularity assumptions. To further enhance computational scalability in two-dimensional case, we incorporate a weighted ADI approach. The spatial compact scheme combined with the ADI method is also discussed. What's more,  we introduce a new framework for analyzing the error of $L1$-type discretization on graded temporal meshes, which relaxes the mesh grading requirement and still guarantees optimal convergence rate at continuous time points. Extensive numerical tests validate the expected convergence behavior and highlight the superior computational efficiency of the proposed algorithms relative to existing approaches.

Overall, the SFOR-based framework offers a theoretically rigorous, numerically robust, and practically efficient methodology for simulating delayed fractional diffusion problems with multiple singularities. Future work will extend the SFOR framework to variable-order fractional delayed models and distributed-delay settings; additionally, a rigorous convergence analysis of SFOR for higher-dimensional problems under relaxed regularity on the solution remains an important open direction.

\section*{Funding}
S. Vong was partially supported by University of Macau (File No. MYRG-GRG2025-00077-FST, MYRG-GRG2024-00100-FST-UMDF). Z. Wang was partially supported by the Natural Science Foundation of Guangdong Province (No. 2023A1515011504).

\section*{Data Availability}
The data that support the findings of this study are available from the corresponding author upon reasonable request

\section*{Declarations}
On behalf of all authors, the corresponding author states that there is no conflict of interest. No datasets were generated or analyzed during the current study.

\bibliographystyle{plain}

\end{document}